\newcommand{\proj}[2]{#2(#1)}
\newcommand{\quo}[1]{\pi\left(#1\right)}
\newcommand{\quomap}{\pi}
\newcommand{\homo}{\alpha}
\newcommand{\gomo}{\beta}
\newcommand{\ord}{\gamma}
\newcommand{\A}[1]{\mathcal{#1}}
\newcommand{\PA}[1]{\mathfrak{#1}}
\newcommand{\PC}[1]{\mathbf{#1}}
\newcommand{\SN}[1]{\MakeUppercase{#1}}
\newcommand{\SSN}[1]{\mathscr{#1}}
\newcommand{\T}[1]{\mathscr{#1}}
\title[Reduced products of UHF algebras]{Reduced products of UHF algebras under forcing axioms}
\author{Paul McKenney}
\begin{document}

\tikzset{cdg.smallmatrix/.style={matrix of math nodes, row sep=1em,
  column sep=0.8em, text height=1.5ex, text depth=0.25ex}}
\tikzset{cdg.matrix/.style={matrix of math nodes, row sep=3em,
  column sep=2.5em, text height=1.5ex, text depth=0.25ex}}
\tikzset{cdg.widematrix/.style={matrix of math nodes, row sep=3em,
  column sep=5em, text height=1.5ex, text depth=0.25ex}}
\tikzset{cdg.path/.style={->, font=\scriptsize}}
\tikzset{cut/.style={fill=white, inner sep=2pt}}

\begin{abstract}

  Let $\A{A}_n$ and $\A{B}_n$ ($n\in\NN$) be separable, unital UHF algebras.  We prove, assuming Todor\v cevi\'c's Axiom and Martin's Axiom, that every isomorphism of the form
  \[
    \prod \A{A}_n / \bigoplus \A{A}_n \simeq \prod \A{B}_n / \bigoplus \A{B}_n
  \]
  is \emph{definable} in a strong sense.  This confirms a conjecture of Coskey and Farah for this class of corona algebras.  We show moreover that the restriction of such an isomorphism to a C*-subalgebra of the form
  \[
    \prod \A{F}_n / \bigoplus \A{F}_n
  \]
  where each $\A{F}_n\subseteq\A{A}_n$ is finite-dimensional, must be determined by a sequence of $*$-homomorphisms $\A{F}_n\to \A{B}_{e(n)}$, where $e : \NN\to\NN$ is a function independent of the choice of the sequence $\A{F}_n$.   We prove as a corollary that $\prod\A{A}_n / \bigoplus \A{A}_n \simeq \prod\A{B}_n / \bigoplus \A{B}_n$ if and only if there is a function $e : \NN\to\NN$ such that for all but finitely many $n\in\NN$, $\A{A}_n \simeq \A{B}_{e(n)}$.

\end{abstract}

\maketitle

\section{Introduction}
\label{sec:intro}

Let $\A{B}$ be a C*-algebra and $\A{I}$ a (norm-closed, two-sided, $*$-closed) ideal in $\A{B}$; $\A{I}$ is called an \emph{essential} ideal in $\A{B}$ if for any other ideal $\A{J}$ in $\A{B}$, $\A{I}\cap\A{J} = 0$ implies $\A{J} = 0$.  Given a C*-algebra $\A{A}$, the \emph{multiplier algebra} $\A{M}(\A{A})$ is defined, up to isomorphism, to be the maximal unital C*-algebra containing $\A{A}$ as an essential ideal.  The \emph{corona algebra} of $\A{A}$ is the quotient $\A{Q}(\A{A}) = \A{M}(\A{A}) / \A{A}$. 

Corona algebras take their name from the following special case.  Let $X$ be a locally compact Hausdorff space and consider $C_0(X)$, the C*-algebra of continuous functions $X\to\CC$ which vanish at infinity.  Then $\A{M}(C_0(X))$ is isomorphic to the C*-algebra $C(\beta X)$, and $\A{Q}(C_0(X))$ is isomorphic to $C(\beta X\sm X)$, where $\beta X$ denotes the \v Cech-Stone compactification of $X$.  Spaces of the form $\beta X\sm X$ are themselves called \emph{corona spaces} and make up a central object of study in set-theoretic topology.  Corona algebras are abundant in the noncommutative setting as well; for instance, consider the C*-algebra $\A{K}(H)$ of compact operators on a Hilbert space $H$.  $\A{M}(\A{K}(H))$ is isomorphic to $\B(H)$, the C*-algebra of bounded operators on $H$, and $\A{Q}(\A{K}(H))$ is thus the \emph{Calkin algebra} over $H$, $\B(H)/\K(H)$.  Yet another example is given by the quotient $\prod \A{A}_n / \bigoplus \A{A}_n$, where each $\A{A}_n$ ($n\in\NN$) is a unital C*-algebra.  Here $\prod \A{A}_n$ denotes the C*-algebra of norm-bounded sequences , and $\bigoplus \A{A}_n$ the C*-algebra of sequences converging to zero.

Corona algebras are important in the theory of C*-algebras due to their connections with a wide array of topics, including projectivity and semiprojectivity of C*-algebras, stability of relations on C*-algebra generators, and the theory of extensions of C*-algebras.  (See for instance~\cite{Loring.LSPP}, \cite{Blackadar.KT}, and~\cite{Blackadar.OA}.)  They also have interesting behavior under certain set-theoretic hypotheses.  Extensive study has been given in particular to their automorphism groups, under the assumption of the Continuum Hypothesis ($\CH$), and, alternately, the Proper Forcing Axiom ($\PFA$).  (See~\cite{Rudin},~\cite{Shelah-Steprans.PFAA},~\cite{Farah.AQ},~\cite{Velickovic.OCAA},~\cite{Farah-McKenney.ZD} for the commutative case; and~\cite{Phillips-Weaver},~\cite{Farah.CO},~\cite{Farah.AC}, and~\cite{Coskey-Farah} for the noncommutative case.)  Typically, $\CH$ implies that there are many automorphisms, whereas $\PFA$ implies that the only automorphisms are those present in any model of $\ZFC$.  For example, $\CH$ implies that there are $2^{2^{\aleph_0}}$-many automorphisms of both $C(\beta\NN\sm\NN)$ and the Calkin algebra.  (See~\cite{Rudin} and~\cite{Phillips-Weaver}, respectively).  On the other hand, $\PFA$ implies that every automorphism of $C(\beta\NN\sm\NN)$ is induced by a function $e : \NN\to\NN$, and every automorphism of the Calkin algebra is inner.  In~\cite{Coskey-Farah}, Coskey and Farah considered the automorphisms of a general corona algebra, and found a notion of triviality which, in the cases of $C(\beta\NN\sm\NN)$ and the Calkin algebra $\A{Q}(\A{K}(H))$, turns out to hold exactly for those automorphisms described above.  Before we state their definition, recall that the \emph{strict topology} on a multiplier algebra $\A{M}(\A{A})$ is the weakest topology making the following seminorms continuous;
\[
  m\mapsto \norm{ma} + \norm{am} \qquad (m\in \A{M}(\A{A}),\, a\in\A{A})
\]
\begin{defn}
A $*$-homomorphism $\vp : \A{Q}(\A{A})\to\A{Q}(\A{B})$ is called \emph{trivial} if its \emph{graph},
\[
  \Gr{\vp} = \set{(a,b)\in \A{M}(\A{A})_1 \times \A{M}(\A{B})_1}{\vp(a + \A{A}) = b + \A{B}}
\]
is a Borel subset of $\A{M}(\A{A})_1\times \A{M}(\A{B})_1$ when each factor is endowed with the strict topology.
\end{defn}
(We emphasize that the graph of a $*$-homomorphism $\vp : \A{Q}(\A{A})\to\A{Q}(\A{B})$ is not the graph of a function in the usual sense, but the result of pulling this set back through the quotient maps $\A{M}(\A{A})\to\A{Q}(\A{A})$ and $\A{M}(\A{B})\to\A{Q}(\A{B})$.)  $\A{M}(\A{A})_1$ here refers to the unit ball of $\A{M}(\A{A})$.  The following conjectures made in~\cite{Coskey-Farah} extend all currently known results on automorphisms of corona algebras;
\begin{conj}
  \label{conj.CH}
  The Continuum Hypothesis implies that the corona of any separable, nonunital C*-algebra must have a nontrivial automorphism.
\end{conj}
\begin{conj}
  \label{conj.FA}
  Forcing axioms imply that every automorphism of the corona of a separable, nonunital C*-algebra must be trivial.
\end{conj}

(See also~\cite{Farah.AQ},~\cite{Farah.RC},~\cite{Just.EU} and~\cite{Farah-Shelah.TA} for work on the analogous conjectures for quotients of $\SSN{P}(\NN)$ by analytic P-ideals.)  In~\cite{Coskey-Farah}, Coskey and Farah prove Conjecture~\ref{conj.CH} for a large class of C*-algebras, including simple and stable C*-algebras.  In this paper, however, we will mainly be concerned with Conjecture~\ref{conj.FA}.  In place of the vague term ``forcing axioms'' we will use two combinatorial consequences of the Proper Forcing Axiom, \emph{Todor\v cevi\'c's Axiom} and \emph{Martin's Axiom}, which we will abbreviate as $\TA$ and $\MA$ respectively.  $\TA$ is also well-known as the \emph{Open Coloring Axiom}, $\OCA$, and can be viewed as a Ramsey-theoretic dichotomy for graphs on a set of real numbers; $\MA$ is the prototypical forcing axiom for ccc posets.  These principles have no large-cardinal strength relative to $\ZFC$ and can be forced over any model of set theory.  The reader is referred to~\cite{Todorcevic.PPIT},~\cite{Moore.PFA} for more information on their use in set theory and other areas of mathematics.

The main result of this paper is a confirmation of Conjecture~\ref{conj.FA} for a certain class of corona algebras, assuming $\TA + \MA$.  Before the statement we again need a definition.  A (separable, unital) \emph{UHF algebra} is a C*-algebra which can be realized as a direct limit of full matrix algebras over $\CC$, with unital connecting maps.

\begin{thm}
  \label{main.borel}
  Assume $\TA + \MA$, and let $\A{A}_n$ and $\A{B}_n$ ($n\in\NN$) be sequences of separable, unital UHF algebras.  If $\vp$ is an isomorphism of the form
  \[
    \prod \A{A}_n / \bigoplus \A{A}_n \simeq \prod \A{B}_n / \bigoplus \A{B}_n
  \]
  then $\Gamma_\vp$ is Borel.
\end{thm}

In proving Theorem~\ref{main.borel} it will be necessary to consider some stronger forms of triviality for $*$-homomorphisms between corona algebras.  We say that a map $\homo : \A{M}(\A{A})\to \A{M}(\A{B})$ is a \emph{lift} of $\vp : \A{Q}(\A{A})\to\A{Q}(\A{B})$ if the following diagram commutes;
\[
  \begin{tikzpicture}
    \matrix (m) [cdg.matrix] {
       \A{M}(\A{A}) &  \A{M}(\A{B}) \\
      \A{Q}(\A{A}) & \A{Q}(\A{B}) \\
    };
    \path [cdg.path]
      (m-1-1) edge node[auto]{$\homo$} (m-1-2)
      (m-2-1) edge node[auto]{$\vp$} (m-2-2)
      (m-1-1) edge (m-2-1)
      (m-1-2) edge (m-2-2);
  \end{tikzpicture}
\]
We call $\homo$ an \emph{algebraic lift} of $\vp$ if $\homo$ is a $*$-homomorphism, and \emph{strict} if $\homo$ is continuous with respect to the strict topologies.  Note that if $\vp$ has a strict lift then $\Gamma_\vp$ is necessarily Borel.  This conclusion does not always hold for $*$-homomorphisms with algebraic lifts, however; see Section~\ref{subsec:topologies} for further details.  In the process of proving Theorem~\ref{main.borel} we demonstrate the following;
\begin{thm}
  \label{main.cfh}
    Assume $\TA + \MA$.  Let $\A{A}_n$ and $\A{B}_n$ ($n\in\NN$) be separable, unital UHF algebras, and let $\vp$ be an isomorphism of the form
    \[
      \prod \A{A}_n / \bigoplus \A{A}_n \to \prod \A{B}_n / \bigoplus \A{B}_n
    \]
    Then for every sequence $\A{F}_n\subseteq\A{A}_n$ ($n\in\NN$) of finite-dimensional, unital C*-subalgebras, the restriction of $\vp$ to the C*-subalgebra $\prod\A{F}_n / \bigoplus \A{F}_n$ has a strict algebraic lift.
\end{thm}
Theorem~\ref{main.cfh} allows us to code a given isomorphism by what we call a \emph{coherent family of $*$-homomorphisms}.  Coherent families appear in various forms throughout the set-theoretic literature; see, for instance, \cite{Farah.AQ}, \cite{Farah.CO}, \cite{Todorcevic.PID}, and~\cite{Dow-Simon-Vaughan}.  The proof of Theorem~\ref{main.borel} is then completed with the following;
\begin{thm}
  \label{main.cfh->borel}
  Assume $\TA$.  Let $\A{A}_n$ ($n\in\NN$) be separable, unital UHF algebras, and let $\A{B}$ be any separable C*-algebra.  Suppose $\vp$ is a $*$-homomorphism of the form
  \[
    \prod \A{A}_n / \bigoplus \A{A}_n \to \A{M}(\A{B}) / \A{B}
  \]
  and there is a strict, algebraic lift of each restriction of $\vp$ to a unital C*-subalgebra of the form $\prod \A{F}_n / \bigoplus \A{F}_n$, with each $\A{F}_n \subseteq \A{A}_n$ finite-dimensional.  Then $\Gamma_\vp$ is Borel.
\end{thm}

It is interesting to note that Theorem~\ref{main.cfh} already gives us a form of rigidity for this class of corona algebras;
\begin{cor}
  \label{main.cor}
  Assume $\TA + \MA$.  Let $\A{A}_n$ and $\A{B}_n$ ($n\in\NN$) be separable, unital UHF algebras, and suppose
  \[
    \prod \A{A}_n / \bigoplus \A{A}_n \simeq \prod \A{B}_n / \bigoplus \A{B}_n
  \]
  Then there are cofinite sets $\SN{A},\SN{B}\subseteq\NN$ and a bijection $e : \SN{A}\to\SN{B}$ such that for all $n\in\SN{A}$, $\A{A}_n$ and $\A{B}_{e(n)}$ are isomorphic.
\end{cor}
Corollary~\ref{main.cor} essentially reduces the study of isomorphisms in our class of corona algebras to a study of automorphisms.  Unfortunately, we are not able to provide a strict, algebraic lift for a given automorphism of a corona algebra in our class, like we are able for its restrictions (assuming $\TA + \MA$).  The obstruction seems to be entirely C*-algebraic; in fact, we demonstrate in Section~\ref{sec:coherent-families} that the statement ``all trivial automorphisms of $\prod \A{A}_n / \bigoplus \A{A}_n$ have strict algebraic lifts'' is equivalent to an asymptotic form of an intertwining property.  (See~\cite{Elliott.intertwining} for more on Elliott's intertwining theorem.)  The proof goes through Theorem~\ref{main.cfh->borel} and an appeal to Schoenfield's absoluteness theorem.

The paper is structured as follows.  In Section~\ref{sec:preliminaries} we review some of the background needed for the proofs of the above results, including some standard tools from both combinatorial and descriptive set theory.  In particular we introduce the assumptions for Theorem~\ref{main.cfh}, $\TA$ and $\MA$.  We also provide a stratification of $\prod \A{A}_n / \bigoplus \A{A}_n$ into algebras of the form $\prod \A{F}_n / \bigoplus \A{F}_n$, where each $\A{F}_n$ is finite-dimensional; a similar stratification of the Calkin algebra underlies the proof of the main result of~\cite{Farah.CO} (see also~\cite[\S{4}]{Coskey-Farah}, and~\cite[Theorem~3.1]{Elliott.derivations-II}).  This allows us to introduce coherent families of $*$-homomorphisms; we then prove Corollary~\ref{main.cor}.  In Section~\ref{sec:definable-embeddings} we prove the following, in $\ZFC$.  Let $\vp$ be an injective, $*$-homomorphism of the form
\[
  \prod \A{F}_n / \bigoplus \A{F}_n\to \A{M}(\A{A}) / \A{A}
\]
where $\A{A}$ is an AF algebra (i.e. a direct limit of finite-dimensional C*-algebras), and each $\A{F}_n$ is finite-dimensional.  Suppose further that $\Gamma_\vp$ is Borel; then $\vp$ must in fact have a strict algebraic lift.  In Section~\ref{sec:fa-embeddings} we prove Theorem~\ref{main.cfh} by showing that the restriction of a given isomorphism to a subalgebra of the form $\prod\A{F}_n / \bigoplus \A{F}_n$ must have a Borel graph.  The work of this section is derived from arguments in~\cite{Velickovic.OCAA} and~\cite{Farah.AQ}.  In Section~\ref{sec:coherent-families} we prove Theorem~\ref{main.cfh->borel}, and discuss the question of whether all trivial automorphisms of a given corona algebra $\prod \A{A}_n / \bigoplus \A{A}_n$ have strict, algebraic lifts.

\section{Preliminaries}
\label{sec:preliminaries}

\subsection{Set theory}
\label{subsec:set theory}

We will assume that the reader is familiar with the basics of modern set theory as outlined in, for instance,~\cite{Kunen.2011}.  Our notation will for the most part follow the standards of the literature in set theory; in particular, we identify $\NN$ with the first infinite ordinal $\omega$, and $n\in\NN$ with $\{0,\ldots,n-1\}$.  When $f$ is a function and $X$ a set we write $f[X]$ for the image of $X$ under $f$.  We will also write $[X]^k$ for the set of $k$-element subsets of $X$, and $[X]^{<\omega}$ for the set of finite subsets of $X$.  We will often be concerned with the ordering of \emph{eventual dominance} on $\NN^\NN$;
\[
  f <^* g \iff \exists m\; \forall n\ge m\quad f(n) < g(n)
\]
A simple diagonalization argument shows that $\NN^\NN$ is countably directed in $<^*$.  It follows that if $X\subseteq\NN^\NN$ is cofinal in $<^*$ and is written as a countable union $X = \bigcup X_n$, then there is some $n$ for which $X_n$ is also cofinal in $<^*$.  We also note that any $X\subseteq\NN^\NN$ which is cofinal in $<^*$ must be cofinal in $<^m$ for some $m\in\NN$, where
\[
  f <^m g \iff \forall n\ge m\quad f(n) < g(n)
\]
Similar facts hold for $\SSN{P}(\NN)$ under the ordering of \emph{almost-inclusion};
\[
  \SN{A}\subseteq^* \SN{B} \iff |\SN{A}\sm\SN{B}| < \aleph_0
\]
We will often use these facts without explicit reference.

Our use of forcing axioms will be limited to two of their combinatorial consequences, \emph{Todor\v cevi\'c's Axiom} ($\TA$) and \emph{Martin's Axiom} ($\MA$).  $\TA$ and $\MA$ follow from $\PFA$, but have no large cardinal strength relative to $\ZFC$ since they can be forced together over any model of $\ZFC$ (\cite{Todorcevic.PPIT}).  $\TA$ is also well known as the \emph{Open Coloring Axiom} ($\OCA$).  Our choice of the name $\TA$ stems from the fact that other, different axioms were introduced in~\cite{ARS}, also under the name $\OCA$.  $\TA$ states;
\begin{quote}
  Let $X$ be a separable metric space, and let $[X]^2 = K_0\cup K_1$ be a partition.  Suppose $K_0$ is open, when identified with a symmetric subset of $X\times X$ minus the diagonal.  Then either
  \begin{itemize}
    \item  there is an uncountable $H\subseteq X$ such that $[H]^2\subseteq K_0$ ($H$ is \emph{$K_0$-homogeneous}), or
    \item  $X$ can be written as a countable union of sets $H_n$ ($n\in\NN$) with $[H_n]^2\subseteq K_1$ ($X$ is \emph{$\sigma$-$K_1$-homogeneous}).
  \end{itemize}
\end{quote}
$\TA$ has a remarkable influence on the set theory of the real line; for instance, it implies that the least size $\bb$ of an unbounded subset of $(\NN^\NN, <^*)$ is exactly $\aleph_2$ (\cite{Todorcevic.PPIT}).  We will occasionally make use of this fact.  $\MA$ states;
\begin{quote}
  Let $\PP$ be a poset with the countable chain condition, and let $\SSN{D}$ be a collection of $\aleph_1$-many dense subsets of $\PP$.  Then there is a filter $G\subseteq\PP$ which meets every set in $\SSN{D}$.
\end{quote}
This notation diverges from the more standard refinement, in which $\MA_\kappa$ is written for the analogous statement with $\kappa$ replacing $\aleph_1$, and $\MA$ stands for ``for all $\kappa < 2^{\aleph_0}$, $\MA_\kappa$ holds.''  However, we will be working in models of $\TA$, where $\bb = \aleph_2$; since $\MA_\kappa$ implies $\bb > \kappa$, it follows that $\MA_{\aleph_1}$ is the strongest fragment of $\MA$ which is consistent with $\TA$, hence we will shorten it to just $\MA$.  

We will make frequent use of the classical results of descriptive set theory, concerning definability properties of subsets of Polish spaces.  The interested reader may consult~\cite{Kechris.CDST} for proofs and more information.  For now we simply quote our most-used results.

\begin{fact}(Jankov-von Neumann, see~\cite[Theorem~18.1]{Kechris.CDST})
  Let $X$ and $Y$ be Polish spaces, and let $A\subseteq X\times Y$ be an analytic set.  Then there is a function $f$ with domain that of $A$, such that $f$ is measurable with respect to the $\sigma$-algebra generated by the analytic subsets of $X$.
\end{fact}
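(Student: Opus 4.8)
This is the classical Jankov-von Neumann uniformization theorem, which I would reprove by the standard leftmost-branch construction. Write $\rho : X\times Y\to X$ for the coordinate projection, so the task is to produce a function $f$ with domain $\rho[A]$ satisfying $(x,f(x))\in A$ for all $x\in\rho[A]$ and measurable with respect to the $\sigma$-algebra $\mathcal{S}$ generated by the analytic subsets of $X$. If $A=\emptyset$ there is nothing to do, so assume $A\ne\emptyset$; since $A$ is analytic there is a continuous surjection $g : \NN^\NN\to A$. Put $h=\rho\circ g$, a continuous map $\NN^\NN\to X$ with image $\rho[A]$. In particular $\rho[A]$ is analytic, and for every $x\in\rho[A]$ the fibre $h^{-1}(\{x\})$ is a nonempty closed subset of $\NN^\NN$.

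For each $x\in\rho[A]$ I would let $z_x$ be the leftmost branch of $h^{-1}(\{x\})$: having chosen $z_x(0),\dots,z_x(n-1)$, let $z_x(n)$ be the least $k\in\NN$ for which $h^{-1}(\{x\})$ contains a point whose first $n+1$ coordinates are $z_x(0),\dots,z_x(n-1),k$. Since $h^{-1}(\{x\})$ is nonempty and closed, this recursion never stalls and $z_x\in h^{-1}(\{x\})$. Defining $f(x)$ to be the second coordinate of $g(z_x)$, we get $g(z_x)=(h(z_x),f(x))=(x,f(x))\in A$, so $f$ uniformizes $A$ and has the required domain.

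For the measurability it suffices to check that $f^{-1}(V)\in\mathcal{S}$ for $V$ ranging over a countable basis of $Y$. For a finite sequence $s$, let $N_s$ be the basic clopen subset of $\NN^\NN$ consisting of the extensions of $s$, and put $E_s=h[N_s]$; then $E_s$ is analytic, and $x\in E_s$ holds precisely when some point of $h^{-1}(\{x\})$ extends $s$. Let $D_s=\set{x\in\rho[A]}{z_x\text{ extends }s}$. An induction on the length of $s$ shows each $D_s$ lies in $\mathcal{S}$: indeed $D_{\langle\rangle}=\rho[A]$ is analytic, while if $s=t{}^\frown j$ then, straight from the definition of $z_x$,
\[
  D_{t{}^\frown j}=D_t\cap E_{t{}^\frown j}\cap\bigcap_{k<j}\bigl(\rho[A]\sm E_{t{}^\frown k}\bigr),
\]
a finite Boolean combination of analytic sets. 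Finally, since $g$ is continuous, $g^{-1}(X\times V)$ is open in $\NN^\NN$, hence equals $\bigcup_{s\in S}N_s$ for some countable $S$; and since $f(x)\in V$ if and only if $z_x\in g^{-1}(X\times V)$, i.e.\ $z_x$ extends some $s\in S$, we obtain $f^{-1}(V)=\bigcup_{s\in S}D_s\in\mathcal{S}$.

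The only delicate point is the identity for $D_{t{}^\frown j}$: one has to verify that the leftmost branch of $h^{-1}(\{x\})$ extends $t{}^\frown j$ exactly when it extends $t$, the fibre $h^{-1}(\{x\})$ meets $N_{t{}^\frown j}$, and the fibre misses $N_{t{}^\frown k}$ for every $k<j$ --- together with the observation that ``$h^{-1}(\{x\})$ meets $N_r$'' is the analytic condition $x\in E_r$. Granting this, the remaining ingredients --- the existence of leftmost branches in nonempty closed subsets of $\NN^\NN$, and the routine manipulation of the countable clopen basis --- present no difficulty, so I would not expect any serious obstacle beyond that verification.
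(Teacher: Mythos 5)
The paper states this as a background fact, citing \cite[Theorem~18.1]{Kechris.CDST}, and supplies no proof of its own, so there is no argument in the paper to compare against. Your proof is the standard leftmost-branch construction---essentially the one in the cited source---and it is correct. The one point you flag for verification checks out: if $z_x$ extends $t$, then the definition of the leftmost branch says $z_x(|t|)=j$ precisely when $j$ is the least $k$ with $h^{-1}(\{x\})\cap N_{t{}^\frown k}\ne\emptyset$, i.e.\ $x\in E_{t{}^\frown j}$ and $x\notin E_{t{}^\frown k}$ for all $k<j$; this gives the displayed identity for $D_{t{}^\frown j}$, and since each $E_s$ is analytic and $D_{\langle\rangle}=\rho[A]$ is analytic, an induction on $|s|$ shows every $D_s$ is a finite Boolean combination of analytic subsets of $X$. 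The final step, $f^{-1}(V)=\bigcup_{s\in S}D_s$ with $S$ countable, then places $f^{-1}(V)$ in the $\sigma$-algebra generated by the analytic subsets of $X$, as required.
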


\begin{fact}(\cite[\S{29}]{Kechris.CDST})
  Let $X$ be a Polish space and $A\subseteq X$ an analytic set.  Then $A$ is measurable with respect to any complete Borel probability measure.  Moreover, $A$ has the Baire Property.
\end{fact}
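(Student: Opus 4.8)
The statement is classical, and in the body we use only its conclusion; here is the route I would take to prove it.  The plan is to reduce it to two closure facts.  First, recall that an analytic $A\subseteq X$ can be written as $A=p_X[F]$ for some closed $F\subseteq X\times\omega^\omega$, where $p_X$ is the projection to $X$.  Putting $[s]=\set{\sigma\in\omega^\omega}{s\subseteq\sigma}$ and $F_s=\overline{p_X[F\cap(X\times[s])]}$ for $s\in\omega^{<\omega}$, one checks directly that $(F_s)$ is a Souslin scheme of closed sets whose Souslin operation is exactly $A$: that is, $A=\bigcup_{\sigma\in\omega^\omega}\bigcap_{n\in\omega}F_{\sigma|n}$, writing $\sigma|n$ for the length-$n$ initial segment of $\sigma$.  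Since closed sets are trivially universally measurable and have the Baire property, it is enough to show that the universally measurable subsets of $X$, and the subsets of $X$ with the Baire property, each form a $\sigma$-algebra closed under the Souslin operation.

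For the measurability half I would invoke Choquet's capacitability theorem.  A complete Borel probability measure on $X$ is defined on a $\sigma$-algebra containing the Lebesgue completion of its restriction $\mu$ to the Borel sets, so it suffices to show that $A$ is measurable for that completion, where now $\mu$ is a finite Borel measure on the Polish space $X$.  Such a $\mu$ is tight (Ulam), hence $E\mapsto\mu^*(E)$ is a Choquet capacity, and capacitability of the analytic set $A$ yields $\mu^*(A)=\sup\set{\mu(K)}{K\subseteq A\text{ compact}}\le\mu_*(A)\le\mu^*(A)$, whence $\mu^*(A)=\mu_*(A)$ and $A$ is $\mu$-measurable.  Alternatively one can prove closure under the Souslin operation directly, by a recursion on $\omega^{<\omega}$: given a monotone Souslin scheme $(B_s)$ of measurable sets with result $B=\bigcup_\sigma\bigcap_n B_{\sigma|n}$, put $B^*_s=\bigcup\set{\bigcap_n B_{\sigma|n}}{s\subseteq\sigma}$ (so that $B=B^*_\emptyset$, $B^*_s=\bigcup_k B^*_{s^\frown k}$, and $B^*_s\subseteq B_s$), fix an enumeration of $\omega^{<\omega}$, recursively choose nested measurable covers $U_s\supseteq B^*_s$ of least measure with $U_{s^\frown k}\subseteq U_s\cap B_{s^\frown k}$, and choose $m_s\in\omega$ with $\mu(U_s\setminus\bigcup_{k\le m_s}U_{s^\frown k})<\varepsilon 2^{-i-1}$ when $s$ is the $i$th member of the enumeration; a K\"onig's lemma argument then shows that $C=\bigcap_n\bigcup\set{U_s}{|s|=n\text{ and }s(j)\le m_{s|j}\text{ for all }j<n}$ is a measurable subset of $B$ with $\mu(U_\emptyset\setminus C)<\varepsilon$, so that letting $\varepsilon\to 0$ sandwiches $B$ between measurable sets of equal measure.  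For the Baire-property half I would use Nikodym's theorem, that the sets with the Baire property are closed under the Souslin operation --- equivalently, the determinacy of the unfolded Banach--Mazur game on $A=p_X[F]$, or a recursion analogous to the above but bookkept with the $\sigma$-ideal of meager sets in place of the null sets.

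The main obstacle, in either route, is the corresponding classical hard theorem: Choquet's capacitability theorem (together with Ulam tightness) for the measurability half, and Nikodym's theorem --- equivalently the determinacy of the unfolded game --- for the Baire-property half.  In the recursion form, the delicate point is the simultaneous choice of the truncation levels $m_s$ over all of $\omega^{<\omega}$ so that the accumulated error stays bounded, together with the K\"onig's lemma argument that places $C$ inside $B$.  Everything else --- the representation of an analytic set as a Souslin operation applied to closed sets, and the triviality that closed sets are universally measurable and have the Baire property --- is routine.  Since we use only the conclusion, we simply cite it, referring to~\cite[\S{29}]{Kechris.CDST} for the details.
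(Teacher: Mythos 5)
The paper does not actually prove this Fact; it is stated as a black-box citation to Kechris \S{29}, which is also where you land. Your sketch is nonetheless a correct outline of the standard argument: the representation of $A$ as the Souslin operation applied to the regular (monotone) scheme $F_s=\overline{p_X[F\cap(X\times[s])]}$ is right (the key point being that if $x\in\bigcap_n F_{\sigma|n}$ one can choose witnesses $(y_n,\tau_n)\in F$ with $\tau_n\supseteq\sigma|n$ and $y_n\to x$, so $\tau_n\to\sigma$ and closedness of $F$ gives $(x,\sigma)\in F$), the reduction to closure of the universally measurable and Baire-property $\sigma$-algebras under the Souslin operation is the standard one, and both the capacitability route and the Luzin--Sierpi\'nski recursion with measurable hulls and truncation levels $m_s$ are correctly described, as is the meager-ideal analogue (Nikodym) for the Baire property.
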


\begin{fact}(\cite[Theorem~8.38]{Kechris.CDST})
  A Baire-measurable function between Polish spaces is continuous on a dense $G_\delta$.
\end{fact}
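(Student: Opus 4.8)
The plan is to argue directly from the definition of Baire-measurability: a map $f : X \to Y$ is Baire-measurable exactly when $f^{-1}(V)$ has the Baire property for every open $V \subseteq Y$. First I would fix a countable basis $\{V_n : n\in\NN\}$ for the topology of $Y$. For each $n$ we may write $f^{-1}(V_n) = U_n \triangle M_n$, where $U_n\subseteq X$ is open and $M_n\subseteq X$ is meager, since $f^{-1}(V_n)$ has the Baire property. Put $M = \bigcup_n M_n$; this set is meager, hence contained in a countable union $\bigcup_n F_n$ of closed nowhere dense sets, and so $G := X\sm\bigcup_n F_n$ is a dense $G_\delta$ subset of $X$ (by the Baire category theorem in the completely metrizable space $X$) which is disjoint from $M$, and in particular from each $M_n$.

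Next I would show that the restriction $g$ of $f$ to $G$ is continuous. As the sets $V_n$ form a basis for $Y$, it is enough to check that $g^{-1}(V_n)$ is relatively open in $G$ for each $n$. But
\[
  g^{-1}(V_n) = f^{-1}(V_n)\cap G = (U_n\triangle M_n)\cap G = U_n\cap G,
\]
where the final equality uses $M_n\cap G = \emptyset$: writing $U_n\triangle M_n = (U_n\sm M_n)\cup(M_n\sm U_n)$ and intersecting with $G$, the second summand vanishes entirely and nothing is removed from the first. Since $U_n\cap G$ is relatively open in $G$, this shows $g$ is continuous, which completes the proof.

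There is no serious obstacle here; the one point worth highlighting is that passing to a \emph{single} comeager set $G$ simultaneously erases the meager discrepancy between $f^{-1}(V_n)$ and the open set $U_n$ for every basic open $V_n$ at once — this is precisely the mechanism that upgrades the global Baire-measurability hypothesis to genuine continuity on $G$. The only external inputs are the routine facts that a comeager subset of a Polish space contains a dense $G_\delta$ (used to produce $G$) and that checking openness of preimages of basic open sets suffices for continuity.
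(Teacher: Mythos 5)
Your proof is correct; the paper itself gives no argument for this fact, citing it as \cite[Theorem~8.38]{Kechris.CDST}, and what you have written is precisely the standard proof of that theorem. The key step — intersecting with a single dense $G_\delta$ that kills all the meager discrepancies $M_n$ at once, so that $g^{-1}(V_n) = U_n \cap G$ is relatively open — is exactly right, and no gaps remain.
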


\subsection{Multipliers, topologies and lifts}
\label{subsec:topologies}

In Section~\ref{sec:intro} we defined the multiplier algebra, up to isomorphism, by a maximality property.  The multiplier algebra, like the \v Cech-Stone compactification, has many explicit realizations.  For instance, a concrete representation of $\A{M}(\A{A})$ comes with any nondegenerate representation $\rho$ of $\A{A}$ on a Hilbert space $H$, as the idealizer of $\rho[\A{A}]$ inside $\B(H)$;
\[
  \A{M}(\A{A}) \simeq \set{m\in\B(H)}{m\rho[\A{A}] + \rho[\A{A}]m\subseteq\rho[\A{A}]}
\]
It is well-known that the isomorphism type of this representation of the multiplier algebra does not depend on $\rho$.  Alternatively, one can take $\A{M}(\A{A})$ to be the idealizer of $\A{A}$ inside $\A{A}^{**}$, the enveloping von Neumann algebra of $\A{A}$.  Other, more abstract approaches go via Hilbert C*-modules and double centralizers (see~\cite{Lance.HM} and~\cite{Pedersen.CAAG}, respectively, for an excellent treatment of each).

The \emph{strict topology} on $\A{M}(\A{A})$ is that generated by the seminorms
\[
  m\mapsto \norm{ma} + \norm{am} \qquad (m\in \A{M}(\A{A}), a\in \A{A})
\]
$\A{M}(\A{A})$ is the strict completion of $\A{A}$ inside its enveloping von Neumann algebra $\A{A}^{**}$, and the strict topology coincides with the norm topology when restricted to $\A{A}$.  Hence if $\A{A}$ is separable then $\A{M}(\A{A})$ is also separable, in the strict topology.  The unit ball of $\A{M}(\A{A})$, when endowed with the strict topology, forms a Polish space.  In the case of $\A{A} = \A{K}(H)$, where $\A{M}(\A{A}) = \B(H)$, the strict topology is exactly the $\sigma$-strong-$*$ topology, and when restricted to norm-bounded subsets this coincides with both the weak and strong operator topologies.  Similarly, the strict topology on $\prod \A{A}_n$, when restricted to norm-bounded subsets, coincides with the product of the norm topologies.

We will always denote the quotient map $\A{M}(\A{A})\to\A{Q}(\A{A})$ by $\quomap$, regardless of the C*-algebra $\A{A}$.  Let $\vp$ be a $*$-homomorphism between corona algebras $\A{Q}(\A{A})$ and $\A{Q}(\A{B})$.  We say that $L$ is an \emph{$\e$-lift} of $\vp$ given that the diagram below commutes, up to a tolerance of $\e$;
\[
  \begin{tikzpicture}
    \matrix (m) [cdg.smallmatrix] {
       \A{M}(\A{A}) &    &  \A{M}(\A{B}) \\
               & \e &         \\
      \A{Q}(\A{A}) &    & \A{Q}(\A{B}) \\
    };
    \path [cdg.path]
      (m-1-1) edge node[auto]{$L$} (m-1-3)
      (m-3-1) edge node[below]{$\vp$} (m-3-3)
      (m-1-1) edge node[left]{$\quomap$} (m-3-1)
      (m-1-3) edge node[auto]{$\quomap$} (m-3-3);
  \end{tikzpicture}
\]
that is, $\norm{\pi(L(x)) - \vp(\pi(x))} \le \e\norm{\pi(x)}$ for all $x\in \A{M}(\A{A})$.  When $\e = 0$ we call $L$ simply a \emph{lift} of $\vp$.  In general we make no assumptions on the algebraic properties of $L$, or its definability; often we will work with a lift of $\vp$ given to us by a choice of representatives.  If $L$ is in fact a $*$-homomorphism we will call $L$ an \emph{algebraic lift}.  We will also often be concerned with lifts which are bicontinuous with respect to the strict topology; such maps we will call \emph{strict}.

Suppose now that $\A{A}_n$ ($n\in\NN$) is a sequence of unital C*-algebras, $\A{B}$ is a C*-algebra, and $\homo : \prod\A{A}_n \to \A{M}(\A{B})$ is a strict $*$-homomorphism taking $\bigoplus\A{A}_n$ into $\A{B}$.  Let $j_n : \A{A}_n\to \bigoplus\A{A}_m$ be the canonical embedding; then the sequence $\homo_n = \homo\circ j_n$ completely determines $\homo$.  In particular, if each $\A{A}_n$ and $\A{B}$ is separable, then we may identify $\homo$ with a member of the separable metric space
\[
  \prod \Hom(\A{A}_n, \A{B})
\]
where $\Hom(\A{A}_n,\A{B})$, the space of $*$-homomorphisms from $\A{A}_n$ to $\A{B}$, is given the point-norm topology.  It will be important to know when a given sequence in the above space determines a strict $*$-homomorphism $\prod\A{A}_n \to \A{M}(\A{B})$, i.e. when the above identification can be reversed.  For this we have the following;
\begin{prop}
  \label{hom.reverse}
  Let a sequence $(\homo_n)\in \prod \Hom(\A{A}_n,\A{B})$ be given.  Suppose that the projections $p_n = \homo_n(1_{\A{A}_n})$ ($n\in\NN$) are pairwise-orthogonal, and that their sums $e_m = \sum_{n\le m} p_n$ converge strictly in $\A{M}(\A{B})$.  Then there is a unique strict $*$-homomorphism
  \[
    \homo : \prod \A{A}_n \to \A{M}(\A{B})
  \]
  such that $\homo\circ j_n = \homo_n$ for each $n$.
\end{prop}
\begin{proof}
  Since the projections $p_n$ are pairwise-orthogonal, we may define a $*$-homomorphism $\homo : \bigoplus \A{A}_n \to \A{B}$ with $\homo\circ j_n = \homo_n$.  The projections $e_m$ are the image of an approximate unit for $\bigoplus \A{A}_n$ under $\homo$.  Thus we are in the situation of~\cite[Proposition~5.8]{Lance.HM}, and the conclusion is immediate.
\end{proof}

\subsection{Reduced products of UHF algebras}

A \emph{UHF algebra} is a C*-algebra $\A{A}$ such that for all $x_1,\ldots,x_n\in\A{A}$ and $\e > 0$, there is a C*-subalgebra of $\A{A}$, isomorphic to a full matrix algebra over $\CC$, with elements $y_1,\ldots,y_n$ satisfying
\[
  \forall i\le n\quad \norm{x_i - y_i} < \e
\]
When a UHF algebra $\A{A}$ is separable, it may be realized as a (C*-)direct limit of full matrix algebras $M_{k_n}(\CC)$.  In case $\A{A}$ is also unital, such a representing sequence may be chosen so that the connecting maps are unital.  We will be concerned exclusively with UHF algebras which are both separable and unital, and will therefore drop these two adjectives in all further discussions with hope that the result will be more readable.  We only note here that other formulations of UHF algebras, while equivalent in the separable case, are often not in the nonseparable case; the interested reader is referred to~\cite{Farah-Katsura.UHFI}.  Since we are so often concerned with the sequence of matrix algebras which makes up a UHF algebra, we set aside a term for it;
\begin{defn}
  Let $\A{A}$ be a UHF algebra.  A sequence $\A{A}_n$ ($n\in\NN$) of C*-subalgebras of $\A{A}$ is called \emph{suitable} if, for all $n\in\NN$, we have
  \begin{itemize}
    \item  $1_{\A{A}} \in \A{A}_n$, 
    \item  $\A{A}_n \subseteq \A{A}_{n+1}$, and
    \item  $\A{A}_n \simeq M_{k_n}(\CC)$ for some $k_n$.
  \end{itemize}
\end{defn}

Now let $\A{A}$ be a UHF algebra with suitable sequence $\A{A}_n \simeq M_{k_n}(\CC)$.  It follows that $k_n \mid k_{n+1}$ for each $n$.  The \emph{supernatural number} associated to $\A{A}$ is the formal product of all primes which eventually divide $k_n$, with (possibly infinite) multiplicity; e.g. when $k_n = 2^n$ the supernatural number associated to the resulting algebra is written $2^\infty$.  A classical theorem of Glimm (\cite{Glimm.UHF}) shows that the supernatural number associated to a UHF algebra is a complete invariant; moreover if $\A{A}$ and $\A{B}$ are UHF algebras with associated supernatural numbers $s$ and $t$, respectively, then $\A{A}$ embeds into $\A{B}$ if and only if $s \mid t$, i.e. every prime in the formal product $s$ appears in $t$, with multiplicity at least that of its copy in $s$.

It is well-known, and easy to prove, that UHF algebras are simple.  Hence, there is never a nonzero $*$-homomorphism $\A{A}\to\A{A}_n$ (unless the sequence $\A{A}_n$ is eventually constant, in which case $\A{A}$ is finite-dimensional).  We can, however, get close;
\begin{defn}
  \label{ce.def}
  Let $\A{A}$ be a C*-algebra and $\A{B}$ a C*-subalgebra.  A map $\theta : \A{A}\to\A{B}$ is called a \emph{conditional expectation} if the following hold;
  \begin{enumerate}
    \item\label{ce.linear}  $\theta$ is linear,
    \item\label{ce.cpc}  $\theta$ is a completely-positive contraction (see~\cite{Blackadar.OA} for a definition),
    \item\label{ce.ext}  $\theta(b) = b$ for all $b\in\A{B}$, and
    \item\label{ce.mod}  $\theta(ba) = b\theta(a)$ and $\theta(ab) = \theta(a)b$ for all $a\in\A{A}$ and $b\in\A{B}$.
  \end{enumerate}
\end{defn}

\begin{fact}
  \label{ce.fact}
  If $\A{A}$ is a UHF algebra with suitable sequence $\A{A}_n$ ($n\in\NN$), then there is a family of conditional expectations $\theta_n : \A{A}\to\A{A}_n$ satisfying $\theta_n \circ \theta_m = \theta_n$ for all $n \le m$.  In particular, we have $\theta_n(a) \to a$ for all $a\in\A{A}$.
\end{fact}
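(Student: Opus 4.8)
The plan is to realise each $\theta_n$ as the \emph{trace-preserving} conditional expectation of $\A{A}$ onto $\A{A}_n$. By Glimm's classification~\cite{Glimm.UHF}, $\A{A}$ carries a unique tracial state $\tau$, and $\tau$ is faithful. Let $\A{A}_n\simeq M_{k_n}(\CC)$ be the given suitable sequence; for each $n$ fix a system of matrix units $(e^{(n)}_{ij})_{i,j<k_n}$ generating $\A{A}_n$, so that $\tau(e^{(n)}_{ii})=1/k_n$ since $1_{\A{A}}\in\A{A}_n$. Define
\[
  \theta_n(a) = k_n \sum_{i,j<k_n} \tau\big(a\, e^{(n)}_{ji}\big)\, e^{(n)}_{ij}
  \qquad (a\in\A{A}).
\]
Because $\A{A}_n$ is finite-dimensional this is a well-defined bounded linear map $\A{A}\to\A{A}_n$, and a short matrix-unit computation shows $\theta_n(a)$ is the unique element of $\A{A}_n$ with
\[
  \tau\big(\theta_n(a)\, x\big) = \tau(ax) \qquad \text{for every } x\in\A{A}_n ,
\]
uniqueness coming from faithfulness of $\tau$; this characterization is what I would actually use.

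Next I would verify the axioms of Definition~\ref{ce.def}. Linearity is immediate. If $b\in\A{A}_n$ then $b$ itself satisfies the characterizing relation, so $\theta_n(b)=b$, which is~(\ref{ce.ext}). For~(\ref{ce.mod}), given $a\in\A{A}$ and $b\in\A{A}_n$ and any $x\in\A{A}_n$ we have $xb\in\A{A}_n$, so $\tau\big(b\,\theta_n(a)\,x\big)=\tau\big(\theta_n(a)\,xb\big)=\tau(a\,xb)=\tau(bax)$, whence $\theta_n(ba)=b\,\theta_n(a)$ by uniqueness; the identity $\theta_n(ab)=\theta_n(a)b$ is symmetric. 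For complete positivity and contractivity~(\ref{ce.cpc}) I would use the tensor structure: for $m\ge n$ the relative commutant $\A{A}_n'\cap\A{A}_m$ inside $\A{A}_m$ is a full matrix algebra and $\A{A}_m\simeq\A{A}_n\otimes(\A{A}_n'\cap\A{A}_m)$, and (by uniqueness of the trace on matrix algebras) $\theta_n$ agrees on $\A{A}_m$ with the slice map $\mathrm{id}\otimes\tau_{n,m}$, where $\tau_{n,m}$ is the tracial state of $\A{A}_n'\cap\A{A}_m$. Slice maps by states are completely positive contractions, these agree for varying $m$, and $\bigcup_m\A{A}_m$ is dense in $\A{A}$, so $\theta_n$ is a completely positive contraction. (Alternatively, once $\norm{\theta_n}\le 1$ is known this way, $\theta_n$ is a norm-one projection of $\A{A}$ onto the C*-subalgebra $\A{A}_n$, and then all of Definition~\ref{ce.def} follows from Tomiyama's theorem, see~\cite{Blackadar.OA}.)

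Finally I would establish the coherence and the convergence. For $n\le m$, $a\in\A{A}$ and $x\in\A{A}_n\subseteq\A{A}_m$ we get $\tau\big(\theta_n(\theta_m(a))\,x\big)=\tau\big(\theta_m(a)\,x\big)=\tau(ax)$, the last step because $x\in\A{A}_m$; by uniqueness $\theta_n\circ\theta_m=\theta_n$. For $\theta_n(a)\to a$: if $a\in\A{A}_m$ then $\theta_n(a)=a$ for all $n\ge m$ by~(\ref{ce.ext}); for general $a\in\A{A}$ and $\varepsilon>0$ choose $a'\in\bigcup_m\A{A}_m$ with $\norm{a-a'}<\varepsilon$, and then for all sufficiently large $n$,
\[
  \norm{\theta_n(a)-a} \le \norm{\theta_n(a-a')} + \norm{\theta_n(a')-a'} + \norm{a'-a} < 2\varepsilon,
\]
using $\norm{\theta_n}\le 1$ and $\theta_n(a')=a'$ eventually; as $\varepsilon$ was arbitrary, $\theta_n(a)\to a$.

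The bulk of this is routine algebra with the tracial state; the one point needing genuine care is~(\ref{ce.cpc}) — justifying complete positivity and contractivity — which is where the slice-map description (or the appeal to Tomiyama) does the work. One could alternatively bypass the trace and build the $\theta_n$ directly from the inductive-limit decomposition $\A{A}=\varinjlim\A{A}_n$, but routing through the trace makes the coherence identity $\theta_n\circ\theta_m=\theta_n$ essentially immediate.
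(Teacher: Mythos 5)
Your proof is correct, but it routes through the unique tracial state rather than the Haar-averaging device the paper uses, so the two arguments are genuinely different even though they construct the same map. The paper's sketch defines $\theta_{n,m}(a) = \int u a u^*\,d\mu(u)$ where $\mu$ is Haar measure on the unitary group of the relative commutant $\A{A}_n'\cap\A{A}_m$; from this formula complete positivity and contractivity are immediate (it is an average of inner $*$-automorphisms), and the work lies in checking that the image lands in $\A{A}_n$ and that the maps form a coherent system, both via commutant/double-commutant reasoning in the finite-dimensional layers. Your approach reverses the burden: by defining $\theta_n$ via the characterizing trace relation $\tau(\theta_n(a)x)=\tau(ax)$ for $x\in\A{A}_n$, the module properties, the fact that the range is $\A{A}_n$, and especially the coherence $\theta_n\circ\theta_m=\theta_n$ all become one-line consequences of faithfulness of $\tau$, while the CPC condition is the one place requiring real input — which you correctly supply via the slice-map identification $\theta_n\rs\A{A}_m = \mathrm{id}\otimes\tau_{n,m}$ (or, as a shortcut, Tomiyama's theorem once $\norm{\theta_n}\le 1$ is known). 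Worth noting: the paper's Haar average and your trace-preserving expectation are the same operator — uniqueness of the trace-preserving conditional expectation onto a finite-dimensional subalgebra guarantees this — so the two proofs are different packagings of the same canonical object, and your trace-based packaging arguably makes the coherence identity, which is the point of the Fact, the most transparent.
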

\begin{proof}[Proof sketch]
  Given $n < m$, consider $\A{A}_n'\cap\A{A}_m$.  This is a finite-dimensional, unital C*-subalgebra of $\A{A}$.  Let $\U$ be its unitary group; then $\U$ is compact, and hence has a bi-invariant Haar measure $\mu$.  Define $\theta_{n,m} : \A{A}_m\to\A{A}_m$ by
  \[
    \theta_{n,m}(a) = \int uau^* \,d\mu(u)
  \]
  where the integral above is defined weakly, i.e., entrywise for some matrix representation of $\A{A}_m$.  Then $\theta_{n,m}$ maps into $\A{A}_n$.  Moreover, $\theta_{n,m}\circ\theta_{m,p} = \theta_{n,p}$ whenever $n < m < p$.  It follows that there is a map $\theta_n : \A{A}\to\A{A}_n$ satisfying $\theta_n(a) = \theta_{n,m}(a)$ whenever $a\in\A{A}_m$.  The conditions on $\theta_n$ are easily checked.
\end{proof}

We will be concerned primarily with corona algebras of the form $\prod \A{A}_n / \bigoplus \A{A}_n$, which we call \emph{reduced products}.  The following proposition describes some of the structure of reduced products of UHF algebras, and will play an important part in the results to follow.  In particular, it leads to our definition of a coherent family of $*$-homomorphisms, which we state afterwards.
\begin{prop}
  \label{stratification}
  Let $\A{A}_n$ ($n\in\NN$) be a sequence of UHF algebras, and for each $n\in\NN$ let $\A{A}_{n,k}$ ($k\in\NN$) be a suitable sequence of subalgebras of $\A{A}_n$.  For each $\xi\in\NN^\NN$, let
  \[
    \Q_\xi = \prod \A{A}_{n,\xi(n)} / \bigoplus \A{A}_{n,\xi(n)} \subseteq \prod \A{A}_n / \bigoplus \A{A}_n
  \]
  Then $\Q_\xi \subseteq \Q_\eta$ if and only if $\xi <^* \eta$, and if $X$ is any cofinal subset of $(\NN^\NN, <^*)$, then
  \[
    \prod \A{A}_n / \bigoplus \A{A}_n = \bigcup_{\xi\in X} \Q_\xi
  \]
\end{prop}
\begin{proof}
  Let $x\in\prod \A{A}_n$ be given.  For each $n\in\NN$ we may choose some $\xi(n)\in\NN$ large enough that
  \[
    \norm{\proj{n}{x} - \A{A}_{n,\xi(n)}} \le \frac{1}{n+1}
  \]
  Hence, there is a sequence $\bar{x}\in \prod \A{A}_{n,\xi(n)}$ such that $\norm{\proj{n}{x} - \proj{n}{\bar{x}}} \to 0$ as $n\to\infty$, and so $\pi(x) = \pi(\bar{x}) \in \Q_\xi$.  The rest is straightforward.
\end{proof}

\begin{rmk}
  By Fact~\ref{ce.fact}, the $\bar{x}$ in the above proof may be chosen in a canonical way, namely we may choose $\proj{n}{\bar{x}} = \theta_{n,\xi(n)}(\proj{n}{x})$ ($n\in\NN$) where $\theta_{n,k} : \A{A}_n\to \A{A}_{n,k}$ ($k\in\NN$) is a sequence of conditional expectations fixed in advance.
\end{rmk}

\begin{defn}
  Let $\A{A}_n$ ($n\in\NN$) be a sequence of UHF algebras, and let $\A{A}_{n,k}$ ($k\in\NN$) be a suitable sequence for $\A{A}_n$, for each $n\in\NN$.  Let $\A{B}$ be a C*-algebra.  A family of $*$-homomorphisms $\homo^\xi_n : \A{A}_{n,\xi(n)} \to \A{B}$ ($\xi\in\NN^\NN$, $n\in\NN$) is called \emph{coherent} relative to the sequences $\A{A}_{n,k}$, if for each $\xi <^* \eta$, 
  \[
    \lim_n \norm{\homo^\eta_n\rs\A{A}_{n,\xi(n)} - \homo^\xi_n} = 0
  \]
\end{defn}

The following Proposition is now immediate from Propositions~\ref{hom.reverse} and~\ref{stratification}.
\begin{prop}
  \label{limit-homo}
  Suppose $\homo^\xi_n$ ($\xi\in\NN^\NN$, $n\in\NN$) is a coherent family of $*$-homomorphisms relative to suitable sequences $\A{A}_{n,k}$, all mapping into a C*-algebra $\A{B}$.  Suppose moreover that, for each $\xi$, the projections $p^\xi_n = \homo^\xi_n(1_{\A{A}_n})$ ($n\in\NN$) are pairwise-orthogonal and have sums $\sum_{n\le m} p^\xi_n$ which converge strictly in $\A{M}(\A{B})$.  Then for each $\xi$ there is a unique $*$-homomorphism $\homo^\xi : \prod \A{A}_{n,\xi(n)} \to \A{M}(\A{B})$ such that $\homo^\xi\circ j_n = \homo^\xi_n$ for every $n\in\NN$.  Moreover, if $\vp^\xi : \A{Q}_\xi \to \A{Q}(\A{B})$ is the $*$-homomorphism induced by $\homo^\xi$, then $\vp^\eta$ extends $\vp^\xi$ whenever $\xi <^* \eta$, and there is a unique $\vp : \prod \A{A}_n / \bigoplus \A{A}_n \to \A{M}(\A{B}) / \A{B}$ which extends every $\vp^\xi$.
\end{prop}
The $*$-homomorphism $\vp$ above is said to be \emph{determined} by the coherent family $\homo^\xi_n$.  We can now rephrase Theorem~\ref{main.cfh} as follows; $\TA + \MA$ implies that every isomorphism of the form $\prod \A{A}_n / \bigoplus \A{A}_n \simeq \prod \A{B}_n / \bigoplus\A{B}_n$, where each $\A{A}_n$ and $\A{B}_n$ is a UHF algebra, is determined by a coherent family of $*$-homomorphisms.  To end this section we will prove Corollary~\ref{main.cor} from Theorem~\ref{main.cfh}.  Before starting we will need one more structural result on reduced products of UHF algebras.
\begin{prop}
  \label{central.sequences}
  Let $\A{A}_n$ ($n\in\NN$) be a sequence of UHF algebras.  Then the center of $\prod \A{A}_n / \bigoplus \A{A}_n$ is canonically isomorphic to $\ell^\infty / c_0$.
\end{prop}
\begin{proof}
  Define $i : \ell^\infty \to \prod \A{A}_n$ by $\proj{n}{i(x)} = \proj{n}{x} 1_{\A{A}_n}$ for $n\in\NN$ and $x\in\ell^\infty$.  Since the center of a UHF algebra is trivial, it follows that the image of $i$ is exactly the center of $\prod\A{A}_n$.  Moreover, $i$ maps $c_0$ into $\bigoplus \A{A}_n$, and hence induces an injective map
  \[
    j : \ell^\infty / c_0 \to \Z\left(\prod\A{A}_n / \bigoplus \A{A}_n\right)
  \]
  It suffices to show that $j$ is surjective.  Suppose $x\in\prod\A{A}_n$ and $\pi(x)$ is central in $\prod\A{A}_n / \bigoplus \A{A}_n$.  Fix a suitable sequence $\A{A}_{n,k}$ ($k\in\NN$) of subalgebras for each $\A{A}_n$.  By Proposition~\ref{stratification} above, we may assume without loss of generality that for some $\xi\in\NN^\NN$, we have $\proj{n}{x}\in\A{A}_{n,\xi(n)}$ for all $n\in\NN$.  Since each $\A{A}_{n,\xi(n)}$ is finite-dimensional, its unitary group has a bi-invariant Haar measure $\mu_n$.  Then let
  \[
    \proj{n}{z} = \int u \proj{n}{x} u^* \,d\mu_n(u)
  \]
  (cf. the sketch of Fact~\ref{ce.fact} above.)  It is straightforward to show that each $\proj{n}{z}$ is scalar, and $\norm{\proj{n}{x} - \proj{n}{z}} \to 0$ as $n\to\infty$, and this completes the proof.
\end{proof}
By Proposition~\ref{central.sequences}, every isomorphism $\vp$ between reduced products of UHF algebras must restrict to an automorphism of $\ell^\infty / c_0$.  We will call this automorphism the \emph{central automorphism induced by $\vp$}.
\begin{proof}[Proof of Corollary~\ref{main.cor}]
  Let $\vp$ be an isomorphism
  \[
    \prod \A{A}_n / \bigoplus \A{A}_n \to \prod \A{B}_n / \bigoplus \A{B}_n
  \]
  and let $\sigma$ be the central automorphism induced by $\vp$.  Then by $\TA + \MA$ and the main result of~\cite{Velickovic.OCAA}, there are cofinite sets $\SN{A},\SN{B}\subseteq\NN$ and a bijection $e : \SN{A}\to\SN{B}$ such that $x\mapsto x\circ e^{-1}$ is a lift of $\sigma$.  We claim now that for all but finitely many $n\in\SN{A}$, $\A{A}_n \simeq \A{B}_{e(n)}$.  Suppose otherwise; then there is some infinite $\SN{I}\subseteq\SN{A}$ such that for all $n\in\SN{I}$, $\A{A}_n$ and $\A{B}_{e(n)}$ are not isomorphic.  Let $s_n$ and $t_n$ be the supernatural numbers associated to $\A{A}_n$ and $\A{B}_{e(n)}$, respectively; then for each $n\in\SN{I}$ there is some prime $p_n$ which divides one of $s_n,t_n$, but not the other.  Without loss of generality, $p_n\mid s_n$ and $p_n\nmid t_n$ for all $n\in\SN{I}$.  For $n\not\in\SN{I}$ let $p_n = 1$.  Now let $\A{F}_n$ be a C*-subalgebra of $\A{A}_n$ isomorphic to $M_{p_n}(\CC)$, with $1_{\A{A}_n}\in \A{F}_n$, for each $n$; by Theorem~\ref{main.cfh} there is a strict algebraic lift $\homo$ of the restriction of $\vp$ to the C*-subalgebra
  \[
    \prod \A{F}_n / \bigoplus \A{F}_n
  \]
  Let $\alpha_n : \A{F}_n\to \bigoplus \A{B}_k$ be the coordinate $*$-homomorphisms.  Notice that $\alpha_n(1_{\A{A}_n}) - 1_{\A{B}_{e(n)}}$ tends to zero as $n\to\infty$.  It follows that $\alpha_n(1_{\A{A}_n}) = 1_{\A{B}_{e(n)}}$ for all but finitely many $n\in\NN$.  Hence, for all but finitely many $n\in\NN$, there is a unital embedding of $M_{p_n}(\CC)$ into $\A{B}_{e(n)}$, i.e. $p_n \mid t_{e(n)}$.  This contradicts the previous assumption.
\end{proof}

\section{Definable embeddings}
\label{sec:definable-embeddings}

Let $\rho : \A{A}\to \A{B}$ be a map between C*-algebras.  The \emph{defect} of $\rho$ is defined to be the supremum, over all $a,b$ in the unit ball of $\A{A}$ and $t\in\CC$ with $|t|\le 1$, of the maximum of the following quantities:
\begin{gather*}
  \norm{\rho(ab) - \rho(a)\rho(b)} \\
  \norm{\rho(a+b) - (\rho(a)+\rho(b))} \\
  \norm{\rho(a^*) - \rho(a)^*} \\
  \norm{\rho(ta) - t\rho(a)} \\
  |\norm{a} - \norm{\rho(a)}|
\end{gather*}
The defect of $\rho$ thus measures how far $\rho$ is from being a $*$-homomorphism.

\begin{thm}(Farah,~\cite[Theorem 5.1]{Farah.CO})
  \label{ulam.fd}
  There is a universal constant $K_{FD}$ such that for any two finite-dimensional C*-algebras $\A{A}$ and $\A{B}$, and any Borel-measurable map $\rho : \A{A}\to \A{B}$, if the defect $\delta$ of $\rho$ is less than $1/1000$ then there is a $*$-homomorphism $\vp : \A{A}\to \A{B}$ such that $\norm{\rho - \vp} \le K_{FD}\delta$.
\end{thm}

\begin{prop}
  \label{ulam.af}
  There is a universal constant $K_{AF}$ such that for any finite-dimensional C*-algebra $\A{A}$ and AF algebra $\A{B}$, and any map $\rho : \A{A}\to \A{B}$, if the defect $\delta$ of $\rho$ is less than $10^{-6}$ then there is a $*$-homomorphism $\vp : \A{A}\to \A{B}$ such that $\norm{\rho - \vp} \le K_{AF}\delta$.
\end{prop}
\begin{proof}
  Let $\rho : \A{A}\to \A{B}$ be a map with defect $\delta$, and assume $\delta < 1/1000$.  Let $X$ be a finite, $\delta$-dense subset of the unit sphere of $\A{A}$.  Since $\A{B}$ is AF, there is a finite-dimensional C*-subalgebra $\A{C}$ of $\A{B}$ such that $\rho(x)$ is within $\delta$ of $\A{C}$ for each $x\in X$.  For each $x\in X$, fix some $c_x\in \A{C}$ within $\delta$ of $\rho(x)$, and let $\sigma : \A{A}\to \A{C}$ be the map defined by setting $\sigma(a) = \norm{a}c_x$, where $x$ is the first member of $X$ which is within $\delta$ of $a/\norm{a}$, in some fixed ordering of $X$.  It follows that $\norm{\sigma - \rho} \le 2\delta$, and hence $\sigma$ has small defect.  $\sigma$ is also, clearly, Borel-measurable.   Hence by Theorem~\ref{ulam.af} there is a $*$-homomorphism $\vp : \A{A}\to \A{C}$ close to $\sigma$, and hence close to $\rho$.
\end{proof}

The following theorem, which is the main result of this section, has at its heart an application of Proposition~\ref{ulam.af} to a sequence of functions from finite-dimensional C*-algebras into a fixed AF algebra.  The crucial detail is the independence of $K_{AF}$ from the dimension of the domain algebra.

\begin{thm}
  \label{definable->lifts}
  Assume $\TA + \MA$.  Let $\vp$ be an injective $*$-homomorphism of the form
  \[
    \prod \A{F}_n / \bigoplus \A{F}_n \to \A{M}(\A{A})/\A{A}
  \]
  where $\A{A}$ is a separable AF algebra, and each $\A{F}_n$ is a finite-dimensional C*-algebra.  Suppose $\vp$ has a lift which is strictly continuous on a dense $G_\delta$.  Then $\vp$ has a strict algebraic lift $\homo$.
\end{thm}

\begin{proof}
  Let $\e_n = 2^{-n}$ and fix an increasing approximate unit $r_n$ ($n\in\NN$) of projections in $\A{A}$.  Let $\A{F} = \prod\A{F}_n$, and for $\SN{A}\subseteq\NN$, write
  \[
    \A{F}\rs\SN{A} = \prod_{n\in\SN{A}} \A{F}_n
  \]
  and similarly $\X\rs\SN{A} = \X\cap (\A{F}\rs\SN{A})$ for subsets $\X$ of $\A{F}$.  In particular let $X_n$ be a finite, $\e_n$-dense subset of the unit ball of $\A{F}_n$, and let $\X = \prod X_n$.  Under the strict topology, $\X$ (and each $\X\rs\SN{A}$) is homeomorphic to a perfect, compact subset of the Baire space $\NN^\NN$.
  
  Now fix a lift $L : \A{F}\to\A{M}(\A{A})$ of $\vp$, which is (strictly) continuous on a dense $G_\delta$ set $G\subseteq\X$.  By a standard argument (see~\cite{Farah.CO},~\cite{Jalali-Naini},~\cite{Talagrand.Submeasure}) we may find sequences $0 = n_0 < n_1 < \cdots$ and $t_i\in\X\rs [n_i,n_{i+1})$ such that for all $x\in\X$, if $x$ extends $t_i$ for infinitely many $i$, then $x\in G$.  Now let
  \[
    t^0 = \sum t_{2i}\qquad t^1 = \sum t_{2i+1}
  \]
  (The sums converge in the strict topology.)  Also, let, $\SN{A}^0 = \bigcup [n_{2i},n_{2i+1})$ and $\SN{A}^1 = \bigcup [n_{2i+1},n_{2i+2})$.  It follows that the map
  \[
    x\mapsto L(x\rs\SN{A}^0 + t^1) + L(x\rs\SN{A}^1 + t^0) - L(t^1) - L(t^0)
  \]
  lifts $\vp$ and is continuous on all of $\X$; replacing $L$ with this map, we may assume the same holds of $L$.

  \begin{claim}
    For every $n$ and $\e > 0$ there are $k > n$ and $t\in \X\rs [n,k)$ such that for all $x,y\in\X$ extending $t$,
    \begin{enumerate}
      \item\label{stable.head}
      if for all $i < n$, $\proj{i}{x} = \proj{i}{y}$, then
      \[
        \norm{(L(x) - L(y))r_n} \le \e \quad\mbox{and}\quad \norm{r_n(L(x) - L(y))} \le \e
      \]
      \item\label{stable.tail}
      if for all $i \ge k$, $\proj{i}{x} = \proj{i}{y}$, then
      \[
        \norm{(L(x) - L(y))(1 - r_k)} \le \e\quad\mbox{and}\quad \norm{(1 - r_k)(L(x) - L(y))} \le \e
      \]
    \end{enumerate}
  \end{claim}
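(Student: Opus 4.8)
The plan is to prove each part by a separate compactness/continuity argument, exploiting the fact that $L$ is now continuous on the compact space $\X$ (in the strict topology) together with the fact that $L$ lifts $\vp$, so $L(x) - L(y) \in \A{A}$ whenever $\quomap(x) = \quomap(y)$, i.e. whenever $x$ and $y$ agree on a cofinite set of coordinates. I will treat part~(\ref{stable.head}) in detail; part~(\ref{stable.tail}) is symmetric, replacing ``head'' data $r_n$ and ``$i < n$'' by ``tail'' data $1 - r_k$ and ``$i \ge k$,'' and using that the $r_m$ form an approximate unit so that $\norm{a(1-r_m)} \to 0$ and $\norm{(1-r_m)a} \to 0$ for any fixed $a \in \A{A}$.

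For part~(\ref{stable.head}): fix $n$ and $\e > 0$. For each finite partial condition $s \in \X\rs[n,k)$ (ranging over all $k > n$) consider the (closed) set of pairs $(x,y) \in \X \times \X$ which extend $s$ and agree on all coordinates $i < n$. First I would argue that for every pair $(x,y) \in \X\times\X$ with $\proj{i}{x} = \proj{i}{y}$ for $i < n$ and with $x, y$ eventually equal, we have $\norm{(L(x)-L(y))r_n} \le \e/2$ and $\norm{r_n(L(x)-L(y))} \le \e/2$ as soon as $x$ and $y$ agree beyond some coordinate; indeed, since $L(x) - L(y) \in \A{A}$ and $r_n$ is a fixed projection in $\A{A}$, this is just a property of a single element of $\A{A}$ (and here it holds outright, with $0$ on the right side, but I will not need that). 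The real content is to make the cutoff $k$ work uniformly. To this end I would suppose the conclusion fails: then for every $k > n$ and every $t \in \X\rs[n,k)$ there are $x_t, y_t \in \X$ extending $t$, agreeing below $n$, with (say) $\norm{(L(x_t) - L(y_t))r_n} > \e$. Enumerate $\X\rs[n,k)$ for increasing $k$ and build, via König's lemma on the finitely-branching tree $\bigcup_k \X\rs[n,k)$, a single $t^* \in \X\rs[n,\omega) = \prod_{i \ge n} X_i$ together with sequences $x^{(j)}, y^{(j)} \to x^*, y^*$ (strictly) where $x^*, y^*$ both extend $t^*$ below some threshold and agree below $n$; by continuity of $L$ on $\X$ and continuity of $m \mapsto \norm{m r_n}$, we get $\norm{(L(x^*) - L(y^*))r_n} \ge \e$. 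But $x^*$ and $y^*$ agree on a cofinite set (all coordinates $\ge n$ where $t^*$ dictates the value, and all coordinates $< n$), so $\quomap(x^*) = \quomap(y^*)$, hence $L(x^*) - L(y^*) \in \A{A}$; in fact since the values on $[n,\omega)$ are fully determined by $t^*$ and the values below $n$ agree, $x^* = y^*$, contradicting $\norm{(L(x^*)-L(y^*))r_n} \ge \e > 0$.

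The main obstacle is the uniformity in the cutoff $k$: a naive pointwise argument only gives, for each individual pair $(x,y)$, a tail beyond which the estimates hold, but that tail depends on the pair. The compactness of $\X$ in the strict topology (so that $\X \cong$ a compact subset of $\NN^\NN$) is what upgrades this to a single $k$ and a single finite stem $t$ that work for all extensions simultaneously; this is the familiar ``stabilization on a finite stem'' phenomenon and is precisely why the reduction to continuous $L$ on all of $\X$ (accomplished in the paragraph preceding the Claim by the $\SN{A}^0/\SN{A}^1$ splitting trick) was made first. A secondary point to handle carefully is that in part~(\ref{stable.tail}) one cannot take $0$ on the right: there $L(x) - L(y)$ lies in $\A{A}$ only when $x$ and $y$ agree beyond $k$, and $\norm{(L(x)-L(y))(1-r_k)}$ is small because $1 - r_k$ is far out in the approximate unit — so the $k$ must be chosen large enough to make $\norm{a(1-r_k)} \le \e$ for the relevant $a$'s, and the compactness argument again supplies a uniform such $k$ together with the stem $t \in \X\rs[n,k)$.
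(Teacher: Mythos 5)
The paper proves part~\eqref{stable.tail} first, by a Baire category argument, and then derives part~\eqref{stable.head} as an easy consequence of continuity; your proposal inverts this. Your argument for part~\eqref{stable.head} is essentially sound: if $x_t, y_t$ agree on $[0,k_t)$ and by compactness (after passing to a subsequence) converge strictly to the same $x^* = y^*$, then $L(x_t) - L(y_t) \to 0$ strictly, and since $r_n \in \A{A}$, the seminorm $m \mapsto \norm{mr_n} + \norm{r_n m}$ \emph{is} a strict seminorm, so $\norm{(L(x_t)-L(y_t))r_n} \to 0$, giving the desired contradiction. So far so good.

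The gap is in part~\eqref{stable.tail}, and it is not ``symmetric'' to part~\eqref{stable.head} in the way you claim. There, the bad pairs $x_t, y_t$ agree only on $[n,\infty)$ and may differ freely below $n$, so a convergent subsequence gives $x^* \ne y^*$ in general; one only gets $\pi(x^*) = \pi(y^*)$ and hence $a := L(x^*) - L(y^*) \in \A{A}$, with $\norm{a(1-r_k)} \to 0$. The problem is that $L(x_t) - L(y_t) \to a$ \emph{strictly}, not in norm, and $m \mapsto \norm{m(1-r_k)}$ is not a strict seminorm since $1 - r_k \notin \A{A}$. Therefore you cannot conclude from $\norm{a(1-r_{k_t})} \to 0$ that $\norm{(L(x_t)-L(y_t))(1-r_{k_t})}$ tends to $0$; this is exactly where your contradiction would have to come from, and it does not materialize. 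In fact, compactness of $\X$ does \emph{not} yield a uniform cutoff $k$ valid for all extensions of the empty stem; the claim is deliberately local (it only asserts the estimate for $x,y$ extending some finite $t$), and the localization is what must be produced. The paper produces it by a different mechanism: for each $k$ define $V_k \subseteq \X\rs[n,\infty)$ to be the set of $z$ for which some $s,s' \in \X\rs n$ make $\norm{(L(s+z)-L(s'+z))(1-r_k)}$ or $\norm{(1-r_k)(L(s+z)-L(s'+z))}$ exceed $\e$. Each $V_k$ is open --- not by continuity of $L$ into $\A{A}$ with the norm topology (which fails), but because $m \mapsto \norm{m(1-r_k)}$ is \emph{lower semicontinuous} in the strict topology (it is a supremum of the strict seminorms $m \mapsto \norm{m(1-r_k)a}$, $a \in \A{A}_1$), and $\X\rs n$ is finite. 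Since $\bigcap_k V_k = \emptyset$ (each $z$ escapes $V_k$ for large $k$), the Baire Category Theorem gives a $V_m$ that is not dense, hence a stem $s \in \X\rs[n,\ell)$, $\ell \ge m$, none of whose extensions lie in $V_m$; taking $t = s$ and $k = \ell$ (and using $\norm{a(1-r_\ell)} \le \norm{a(1-r_m)}$) gives part~\eqref{stable.tail}. Your proposal needs to be replaced by this Baire-category argument, or something equivalent that does not rely on passing norm estimates through strict limits.
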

  \begin{proof}
    We will work towards condition~\eqref{stable.tail} first; condition~\eqref{stable.head} will then follow easily from the continuity of $L$.
    
    Fix $n$ and $\e > 0$, and for each $k > n$ define $V_k\subseteq \X\rs [n,\infty)$ by placing $x\in V_k$ if and only if there are $s,t\in\X\rs n$ with
    \[
      \norm{(L(s + x) - L(t + x))(1 - r_k)} > \e \quad\mbox{or}\quad \norm{(1 - r_k)(L(s + x) - L(t + x))} > \e
    \]
    Then, $V_k$ is an open subset of $\X\rs [n,\infty)$, by continuity of $L$.  For any given $x\in\X\rs [n,\infty)$ and $s,t\in\X\rs n$, there is some $k$ such that
    \[
      \norm{(L(s + x) - L(t + x))(1 - r_k)}, \norm{(1 - r_k)(L(s + x) - L(t + x))} \le \e
    \]
    since the difference $L(s + x) - L(t + x)$ is a member of $\A{A}$.  As $\X\rs n$ is finite, it follows that for any given $x\in\X\rs [n,\infty)$ there is some $k$ with $x\not\in V_k$.  Thus $\bigcap V_k = \emptyset$.  By the Baire Category Theorem, there must be some $m$ such that $V_m$ is not dense; then we may find $\ell\ge m$ and $s\in\X\rs [n,\ell)$ such that no $x\in\X\rs [n,\infty)$ extending $s$ can be in $V_m$.  Condition~\eqref{stable.tail} follows with the choice of $t = s$ and $k = \ell$; to complete the proof, we use continuity of $L$ to find $k \ge \ell$ and $t\in\X\rs [n,k)$ extending $s$ which satisfies~\eqref{stable.head} as well.
  \end{proof}

  We call a $t$ as in the claim an \emph{$\e$-stabilizer} for the interval $[n,k)$.  By the claim, we may construct sequences $0 = n_0 < n_1 < \cdots$ and $t_i\in\X\rs [n_i,n_{i+1})$ such that $t_i$ is an $\e_i$-stabilizer for the interval $[n_i,n_{i+1})$.  For each $\zeta < 3$ put
  \begin{align*}
    \SN{A}_\zeta & = \bigcup\set{[n_i,n_{i+1})}{i\equiv \zeta\pmod{3}} \\
    z_\zeta & = \sum \set{t_i}{i\equiv \zeta\pmod{3}}
  \end{align*}
  and define a function $L_\zeta$ by
  \[
    L_\zeta(x) = L(x + z_{\zeta+1} + z_{\zeta+2}) - L(z_{\zeta+1} + z_{\zeta+2})
  \]
  (Where $\zeta + 1$ and $\zeta + 2$ are computed mod $3$.)  Clearly, each $L_\zeta$ lifts $\vp$.  If $x\in \PA{F}_1$, let $f(x)$ be some sequence in $\X$ such that for each $n$, $\norm{\proj{n}{x} - \proj{n}{f(x)}}$ is minimal.  Then in particular, $\quo{x} = \quo{f(x)}$.  Let $q_i = r_{n_{i+2}} - r_{n_{i-1}}$, setting $n_{-1} = 0$.  Note that $q_i \perp q_j$ if $i$ and $j$ differ by at least three, and moreover
  \[
    \sum_{i\equiv \zeta\bmod{3}} q_i = 1 - r_{n_{\zeta - 1}}
  \]
  for $\zeta = 0,1,2$.  Define maps $\rho_i : \PA{F}\rs [n_i, n_{i+1}) \to q_i \A{A} q_i$ by
  \[
    \rho_i(x) = \norm{x} q_i L_\zeta(f(x / \norm{x})) q_i
  \]
  where $i\equiv \zeta\pmod{3}$.
  \begin{claim}
    The map
    \[
      x\mapsto \sum_{i\equiv \zeta\bmod{3}} \rho_i(x\rs [n_i,n_{i+1}))
    \]
    lifts $\vp$ on $\SN{A}_\zeta$.
  \end{claim}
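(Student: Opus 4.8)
The plan is to prove the Claim by showing that the displayed map, which I will call $\Phi_\zeta$, agrees modulo $\A{A}$ with $L_\zeta$ applied to a suitable representative of $\quo{x}$; since $L_\zeta$ lifts $\vp$, this is exactly what is needed. First I would record the routine points. Writing $x_i = x\rs[n_i,n_{i+1})$, the projections $q_i$ with $i\equiv\zeta\pmod 3$ are pairwise orthogonal with $\sum_{i\equiv\zeta}q_i = 1-r_{n_{\zeta-1}}$ (the telescoping identity above), and $\rho_i(x_i)\in q_i\A{A}q_i$, so the partial sums of $\sum_{i\equiv\zeta}\rho_i(x_i)$ form a block-diagonal strictly Cauchy net and $\Phi_\zeta(x)\in\A{M}(\A{A})$ is well defined. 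It suffices to check $\quo{\Phi_\zeta(x)} = \vp(\quo{x})$ for $x$ supported on $\SN{A}_\zeta$; scaling, take $\norm{x}\le 1$ and set $\bar x_i = \norm{x_i}f(x_i/\norm{x_i})$ (read as $0$ when $x_i=0$), $\bar x = \sum_{i\equiv\zeta}\bar x_i$. Since $f$ displaces the $n$-th coordinate by at most $\e_n\to 0$, $\quo{\bar x} = \quo{x}$, so it is enough to prove $\Phi_\zeta(x) - L_\zeta(\bar x)\in\A{A}$. (We may assume $0\in X_n$, so the $\bar x_i$ and the elements built from them below all lie in $\X$.)

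The engine is a localization estimate coming from the stabilizers. Suppose $i\equiv\zeta$ and $a,b\in\X$ are of the form ``data supported on $\SN{A}_\zeta$, plus $z_{\zeta+1}+z_{\zeta+2}$'' and agree off the block $[n_i,n_{i+1})$. Then $a,b$ both extend $t_{i-1}$ (block $i-1\equiv\zeta+2$) and $t_{i+1}$ (block $i+1\equiv\zeta+1$), agree on all coordinates $<n_{i-1}$, and agree on all coordinates $\ge n_{i+1}$. Applying condition~\eqref{stable.head} of the stabilizer Claim to $t_{i-1}$ and condition~\eqref{stable.tail} to $t_{i+1}$ gives
\[
  \norm{r_{n_{i-1}}(L(a)-L(b))},\ \norm{(L(a)-L(b))r_{n_{i-1}}}\le\e_{i-1},\qquad
  \norm{(1-r_{n_{i+2}})(L(a)-L(b))},\ \norm{(L(a)-L(b))(1-r_{n_{i+2}})}\le\e_{i+1},
\]
and since $q_i = r_{n_{i+2}}-r_{n_{i-1}}$ this forces $\norm{(L(a)-L(b)) - q_i(L(a)-L(b))q_i}\le 2(\e_{i-1}+\e_{i+1})$, i.e.\ $L(a)-L(b)$ is $q_i$-localized up to an error tending to $0$ with $i$. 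A second use of the same two conditions --- now cutting off a disagreement confined to $\zeta$-blocks preceding block $i$, by means of the stabilizer attached to the top such block --- shows that if $a,b\in\X$ (of the same form) differ only in $\zeta$-blocks and agree on all coordinates $\ge n_{i-1}$, then $\norm{q_i(L(a)-L(b))q_i}\le\e_{i-2}$.

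With these I would run a telescoping argument. Enumerate $\{i:i\equiv\zeta\}$ as $i_1<i_2<\cdots$, set $v = z_{\zeta+1}+z_{\zeta+2}$, $B_k = v+\sum_{\ell\le k}\bar x_{i_\ell}$ and $B^{(k)} = v+\bar x_{i_k}$, all in $\X$. Since $L$ is strictly continuous on $\X$ and $B_k\to\bar x+v$ strictly, $L_\zeta(\bar x) = L(\bar x+v)-L(v) = \sum_k\bigl(L(B_k)-L(B_{k-1})\bigr)$ strictly. The first estimate applies to each summand $L(B_k)-L(B_{k-1})$ and to each $L(B^{(k)})-L(v)$ (pairs agreeing off block $i_k$), and the second estimate gives $q_{i_k}L(B_k)q_{i_k}\approx q_{i_k}L(B^{(k)})q_{i_k}$ and $q_{i_k}L(B_{k-1})q_{i_k}\approx q_{i_k}L(v)q_{i_k}$ with error $O(\e_{i_k-2})$, since $B_k$ and $B^{(k)}$ (resp.\ $B_{k-1}$ and $v$) agree on all coordinates $\ge n_{i_k}$ and differ only in $\zeta$-blocks $\le i_{k-1}=i_k-3$. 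Chaining, $\norm{(L(B_k)-L(B_{k-1})) - (L(B^{(k)})-L(v))}\to 0$; these differences are $q_{i_k}$-localized up to small error with the $q_{i_k}$ pairwise orthogonal, so their sum lies in $\A{A}$, and hence $L_\zeta(\bar x)\equiv\sum_k\bigl(L(B^{(k)})-L(v)\bigr)\pmod{\A{A}}$. Finally I would compare the $k$-th term $L(B^{(k)})-L(v) = L\bigl(v+\norm{x_{i_k}}f(x_{i_k}/\norm{x_{i_k}})\bigr)-L(v)$ with $\rho_{i_k}(x_{i_k}) = \norm{x_{i_k}}\,q_{i_k}\bigl(L(v+f(x_{i_k}/\norm{x_{i_k}}))-L(v)\bigr)q_{i_k}$: both are $q_{i_k}$-localized, and they represent the same element of $\A{Q}(\A{A})$. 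When $x$ is unitary each $\norm{x_{i_k}}$ equals $1$ and this last comparison is a triviality; and this is all that will be needed, since downstream each $\rho_i$ is superseded by a genuine $*$-homomorphism $\psi_i$ with $\norm{\psi_i-\rho_i}\to 0$, and a $*$-homomorphism into a corona is determined by its restriction to the unitary group.

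I expect the main obstacle to be the bookkeeping in this telescoping step: one must track precisely which block carries which stabilizer, and which pairs of elements of $\X$ agree on which initial and final segments, so that conditions~\eqref{stable.head} and~\eqref{stable.tail} can be invoked with errors that, after multiplication by the orthogonal projections $q_{i_k}$, are summable into $\A{A}$. Once one restricts attention to unitary $x$ the homogeneity/weight point disappears, and only this combinatorial accounting requires care.
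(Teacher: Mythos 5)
Your telescoping argument --- enumerate the $\zeta$-blocks as $i_1 < i_2 < \cdots$, compare $L(B_k) - L(B_{k-1})$ with $L(B^{(k)}) - L(v)$ via the stabilizer-driven localization estimates, and sum --- is in substance the same mechanism the paper uses, which writes $(1 - r_{n_{\zeta-1}})L_\zeta(x) - \sum_i q_i L_\zeta(x\rs[n_i,n_{i+1}))q_i$ as the sum of three norm-convergent series built from exactly the same two stabilizer bounds $\norm{q_i(L_\zeta(x) - L_\zeta(x\rs[n_i,n_{i+1})))}\lesssim\e_i$ and $\norm{(L_\zeta(x\rs[n_i,n_{i+1}))-L_\zeta(0))(1-q_i)}\lesssim\e_i$. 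Up to that point your proposal is a faithful reconstruction; the differences from the paper (incremental partial sums rather than three explicit series; $\bar x$ assembled blockwise rather than a single application of $f$ followed by $x - f(x)\in\A{A}$) are cosmetic.

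The ending, however, contains a genuine gap. Your claim that ``this is all that will be needed,'' i.e.\ that it suffices to verify the Claim at unitary $x$, does not hold, and it is not a detail. The paper uses the present Claim in the very next paragraph to prove that the defect $\delta_i$ of $\rho_i$ tends to $0$: one takes \emph{arbitrary} $a_i, b_i\in\PA{F}_1\rs[n_i,n_{i+1})$, forms $a = \sum a_i$, $b = \sum b_i$, and applies the Claim to $a$, $b$, $ab$, and, for the other defect quantities, to $a+b$, $ta$, $a^*$. The defect of $\rho_i$ is a supremum over the unit ball, not over the unitary group, and elements like $a+b$ or $ta$ with $|t|<1$ are not unitary. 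Your escape hatch --- that ``downstream each $\rho_i$ is superseded by a genuine $*$-homomorphism $\psi_i$ \dots and a $*$-homomorphism into a corona is determined by its restriction to the unitary group'' --- is circular: the $*$-homomorphisms $\homo_i$ are produced by Proposition~\ref{ulam.af} \emph{because} $\rho_i$ has small defect, which is precisely what the Claim (for arbitrary inputs in the unit ball) is being used to establish. You cannot appeal to their existence to weaken what the Claim must say. Concretely, for a non-unitary block $x_{i_k}$ the comparison of $L(B^{(k)}) - L(v) = L\bigl(v + \norm{x_{i_k}} f(x_{i_k}/\norm{x_{i_k}})\bigr) - L(v)$ with $\rho_{i_k}(x_{i_k}) = \norm{x_{i_k}}\, q_{i_k}\bigl(L(v + f(x_{i_k}/\norm{x_{i_k}})) - L(v)\bigr)q_{i_k}$ is not a triviality: the scaling sits inside $L$ in one expression and outside it in the other, and $L$ is not linear. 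That step needs a real argument, or a reorganization of the proof (as in the paper) so that you never have to compare these two quantities term by term for each $k$.
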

  \begin{proof}
    Let $x\in\X\rs\SN{A}_\zeta$.  Fix an $i$ with $i\equiv\zeta\pmod{3}$, and consider $u = x\rs [n_i,\infty)$ and $v = x\rs [n_i,n_{i+1})$.  Then,
    \begin{align*}
      \lVert q_i (L_\zeta(x) & - L_\zeta(v)) \rVert = \norm{q_i (L(x + z_{\zeta+1} + z_{\zeta+2}) - L(v + z_{\zeta+1} + z_{\zeta+2}))} \\
        & \le \norm{(1 - r_{n_{i-1}})(L(x + z_{\zeta+1} + z_{\zeta+2}) - L(u + z_{\zeta+1} + z_{\zeta+2}))} \\
        & + \norm{r_{n_{i+2}}(L(u + z_{\zeta+1} + z_{\zeta+2}) - L(v + z_{\zeta+1} + z_{\zeta+2}))} \\
        & \le \e_{i-2} + \e_{i+1}
    \end{align*}
    by the properties of the stabilizers constructed above.  Similarly,
    \begin{align*}
      \norm{(L_\zeta(v) - L_\zeta(0))(1 - q_i)} & \le \norm{(L_\zeta(v) - L_\zeta(0))(1 - r_{n_{i+2}})} + \norm{(L_\zeta(v) - L_\zeta(0))r_{n_{i-1}}} \\
        & \le \e_{i+1} + \e_{i-1}
    \end{align*}
    Then, the following sums (taken over $i\equiv \zeta\pmod{3}$) converge in norm, and hence are members of $\A{A}$;
    \begin{align}
      \label{diff.rs} \sum & q_i (L_\zeta(x) - L_\zeta(x\rs [n_i,n_{i+1}))) \\
      \label{diff.pr} \sum & q_i (L_\zeta(x\rs [n_i,n_{i+1})) - L_\zeta(0))(1 - q_i) \\
      \sum & q_i L_\zeta(0)(1 - q_i)
    \end{align}
    Adding these together produces
    \[
      (1 - r_{n_{\zeta-1}})L_\zeta(x) - \sum_{i\equiv \zeta\bmod{3}} q_i L_\zeta(x\rs [n_i,n_{i+1})) q_i
    \]
    Hence the desired conclusion holds for all $x\in\X\rs\SN{A}_\zeta$.  The general case follows from the fact that $x - f(x)\in\A{A}$ whenever $x\in\PA{F}_1$.
  \end{proof}

  It follows that the defect $\delta_i$ of $\rho_i$ vanishes as $i$ tends to infinity.  For instance, if there were sequences $a_i,b_i\in \PA{F}_1\rs [n_i,n_{i+1})$ ($i\in\NN$) satisfying
  \[
    \limsup_i \norm{\rho_i(a_i b_i) - \rho_i(a_i)\rho_i(b_i)} > 0
  \]
  then letting $a = \sum a_i$ and $b = \sum b_i$, we would have $\norm{\vp(\quo{ab}) - \vp(\quo{a})\vp(\quo{b})} > 0$, a contradiction.  The other C*-algebra operations give analogous proofs.  By Proposition~\ref{ulam.af}, for large enough $i$ there is a $*$-homomorphism $\homo_i : \PA{F}\rs [n_i,n_{i+1})\to q_i \A{A} q_i$ such that $\norm{\rho_i - \homo_i} \le K_{AF}\delta_i$.  Then
  \[
    \homo^\zeta(x) = \sum\set{\homo_i(x\rs [n_i,n_{i+1}))}{i\equiv \zeta\pmod{3}}
  \]
  lifts $\vp$ on $\SN{A}_\zeta$, and is a $*$-homomorphism.  Hence $\homo = \homo^0 + \homo^1 + \homo^2$ (possibly with modifications on finitely-many coordinates) is as desired.
\end{proof}

\section{Embeddings under forcing axioms}
\label{sec:fa-embeddings}

In this section we prove the following strengthening of Theorem~\ref{main.cfh}.  The reader can easily deduce an analogous strengthening of Corollary~\ref{main.cor} using Theorem~\ref{fa->cfh.stronger} in place of Theorem~\ref{main.cfh}.
\begin{thm}
  \label{fa->cfh.stronger}
  Assume $\TA + \MA$.  Let $\A{F}_n$ and $\A{B}_n$ ($n\in\NN$) be sequences of full matrix algebras and UHF algebras, respectively, and suppose
  \[
    \vp : \prod \A{F}_n / \bigoplus \A{F}_n \to \prod \A{B}_n / \bigoplus \A{B}_n
  \]
  is an injective $*$-homomorphism which induces an automorphism of $\ell^\infty / c_0$.  Then $\vp$ has a strict algebraic lift.
\end{thm}
\begin{rmk}
  Let $\A{B}_{n,k}$ ($k\in\NN$) be suitable sequences for the UHF algebras $\A{B}_n$.  Then the conclusion of Theorem~\ref{fa->cfh.stronger} implies that there is some $\xi\in\NN^\NN$ such that the image of $\vp$ is contained in $\prod\A{B}_{n,\xi(n)} / \bigoplus \A{B}_{n,\xi(n)}$.  Indeed, let $\homo$ be a strict algebraic lift of $\vp$ and consider the coordinate $*$-homomorphisms $\homo_n = \homo\circ j_n : \A{F}_n\to \bigoplus \A{B}_m$.  By a straightforward argument, we have $\homo(1_{\A{F}_n}) - 1_{\A{B}_{e(n)}} \to 0$ for some function $e$.  Then for each $n$ we may find a $\xi(e(n))$ large enough such that there is a $*$-homomorphism $\gomo_n : \A{F}_n\to \A{B}_{e(n),\xi(e(n))}$ with $\norm{\homo_n - \gomo_n}$ tending to zero.  The sequence $\gomo_n$ then determines a strict $*$-homomorphism
  \[
    \gomo : \prod \A{F}_n \to \prod \A{B}_{n,\xi(n)}
  \]
  and $\gomo$ lifts $\vp$.
\end{rmk}
The remainder of this section is devoted to a proof of Theorem~\ref{fa->cfh.stronger}.  To this end we fix an injective $*$-homomorphism $\vp$ of the form
\[
  \prod \A{F}_n / \bigoplus \A{F}_n \to \prod \A{B}_n / \bigoplus \A{B}_n
\]
which induces an automorphism of $\ell^\infty / c_0$.  We will also assume $\TA + \MA$ for the rest of the section.  We will write $\PA{F} = \prod \A{F}_n$ and $\PA{B} = \prod\A{B}_n$, and for sets $\SN{A}\subseteq\NN$ we put
\[
  \PA{F}\rs\SN{A} = \prod_{n\in\SN{A}} \A{F}_n \qquad \PA{B}\rs\SN{A} = \prod_{n\in\SN{A}} \A{B}_n
\]
and we shorten the abominable $\vp\rs (\PA{F}\rs\SN{A})$ to just $\vp\rs\SN{A}$.  Note that, by the main result of~\cite{Velickovic.OCAA}, we may fix a function $e : \NN\to\NN$ such that for each central $\zeta\in \prod\A{F}_n$, we have $\vp(\quo{\zeta}) = \quo{\zeta\circ e}$.  By relabeling the C*-algebras $\A{B}_n$, we may, and will, assume that $e = \id$.  As in the proof of Corollary~\ref{main.cor}, this implies that any strict algebraic lift $\homo^{\SN{A}}$ of $\vp\rs\SN{A}$ must be determined by $*$-homomorphisms
\[
  \homo_n^\SN{A} : \A{F}_n\to \A{B}_n \quad (n\in\SN{A})
\]
This fact will be used often in what follows, without explicit mention.  Now, fix a pointclass $\PC{\Gamma}$ and a number $\e \ge 0$.  We define
\begin{align*}
  \SN{A}\in\SSN{I}^\e & \iff \mbox{there is a strict algebraic $\e$-lift of $\vp\rs\SN{A}$} \\
  \SN{A}\in\SSN{I}^\e(\PC{\Gamma}) & \iff \mbox{there is a $\PC{\Gamma}$-measurable $\e$-lift of $\vp\rs \SN{A}$} \\
  \SN{A}\in\SSN{I}_\sigma^\e(\PC{\Gamma}) & \iff \mbox{there is a sequence $L_k$ ($k\in\NN$) of $\PC{\Gamma}$-measurable functions with} \\
      & \qquad \forall x\in\PA{F}\rs\SN{A}\;\;\exists k\in\NN\quad \norm{\vp(\quo{x}) - \quo{L_k(x)}} \le \e\norm{\quo{x}}
\end{align*}
Our ultimate goal is to show that $\SSN{I}^0 = \SSN{P}(\NN)$.  The pointclasses we will be concerned with are $\PC{BP},\PC{H},\PC{C}$, and $\PC{\Delta^1_1}$, consisting of those sets with the Baire property, the Haar-measurable sets, the $\PC{C}$-measurable sets (see~\cite[\S{29.D}]{Kechris.CDST}), and the Borel sets, respectively.
\begin{prop}
  \label{ideals}
  Let $\e\ge 0$ and let $\PC{\Gamma}$ be a pointclass; then each $\SSN{I}^\e$, $\SSN{I}^\e(\PC{\Gamma})$, and $\SSN{I}_\sigma^\e(\PC{\Gamma})$ is an ideal containing the finite sets, and
  \[
    \SSN{I}^\e(\PC{BP}) \subseteq \SSN{I}^{8\e}(\PC{\Delta}^1_1)\qquad\mbox{and}\qquad \SSN{I}_\sigma^\e(\PC{BP}) \subseteq \SSN{I}_\sigma^{8\e}(\PC{\Delta}^1_1)
  \]
  Finally,
  \[
    \SSN{I}^0 = \SSN{I}^0(\PC{BP}) = \bigcap_{\e > 0} \SSN{I}^\e(\PC{BP})
  \]
\end{prop}
\begin{proof}
  Clearly each $\SSN{I}^\e$, $\SSN{I}^\e(\PC{\Gamma})$, and $\SSN{I}_\sigma^\e(\PC{\Gamma})$ is hereditary and contains the finite sets.  To see that e.g. $\SSN{I}_\sigma^\e(\PC{\Gamma})$ is closed under finite unions, consider $\SN{A},\SN{B}\in\SSN{I}_\sigma^\e(\PC{\Gamma})$.  Let $L_m^\SN{A}$ ($m\in\NN$) and $L_n^\SN{B}$ ($n\in\NN$) be $\PC{\Gamma}$-measurable functions which witness that $\SN{A},\SN{B}\in\SSN{I}_\sigma^\e(\PC{\Gamma})$ respectively.  Put, for all $x\in \PA{F}\rs (\SN{A}\cup\SN{B})$ and $m,n\in\NN$,
  \[
    L_{mn}^{\SN{A}\cup\SN{B}}(x) = L_m^\SN{A}(x\rs\SN{A}) + L_n^\SN{B}(x\rs(\SN{B}\sm\SN{A}))
  \]
  Then this family of functions witnesses $\SN{A}\cup\SN{B}\in\SSN{I}_\sigma^\e(\PC{\Gamma})$.

  To see that $\SSN{I}^\e(\PC{BP}) \subseteq \SSN{I}^{8\e}(\PC{\Delta}^1_1)$, let $\SN{A}\in\SSN{I}^\e(\PC{BP})$ and fix a Baire-measurable $\e$-lift $L^C$ of $\vp$ on $\SN{A}$.  Recall that the unitary group $\U$ of $\PA{F}\rs\SN{A}$ is a Polish group; hence as $L^C$ is Baire-measurable, there is a dense $G_\delta$ set $\X\subseteq\U$ on which $L^C$ is continuous.  Let
  \[
    \R = \set{(u,v)\in\U\times\U}{v\in\X\cap u^*\X}
  \]
  Then $\R$ is Borel, and has comeager sections, hence by~\cite[Theorem~8.6]{Kechris.CDST} it has a Borel-measurable uniformization $S$.  It follows that the function
  \[
    u\mapsto L^C(uS(u))L^C(S(u)^*)
  \]
  is Borel-measurable, and a $2\e$-lift of $\vp$ on $\U$.  Now, it is a standard fact that there are continuous functions $T_1,T_2,T_3,T_4 : \PA{F}\rs\SN{A}\to\U$ such that
  \[
    \sum_i T_i(x) = x
  \]
  for all $x\in\PA{F}\rs\SN{A}$.  Composing these maps with the function on $\U$ defined above, we obtain an $8\e$-lift of $\vp$ on all of $\PA{F}\rs\SN{A}$.  The inclusion $\SSN{I}_\sigma^\e(\PC{BP}) \subseteq \SSN{I}_\sigma^{8\e}(\PC{\Delta}^1_1)$ follows from similar reasoning.

  The equality $\SSN{I}^0 = \SSN{I}^0(\PC{BP})$ follows from Theorem~\ref{definable->lifts} and the fact that a Baire-measurable function is continuous on a dense $G_\delta$.  Clearly,
  \[
    \SSN{I}^0(\PC{BP}) \subseteq \bigcap_{\e > 0} \SSN{I}^\e(\PC{BP})
  \]
  Now to see the other inclusion, note by the above that
  \[
    \bigcap_{\e > 0} \SSN{I}^\e(\PC{BP}) = \bigcap_{\e > 0} \SSN{I}^\e(\PC{\Delta}^1_1)
  \]
  So, suppose that for each $\e > 0$, $\SN{A}\in\SSN{I}^\e(\PC{\Delta}^1_1)$, and let $L^\e$ be a Borel-measurable function witnessing this.  Put
  \[
    \Lambda = \set{(x,y)\in (\PA{F}_1\rs \SN{A})\times (\PA{B}_1\rs\SN{A})}{\forall \e > 0 \; \norm{\quo{L^\e(x) - y}}\le \e}
  \]
  Then, $\Lambda$ is Borel, and hence has a $\PC{C}$-measurable uniformization by the Jankov-von Neumann theorem.  This uniformization is clearly a lift of $\vp$ on $\SN{A}$.  Thus $\SN{A}\in\SSN{I}^\e(\PC{BP})$ as required.
\end{proof}

The two results below are simple modifications of \cite[Lemma~7.6]{Farah.CO} and~\cite[Proposition~7.7]{Farah.CO}, respectively; we include proofs here for completeness, but we make no claims to their originality.
\begin{lemma}
  \label{haar.expansion}
  Suppose $\vp\rs\SN{A}$ has a Borel-measurable $\e$-lift on $S$, where
  \[
    S\subseteq \prod_{n\in\SN{A}} \U(\A{F}_n) = \U
  \]
  is some set with positive Haar measure.  Then $\vp$ has a Borel-measurable $2\e$-lift on all of $\U$.
\end{lemma}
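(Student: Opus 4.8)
The plan is to promote the lift on $S$ to a lift defined on all of $\U = \prod_{n\in\SN{A}}\U(\A{F}_n)$ by covering $\U$ with finitely many translates of a positive-measure subset of $S$; this is possible by the Steinhaus theorem together with the compactness of $\U$, and the finitely many ``correction'' unitaries that it introduces are exactly the source of the factor $2$.

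Write $\mu$ for the Haar probability measure on the compact Polish group $\U$. First I would use inner regularity of $\mu$ to replace $S$ by a compact subset $S_0\subseteq S$ with $\mu(S_0)>0$; the restriction of the given Borel-measurable $\e$-lift $L$ to $S_0$ is still a Borel-measurable $\e$-lift of $\vp\rs\SN{A}$, and we may as well assume $L$ maps the unit ball of $\PA{F}\rs\SN{A}$ into the unit ball of $\prod_{n\in\SN{A}}\A{B}_n$. By the Steinhaus theorem, $S_0 S_0^{-1}$ contains an open neighbourhood of the identity of $\U$, so by compactness finitely many left translates $h_1\bigl(S_0 S_0^{-1}\bigr),\dots,h_m\bigl(S_0 S_0^{-1}\bigr)$ already cover $\U$. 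Equivalently, for every $u\in\U$ there are $i\le m$ and $t\in S_0$ with $h_i^{-1}ut\in S_0$.

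Next, for each $i\le m$ I set
\[
  R_i = \set{(u,t)\in\U\times S_0}{h_i^{-1}ut\in S_0},
\]
which is closed in $\U\times\U$ with compact vertical sections, hence admits a Borel uniformization. Let $D_i\subseteq\U$ be its (closed) projection to the first coordinate and $\tau_i : D_i\to S_0$ a Borel map with $h_i^{-1}u\tau_i(u)\in S_0$ for all $u\in D_i$; by the previous paragraph $\bigcup_{i\le m}D_i=\U$, so $E_1=D_1$ and $E_i=D_i\sm\bigcup_{j<i}D_j$ form a finite Borel partition of $\U$. Fixing once and for all elements $c_i\in\prod_{n\in\SN{A}}\A{B}_n$ with $\quo{c_i}=\vp(\quo{h_i})$, I then define
\[
  L'(u) = c_i\, L\bigl(h_i^{-1}u\tau_i(u)\bigr)\, L\bigl(\tau_i(u)\bigr)^* \qquad (u\in E_i).
\]
On each piece of the partition $L'$ is obtained from the Borel map $L$ by composing with fixed translations and continuous group operations, so $L'$ is Borel. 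For $u\in E_i$ put $a=\quo{L(h_i^{-1}u\tau_i(u))}$, $b=\quo{L(\tau_i(u))}$, $\alpha=\vp(\quo{h_i})^*\vp(\quo u)\vp(\quo{\tau_i(u)})$ and $\beta=\vp(\quo{\tau_i(u)})$; since $L$ is an $\e$-lift and $\vp$ a $*$-homomorphism we have $\norm{a-\alpha}\le\e$, $\norm{b-\beta}\le\e$, with $\norm a\le 1$ and $\alpha,\beta$ unitary, while $\quo{L'(u)}=\vp(\quo{h_i})\,ab^*$ and $\vp(\quo u)=\vp(\quo{h_i})\,\alpha\beta^*$. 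Hence
\[
  \norm{\quo{L'(u)} - \vp(\quo u)} = \norm{ab^* - \alpha\beta^*} \le \norm{a}\,\norm{b^*-\beta^*} + \norm{a-\alpha}\,\norm{\beta^*} \le 2\e,
\]
so $L'$ is a Borel-measurable $2\e$-lift of $\vp\rs\SN{A}$ on all of $\U$.

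The essential point — and the only real obstacle — is the passage from a set of merely positive measure to a global statement: two translates of such a set need not intersect, so the naive recipe $u\mapsto L(uw)L(w)^*$ with $w\in S\cap u^{-1}S$ breaks down (this is where the Baire-category version, exploiting that two comeager sets meet, is easier). The Steinhaus theorem and the compactness of $\U$ are precisely what is needed to replace the single missing translate by a finite cover, and the constant $2\e$ is the error accumulated from the two applications of the $\e$-lift hypothesis, the correction terms $c_i$ being exact lifts.
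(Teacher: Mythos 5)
Your proof is correct, and the overall strategy---promote a lift defined on a positive-measure set to all of $\U$ by translating and patching---is the same as the paper's. But the mechanism is genuinely different in two respects. (1) The paper does not invoke Steinhaus directly; it first chooses a basic open set $U$ (depending on only finitely many coordinates) with $\mu(S\cap U) > \mu(U)/2$ and translates $S$ by a finite set $F$ of \emph{cylinder} elements to get $FU = \U$ and $\mu(FS) > 1/2$. Because each $v\in F$ satisfies $\pi(v)=\pi(1)$, this intermediate map $L'(u)=L(v^*u)$ is still an $\e$-lift with \emph{no} correction term; the product structure of $\U$ is doing the work that your $c_i$'s do. (2) The paper then goes from $FS$ to $\U$ by a single uniformization of $\Lambda = \set{(u,v)\in\U\times FS}{uv^*\in FS}$, whose sections are automatically nonempty since $\mu(FS)>1/2$---again a measure-theoretic substitute for Steinhaus. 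Your version uses Steinhaus and a finite cover directly, at the cost of introducing the fixed representatives $c_i\in\quomap^{-1}(\vp(\quo{h_i}))$ and a finite Borel partition, but it works in any compact Polish group without reference to the reduced-product structure, which is a mild gain in generality. One small point to flag: the inequality $\norm{a}\,\norm{b^*-\beta^*}+\norm{a-\alpha}\,\norm{\beta^*}\le 2\e$ needs $\norm{a}\le 1$, but $a=\quo{L(h_i^{-1}u\tau_i(u))}$ satisfies only $\norm{a}\le 1+\e$ a priori; your remark that one may assume $L$ maps into the unit ball takes a word to justify (e.g.\ replace $L$ by the Borel map $y\mapsto y'$ with $\proj{n}{y'} = \proj{n}{y}/\max(1,\norm{\proj{n}{y}})$, which does not move $\quo{y}$ when $\norm{\quo{y}}\le 1$, and otherwise costs an extra $\e$). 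The paper's final estimate has the same slack; since the surrounding results (Proposition~\ref{ideals}, Proposition~\ref{sigma->one}) absorb any fixed multiple of $\e$, this is cosmetic.
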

\begin{proof}
  Let $L : S\to \PA{B}\rs\SN{A}$ be a Borel-measurable $\e$-lift of $\vp$.  By Luzin's theorem, we may assume that $S$ is compact and $L$ is continuous on $S$.  Let $U$ be a basic open subset of $\U$ such that $\mu(S\cap U) > \mu(U) / 2$.  Then there are $k\in\NN$ and a finite $F$ contained in
  \[
    \prod_{n\in\SN{A}\cap k} \U(\A{F}_n)
  \]
  such that $FU = \U$.  It follows that $\mu(FS) > 1/2$.  Now define $L' : FS \to \PA{B}\rs\SN{A}$ by letting $L'(u) = L(v^* u)$ whenever $v$ is the first member of $F$ such that $v^*u \in S$.  Then $L'$ is continuous and an $\e$-lift of $\vp$ on $FS$ (noting that for each $v\in F$, $\pi(v) = \pi(1)$).  Now let
  \[
    \Lambda = \set{(u,v)\in \U\times FS}{ uv^*\in FS }
  \]
  Then the section of $\Lambda$ over a given $u\in\U$ is exactly $FS\cap u(FS)^*$, which has positive Haar measure since $\mu(FS) > 1/2$.  By \cite[Theorem~8.6]{Kechris.CDST}, it follows that $\Lambda$ has a Borel-measurable uniformization $T : \U\to \PA{B}\rs\SN{A}$.  Then the map
  \[
    u\mapsto L'(u T(u)^*) L'(T(u))
  \]
  defines a $2\e$-lift of $\vp$ on all of $\U$.
\end{proof}
\begin{prop}
  \label{sigma->one}
  Let $\e > 0$ be given.  If $\SN{A}\in \SSN{I}_\sigma^\e(\PC{H})$ is infinite and $\SN{A} = \bigcup_k \SN{A}_k$ is a partition of $\SN{A}$ into infinite sets, then there is some $k$ for which $\SN{A}_k\in\SSN{I}^{4\e}(\PC{\Delta}^1_1)$.  
\end{prop}

\begin{proof}
  Let $U_n$ be the unitary group of $\A{F}_n$.  Then
  \[
    \U_k = \prod_{n\in\SN{A}_k} U_n\quad\mbox{and}\quad \W_k = \prod_{\ell\ge k} \U_\ell
  \]
  are compact groups, and clearly
  \[
    \U \simeq \prod_k \U_k = \W_0
  \]
  We thus view each $\U_k$ and $\W_k$ as a compact subgroup of $\U$.
  
  Fix Borel functions $L_i$ $(i\in\NN)$ witnessing $\SN{A}\in\SSN{I}_\sigma^\e(\PC{H})$.  Assume, for sake of contradiction, that no $\SN{A}_k$ is a member of $\SSN{I}^{4\e}(\PC{\Delta}^1_1)$.  We will construct compact sets $\V_k\subseteq\W_k$ of positive measure (using the normalized Haar measure $\mu_k$ on $\W_k$), and elements $u_k$ of $\U_k$, such that
  \begin{enumerate}
    \item  $u_k \V_{k+1} \subseteq \V_k$, and
    \item  for all $v\in \V_{k+1}$,
    \[
      \norm{\quo{(L_k (u_0\cdots u_k v) - L(u_k))\rs \SN{A}_k}} > \e
    \]
  \end{enumerate}
  Given such sequences, note that $\V_k' = u_0\cdots u_{k-1} \V_k$ is a decreasing sequence of nonempty, compact sets in $\U$.  Thus $u_\infty = \prod_k u_k$ is a member of their intersection.  Since $u_\infty \in \PA{F}\rs\SN{A}$, there is some $k$ such that
  \[
    \norm{\quo{L_k(u_\infty) - L(u_\infty)}} \le \e
  \]
  But we have
  \[
    u_\infty = u_0\cdots u_k \prod_{\ell > k} u_\ell
  \]
  and $\prod_{\ell > k} u_\ell \in \V_{k+1}$, so by the construction,
  \[
    \norm{\quo{(L_k(u_\infty) - L(u_k))\rs \SN{A}_k}} > \e
  \]
  Since $\quo{L(u_\infty)\rs \SN{A}_k} = \quo{L(u_k)\rs\SN{A}_k}$ this provides the necessary contradiction.  Now we proceed to the construction of $u_k$ and $\V_k$.

  Suppose we are given $u_0,\ldots,u_{k-1}$ and $\V_{k-1}$ satisfying the above conditions.  Since $\V_{k-1}$ has positive Haar measure, we may find compact sets $S\subseteq \U_{k-1}$ and $T\subseteq \W_k$, each with positive measure (under their respective Haar measures), such that
  \[
    \forall x\in S\quad \mu_k\set{y\in T}{(x,y)\in\V_{k-1}} > \mu_k(T)/2
  \]
  Define $\Xi\subseteq \U_k\times\W_k\times (\PA{B}_1\rs\SN{A}_k)$ and $\Lambda\subseteq S\times \prod \U(\A{B}_n)$ by
  \begin{gather*}
    (x,y,z)\in\Xi \iff \norm{\quo{L_k(u_0\cdots u_{k-1}\cdot x\cdot y)\rs \SN{A}_k - z}} \le \e \\
    (x,z)\in \Lambda\iff \mu_k\set{y\in T}{(x,y,z)\in \Xi} > \mu_k(T)/2
  \end{gather*}
  Then $\Xi$ and $\Lambda$ are both Borel.  Suppose first that for every $x\in S$ there is some $z$ with $(x,z)\in \Lambda$.  Then by the Jankov-von Neumann uniformization theorem there is a $\PC{C}$-measurable function $f : S\to \prod \U(\A{B}_n)$ uniformizing $\Lambda$.  Since $S$ has positive measure, by Lemma~\ref{haar.expansion} $f$ cannot be a $2\e$-lift of $\vp$ on $S$, since then $\vp$ would have a (Borel-measurable) $4\e$-lift on $\SN{A}_k$, contradicting our starting assumption.  So we may find some $u_k\in S$ such that there is no $z$ with $(u_k,z)\in\Lambda$, and in particular $(u_k,L(u_k))\not\in\Lambda$.  It follows that the set
  \[
    R = \set{y\in T}{(u_k,y,L(u_k))\in \Xi\;\land\; (u_k,y)\in \V_{k-1}}
  \]
  has positive measure.  Taking $\V_k$ to be some compact subset of $R$ with positive measure finishes the construction.
\end{proof}

Recall that a family $\SSN{A}\subseteq\SSN{P}(\NN)$ is \emph{almost-disjoint} (or \emph{a.d.}) if for all distinct $\SN{A},\SN{B}\in\SSN{A}$, $\SN{A}\cap\SN{B}$ is finite.  An a.d. family $\SSN{A}$ is \emph{treelike} if there is a bijection $t : \NN\to 2^{<\omega}$ such that for all $\SN{A}\in\SSN{A}$, and all $n,m\in\SN{A}$, $t(n)\subseteq t(m)$ or $t(m) \subseteq t(n)$.  Treelike families are called \emph{neat} in~\cite{Velickovic.OCAA}.
\begin{lemma}
  \label{can.has.trees}
  If $\SSN{A}$ is a treelike, a.d. family, then $\SSN{A}\sm\SSN{I}_\sigma^\e(\PC{C})$ is countable for each $\e > 0$.
\end{lemma}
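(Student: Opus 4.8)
The plan is to argue by contradiction using $\TA$, following the templates of \cite{Velickovic.OCAA} and \cite{Farah.AQ}. Fix $\e > 0$ and suppose $\SSN{B} := \SSN{A}\sm\SSN{I}_\sigma^\e(\PC{C})$ is uncountable. Since the finite sets lie in $\SSN{I}_\sigma^\e(\PC{C})$ we may assume every member of $\SSN{B}$ is infinite, and since $\SSN{A}$ is treelike we fix a bijection $t : \NN\to 2^{<\omega}$ under which each $\SN{A}\in\SSN{B}$ is carried to an infinite chain, hence determines a distinct point $b_\SN{A}\in 2^\omega$; thus $\SN{A}\mapsto b_\SN{A}$ injects $\SSN{B}$ into $2^\omega$. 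Fix also a lift $L : \PA{F}\to\PA{B}$ of $\vp$, and recall that, by the reduction already made in this section, any strict algebraic lift of $\vp\rs\SN{A}$ is determined by $*$-homomorphisms $\A{F}_n\to\A{B}_n$. The point of the hypothesis $\SN{A}\in\SSN{B}$ is a strong non-definability: if some lift of $\vp\rs\SN{A}$ were continuous on a dense $G_\delta$, then Theorem~\ref{definable->lifts} (applied with the AF algebra $\bigoplus_{n\in\SN{A}}\A{B}_n$ in the role of $\A{A}$) would give $\SN{A}\in\SSN{I}^0\subseteq\SSN{I}_\sigma^\e(\PC{C})$, which is impossible; so $\SN{A}\notin\SSN{I}^0$, and hence by Proposition~\ref{ideals} $\SN{A}\notin\SSN{I}^\delta(\PC{BP})$ for some $\delta > 0$. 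After passing to an uncountable subfamily we may fix a single such $\delta_0$ working for every $\SN{A}\in\SSN{B}$.

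Next I would build a separable metric space $\mathcal{X}$ and an open coloring on it. Let $X_n$ be a finite, $\e_n$-dense subset of the unit ball of $\A{F}_n$ (as in the proof of Theorem~\ref{definable->lifts}), and take $\mathcal{X}$ to consist of the pairs $(b_\SN{A}, x)$ with $\SN{A}\in\SSN{B}$ and $x\in\prod_{n\in\SN{A}} X_n$, regarded as a subspace of the Polish space $2^\omega\times\prod_n X_n$ (with the product topology, which on bounded sets agrees with the strict topology). Define a partition $[\mathcal{X}]^2 = K_0\cup K_1$ by putting a pair $\{(b,x),(b',x')\}$ with $b\neq b'$ into $K_0$ when the behaviour of $L$ near $x$ and near $x'$ is \emph{visibly incompatible}: already on finitely many coordinates, no single candidate value (drawn from a fixed countable dense set of possible local lifts) can witness that both $\vp(\quo{x})$ and $\vp(\quo{x'})$ are approximated to within the tolerance derived from $\delta_0$. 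Because such a failure is exhibited on a finite coordinate set and by a single member of a countable set, $K_0$ is open. Getting this coloring right --- encoding enough of $\vp$ through the point-norm structure of the finite-dimensional blocks $\A{F}_n$ and the approximate multiplicativity of $L$ so that incompatibility is genuinely detectable, while keeping $K_0$ open --- is the technical heart of the construction.

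Now apply $\TA$ to $\mathcal{X}$. In the $\sigma$-$K_1$-homogeneous case $\mathcal{X} = \bigcup_k \mathcal{X}_k$, on each piece the local data are pairwise compatible, so the Borel relation ``$z$ is a local lift value compatible with $(b,x)\in \mathcal{X}_k$'' has nonempty sections and, by the Jankov--von Neumann theorem, admits a $\PC{C}$-measurable uniformization; amalgamating these (and replacing approximate local maps by genuine $*$-homomorphisms where needed, via Proposition~\ref{ulam.af}, exactly as in the proof of Theorem~\ref{definable->lifts}) produces $\PC{C}$-measurable functions $L_k$ which, for every $\SN{A}\in\SSN{B}$, furnish for each $x\in\PA{F}\rs\SN{A}$ some $k$ with $\norm{\vp(\quo{x}) - \quo{L_k(x)}} \le \e\norm{\quo{x}}$ (here one passes from control on $\prod_{n\in\SN{A}}X_n$ to all of $\PA{F}\rs\SN{A}$ by the nearest-rational-point device of Proposition~\ref{ulam.af}). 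This says $\SN{A}\in\SSN{I}_\sigma^\e(\PC{C})$ for every $\SN{A}\in\SSN{B}$ --- contradicting the choice of $\SSN{B}$ (the constants are made to match by running the coloring with a slightly smaller tolerance at the outset, or by invoking the estimates of Proposition~\ref{ideals}).

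Hence $\TA$ must instead yield an uncountable $K_0$-homogeneous set $H\subseteq\mathcal{X}$, and refuting this is, I expect, the main obstacle. An uncountable $H$ gives $\aleph_1$-many pairs $(b_\SN{A},x)$ that are pairwise visibly incompatible; since the branches $b_\SN{A}$ come from a treelike \emph{almost-disjoint} family, the finite coordinate sets witnessing these incompatibilities can --- after passing to an uncountable subfamily and applying $\MA$ to extract a $\Delta$-system-like refinement --- be arranged to be pairwise disjoint off a common finite root. Amalgamating the incompatible pieces along such a refinement then produces a single element $x^*\in\PA{F}$ (or a sequence of such elements) on which $L$, and therefore $\vp$, is forced to violate one of the C*-algebra operations in the limit --- mirroring the vanishing-defect argument in the proof of Theorem~\ref{definable->lifts} --- contradicting that $\vp$ is a $*$-homomorphism. (Alternatively, one can extract from $H$ an $\aleph_1$-sized unbounded family in $(\NN^\NN, <^*)$, which is impossible because $\TA$ implies $\bb = \aleph_2$.) Combining the two horns of $\TA$, $\SSN{B}$ cannot be uncountable, which is the assertion of the Lemma.
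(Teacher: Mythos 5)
Your proposal has the right general flavor (a $\TA$ argument built on an open coloring of a space of pairs $(\SN{A},x)$), but it leaves the actual coloring undefined — you acknowledge this yourself — and the rest of the sketch diverges from a workable argument in ways that would be hard to repair.

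The most important structural issue is in the $\sigma$-$K_1$-homogeneous horn. You claim that the decomposition $\mathcal{X} = \bigcup_k \mathcal{X}_k$ directly produces, via Jankov--von Neumann, $\PC{C}$-measurable $L_k$ witnessing $\SN{A}\in\SSN{I}_\sigma^\e(\PC{C})$ for \emph{every} $\SN{A}\in\SSN{B}$. This cannot work as stated. The Borel relations one actually feeds to Jankov--von Neumann are built from countable dense subsets $D_p\subseteq\mathcal{X}_p$, and a priori the members of $\SSN{A}$ appearing in those dense sets — a countable subfamily $\SSN{D}$ — cannot be handled: if $\SN{C}\in\SSN{D}$ then some $(\SN{\bar A},\bar x)\in D_p$ has $\SN{\bar A}$ sharing infinitely many coordinates with $\SN{C}$, and the approximation breaks down. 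The paper gets around this by (i) working with the space of pairs $(\SN{A},x)$ where $\SN{A}$ is an arbitrary infinite \emph{subset} of a member of $\SSN{A}$ (not a full member, as in your $\mathcal{X}$), (ii) for each $\SN{C}\notin\SSN{D}$ building an interval partition $\SN{C}=\SN{C}_0\cup\SN{C}_1$ tuned so that any $(\SN{A},x)$ with $\SN{A}\subseteq\SN{C}_i$ can be approximated by members of $D_p$ meeting $\SN{A}$ only on an initial segment, and (iii) only then defining the Borel relation $\Lambda_p$ and uniformizing. None of this appears in your sketch, and without it the countable exceptional set $\SSN{D}$ is unaccounted for. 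In fact this exceptional set is precisely the countable set of the Lemma's conclusion: the paper's argument is \emph{not} by contradiction; the $\sigma$-homogeneous horn is the one that actually holds, and it yields the Lemma directly.

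The $K_0$-homogeneous horn is also off. You propose to use $\MA$ and a $\Delta$-system refinement to amalgamate "incompatible pieces" into a single $x^*$ violating the C*-operations, or alternatively to extract an unbounded family in $(\NN^\NN,<^*)$. The paper does neither, and neither route is easy to see through: the coloring does not obviously produce an unbounded family, and there is no need for $\MA$ in this Lemma at all. The paper's refutation is a clean pigeonhole: condition (b) of the coloring forces all $x$'s in a $K_0$-homogeneous set $Y$ to cohere into a single $y$ (so $\quo{x}=\quo{y\rs a}$ and hence $\quo{L(x)}=\quo{L(y\rs a)}$), and condition (c) then produces two elements of $Y$ that are simultaneously $\e/2$-close to the same $L(y)$-data off a finite $\bar n$ and $\e$-close on $\bar n$, which forces them into $K_1$ — a contradiction. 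The specific form of conditions (b) and (c) (pointwise closeness $<1/(n+1)$ of the $x$'s on the overlap, and a single coordinate where the $L$-values differ by $>\e/2$) is exactly what makes this pigeonhole work, and your "visibly incompatible" placeholder does not encode it.

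In short: the coloring (which you flag as the technical heart) and the interval-partition/dense-subset machinery in the $\sigma$-homogeneous horn are both missing, and those are precisely the load-bearing parts of the proof. The preliminary reductions you make — fixing $\delta_0$, identifying $\SSN{B}$ with branches in $2^\omega$ — are fine but peripheral, and the appeal to $\MA$ is not needed for this Lemma.
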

\begin{proof}
   Fix $\e > 0$, and let $X$ consist of the pairs $(\SN{A},x)$ where $\SN{A}$ is an infinite subset of some member $\tau(\SN{A})$ of $\SSN{A}$, and $x$ is in the unit ball of $\PA{F}\rs\SN{A}$.  Notice that $\tau(\SN{A})$ is unique, and $\tau$ as a map $\SSN{A}\to 2^\omega$ is continuous, since $\SSN{A}$ is treelike.  We define a coloring $[X]^2 = K_0\cup K_1$ by placing $\{(\SN{A},x),(\SN{\bar{A}},\bar{x})\}\in K_0$ if and only if
  \begin{enumerate}
    \item  $\tau(\SN{A})\neq \tau(\SN{\bar{A}})$,
    \item  for all $n\in \SN{A}\cap\SN{\bar{A}}$, $\norm{\proj{n}{x} - \proj{n}{\bar{x}}} < 1 / (n+1)$, and
    \item  there is an $n\in \SN{A}\cap\SN{\bar{A}}$ such that $\norm{\proj{n}{L(x)} - \proj{n}{L(\bar{x})}} > \e/2$.
  \end{enumerate}
  It follows that $K_0$ is open in the topology on $X$ obtained by identifying $(\SN{A},x)\in X$ with $(\SN{A},x,L(x))$, a member of the Polish space
  \[
    \pow(\NN)\times \PA{F}_1\times \PA{B}_1
  \]

  \begin{claim}
    There is no uncountable $Y\subseteq X$ such that $[Y]^2\subseteq K_0$.
  \end{claim}
  \begin{proof}
    Suppose for sake of contradiction that $Y$ is uncountable and $[Y]^2\subseteq K_0$.  Let $\SN{D} = \bigcup\set{\SN{A}}{(\SN{A},x)\in Y}$, and choose $y\in\PA{F}\rs d$ such that for all $n\in d$ there is some $(\SN{A},x)\in Y$ with $n\in \SN{A}$ and $\proj{n}{y} = \proj{n}{x}$.  Since $Y$ is $K_0$-homogeneous, it follows that for all $(\SN{A},x)\in Y$,
    \[
      \forall n\in a\quad \norm{\proj{n}{x} - \proj{n}{y}} < \frac{1}{n+1}
    \]
    In particular, $\quo{x} = \quo{y\rs a}$ for all $(\SN{A},x)\in Y$, and hence $\quo{L(x)} = \quo{L(y\rs a)}$ for all $(\SN{A},x)\in Y$.  Since $Y$ is uncountable we may find an $\bar{n}\in\NN$ such that for uncountably many $(\SN{A},x)\in Y$, we have
    \[
      \forall n\in a\sm\bar{n}\quad \norm{\proj{n}{L(x)} - \proj{n}{L(y)}} \le \e / 2
    \]
    By the separability of $B_n$ for $n < \bar{n}$, there are distinct $(\SN{A},x),(\SN{\bar{A}},\bar{x})\in Y$, both satisfying the above, such that
    \[
      \forall n < \bar{n} \quad \norm{\proj{n}{L(x)} - \proj{n}{L(\bar{x})}} \le \e
    \]
    Then $\{(\SN{A},x),(\SN{\bar{A}},\bar{x})\}\in K_1$, a contradiction.
  \end{proof}
  By $\TA$, there is a countable cover $X_p$ ($p\in\NN$) of $X$ by $K_1$-homogeneous sets.  Let $D_p$ be a countable, dense subset of $X_p$ for each $p$, and let
  \[
    \SSN{D} = \set{\tau(\SN{\bar{A}})}{p\in\NN\land (\SN{\bar{A}},\bar{x})\in D_p}
  \]
  To prove the lemma it will suffice to show that $\SSN{A}\sm\SSN{D}\subseteq\SSN{I}_\sigma^\e(\PC{C})$.
  \begin{claim}
    Let $\SN{C}\in\SSN{A}\sm\SSN{D}$.  Then there is a partition $\SN{C} = \SN{C}_0\cup \SN{C}_1$ such that for all $p\in\NN$ and all $(\SN{A},x)\in X_p$, if $\SN{A}\subseteq \SN{C}_i$ for some $i$ then for all $k$ there is $(\SN{\bar{A}},\bar{x})\in D_p$ with
    \begin{enumerate}
      \item\label{close.a}  $\SN{A}\cap k = \SN{\bar{A}}\cap k$,
      \item\label{close.x}  for all $n\in \SN{A}\cap\SN{\bar{A}}$, $\norm{\proj{n}{x} - \proj{n}{\bar{x}}} < 1 / (n+1)$.
    \end{enumerate}
  \end{claim}
  \begin{proof}
    For each $k\in\NN$, let $E_k$ be a finite subset of $X$ such that for all $p < k$ and $(\SN{A},x)\in X_p$, there is some $(\SN{\bar{A}},\bar{x})\in D_p^m\cap E_k$ satisfying~\eqref{close.a} and the following restricted form of~\eqref{close.x};
    \[
      \forall n\in \SN{A}\cap\SN{\bar{A}}\cap k\quad \norm{\proj{n}{x} - \proj{n}{\bar{x}}} < \frac{1}{n+1}
    \]
    This is possible by density of $D_p$ in $X_p$ and the fact that $\PA{F}\rs k$ is finite-dimensional (and hence has a totally-bounded unit ball).  Note that for each $(\SN{\bar{A}},\bar{x})\in E_k$, the set $\SN{C}\cap\SN{\bar{A}}$ is finite.  Let $k^+$ be minimal such that for all $(\SN{\bar{A}},\bar{x})\in E_k$, $\SN{C}\cap\SN{\bar{A}}\subseteq k^+$.  Set $k_0 = 0$ and $k_{i+1} = k_i^+$ for each $i$, and
    \[
      \SN{C}_0 = \bigcup_i \SN{C}\cap [k_{2i},k_{2i+1})\qquad \SN{C}_1 = \bigcup_i \SN{C}\cap [k_{2i+1},k_{2i+2})
    \]
    The claim follows.
  \end{proof}

  Define a set $\Lambda_p \subseteq (\PA{F}\rs \SN{C}_0) \times (\PA{B}\rs \SN{C}_0)$ by placing $(x,y)\in\Lambda_p$ if and only if for every $k\in\NN$ there is some $(\SN{\bar{A}},\bar{x})\in D_p$ such that conditions~\eqref{close.a} and~\eqref{close.x} hold (with $\SN{C}_0$ replacing $\SN{A}$), and moreover
  \[
    \forall n < k\quad \norm{\proj{n}{y} - \proj{n}{L(\bar{x})}} \le \e/2
  \]
  Clearly, $\Lambda_p$ is Borel (in the usual topology), and if $(\SN{C}_0,x)\in X_p$ then by the above claim and the $K_1$-homogeneity of $X_p$, $(x,L(x))\in \Lambda_p$.  Moreover, if $(x,y)\in \Lambda_p$, then
  \[
    \forall n\in \SN{C}_0 \norm{\proj{n}{L(x)} - \proj{n}{y}} \le \e
  \]
  Let $L_p$ be a $\PC{C}$-measurable uniformization of $\Lambda_p$ for each $p\in\NN$.  Then, since $X = \bigcup_p X_p$, the sequence $L_p$ ($p\in\NN$) is a witness to the fact that $\SN{C}_0\in\SSN{I}_\sigma^\e(\PC{C})$.  Similarly, $\SN{C}_1$ is in $\SSN{I}_\sigma^\e(\PC{C})$, and hence so is $\SN{C} = \SN{C}_0\cup \SN{C}_1$.
\end{proof}

\begin{prop}
  \label{ccc/fin}
  Every uncountable, a.d. family $\SSN{B}\subseteq\SSN{P}(\NN)$ meets $\SSN{I}^0$.
\end{prop}
\begin{proof}
  Suppose for sake of contradiction that $\SSN{B}$ is an uncountable a.d. family disjoint from $\SSN{I}^0$.  By Proposition~\ref{ideals}, we may assume that for some $\e > 0$, $\SSN{B}$ is disjoint from $\SSN{I}^\e(\PC{\Delta}^1_1)$.  By a standard application of $\MA$, we may find an uncountable, a.d. family $\SSN{A}'$ such that every $\SN{A}\in\SSN{A}'$ almost-contains infinitely many members of $\SSN{B}$.  Moreover, using $\MA$ with~\cite[Lemma~2.3]{Velickovic.OCAA}, there is an uncountable $\SSN{A} \subseteq \SSN{A}'$ and, for each $\SN{A}\in\SSN{A}$, a partition $\SN{A} = \SN{A}_0\cup \SN{A}_1$, such that for each $i < 2$ the family
  \[
    \SSN{A}_i = \set{\SN{A}_i}{\SN{a}\in\SSN{A}}
  \]
  is treelike.  By Lemma~\ref{can.has.trees}, there are uncountably many $\SN{A}\in\SSN{A}$ such that $\SN{A}_0\in\SSN{I}_\sigma^{\e/4}(\PC{C})$; and by another application of Lemma~\ref{can.has.trees}, there is then some $\SN{A}\in\SSN{A}$ such that both $\SN{A}_0$ and $\SN{A}_1$ are members of $\SSN{I}_\sigma^{\e/4}(\PC{C})$, and hence their union $\SN{A}$ is also a member of $\SSN{I}_\sigma^{\e/4}(\PC{C})$.  By Proposition~\ref{sigma->one}, since $\SN{A}$ almost-contains infinitely many members of $\SSN{B}$, there must be some $\SN{B}\in\SSN{B}\cap\SSN{I}^\e(\PC{\Delta}^1_1)$.  This contradicts our assumption.
\end{proof}

\begin{lemma}
  $\SSN{I}^0$ is a dense $P$-ideal.
\end{lemma}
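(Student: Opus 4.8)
The plan is to verify the two properties in turn; density will be immediate from the machinery already in place, while the $P$-ideal property is the substantial part.

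For density, suppose some infinite $\SN{X}\subseteq\NN$ had no infinite subset in $\SSN{I}^0$. Fix (in $\ZFC$) an uncountable almost-disjoint family of infinite subsets of $\SN{X}$; by heredity of $\SSN{I}^0$ (Proposition~\ref{ideals}) no member of it belongs to $\SSN{I}^0$, contradicting Proposition~\ref{ccc/fin}. Since $\SSN{I}^0$ is moreover a proper ideal containing the finite sets, this is exactly density.

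For the $P$-ideal property, let $\SN{A}_n\in\SSN{I}^0$; replacing $\SN{A}_n$ by $\bigcup_{k\le n}\SN{A}_k$ we may assume the $\SN{A}_n$ are increasing. Fix strict algebraic lifts $\homo^{(n)}$ of $\vp\rs\SN{A}_n$; by the $e=\id$ normalization each is given by coordinate $*$-homomorphisms $\homo^{(n)}_m:\A{F}_m\to\A{B}_m$, and since two strict algebraic lifts of one map differ only by a norm-null sequence of coordinates, the $\homo^{(n)}$ are \emph{essentially coherent}: $\lim_{m\in\SN{A}_n}\lVert\homo^{(n)}_m-\homo^{(n')}_m\rVert=0$ for $n\le n'$. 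Using coherence I would first adjust the lifts so that $\homo^{(n+1)}_m=\homo^{(n)}_m$ for $m\in\SN{A}_n$ beyond a threshold $b_n$ (this alters $\homo^{(n+1)}$ only by a norm-null sequence of coordinates, hence preserves the lifting property), with $b_0<b_1<\cdots$; then set $\SN{A}\cap[b_n,b_{n+1})=\SN{A}_n\cap[b_n,b_{n+1})$, so that $\SN{A}\supseteq^*\SN{A}_n$ for all $n$, and on $\SN{A}$ let $L_m=\homo^{(n)}_m$ for $m$ in the $n$-th block. This $L$ is a strict $*$-homomorphism $\PA{F}\rs\SN{A}\to\PA{B}\rs\SN{A}$, and the thresholds guarantee that $L$ and $\homo^{(N)}$ agree on all but finitely many coordinates of each $\SN{A}_N$; thus $\quomap\circ L=\vp\rs\SN{A}\circ\quomap$ holds \emph{exactly} on every input supported on some $\SN{A}_N$. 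Using additivity of $L$ and the fact that $e=\id$ makes $\vp$ fix the central projections $\quo{\chi_{\SN{A}_N}}$, the defect $\lVert\quo{L(x)}-\vp\rs\SN{A}(\quo x)\rVert$ at a general $x$ in the unit ball reduces to the corresponding defect at the restriction of $x$ to $\SN{A}\sm\SN{A}_N$.

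The remaining work — and the main obstacle — is to arrange that this residual defect is below a prescribed $\e>0$, refining the block decomposition finely enough relative to the rates of the limits above and exploiting that $\SN{A}\sm\SN{A}_N$ is itself a pseudo-union of members of $\SSN{I}^0$, so that the estimate can be iterated down the staircase; where the direct estimate is too weak, the intended route is to invoke $\MA$ (available since $\bb=\aleph_2$ under $\TA$) to fill the ``gap'' between the exact lifts of the pieces $\vp\rs\SN{A}_n$ and a lift of the pseudo-union, by forcing with a ccc poset of finite conditions. Either way one obtains, for every $\e>0$, a strict algebraic $\e$-lift of $\vp\rs\SN{A}$ (after deleting the finitely many coordinates on which $L$ is not unital) — equivalently, a lift of $\vp\rs\SN{A}$ that is strictly continuous on a dense $G_\delta$, to which Theorem~\ref{definable->lifts} applies with the separable AF algebra $\bigoplus_{m\in\SN{A}}\A{B}_m$ as the distinguished ideal. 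Hence $\SN{A}\in\bigcap_{\e>0}\SSN{I}^\e(\PC{BP})=\SSN{I}^0$ by Proposition~\ref{ideals}, and $\SSN{I}^0$ is a dense $P$-ideal. The reason naive gluing alone does not suffice is that $\vp$ need not be coordinatewise on the tail $\SN{A}\sm\SN{A}_N$, and the subalgebra generated by the truncations $\prod_{m\in\SN{A}\cap\SN{A}_N}\A{F}_m/\bigoplus$ is not norm-dense in $\PA{F}\rs\SN{A}$, so genuine use of coherence together with $\TA+\MA$ is needed at this step.
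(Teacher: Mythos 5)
Your density argument is correct and matches the paper's: an infinite set with no infinite subset in $\SSN{I}^0$ would carry an uncountable a.d.\ family disjoint from $\SSN{I}^0$, contradicting Proposition~\ref{ccc/fin}. The $P$-ideal argument, however, has a genuine gap --- one you partly acknowledge but whose severity the proposal underestimates. Your diagonal gluing produces a strict $*$-homomorphism $L$ on $\PA{F}\rs\SN{A}$ whose induced map agrees with $\vp$ only on the C*-subalgebra of $\quo{\PA{F}\rs\SN{A}}$ generated by the images of the $\PA{F}\rs\SN{A}_N$. That subalgebra is proper: if $\norm{\proj{m}{x}} = 1$ for every $m\in\SN{A}$, then $\quo{x}$ is at distance $1$ from each $\quo{\PA{F}\rs\SN{A}_N}$, hence from the closure of their union. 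Nothing in the construction controls $\quo{L(x)} - \vp(\quo{x})$ for such $x$; the essential-coherence conditions $\lim_{m\in\SN{A}_n}\norm{\homo^{(n)}_m - \homo^{(n')}_m} = 0$ constrain agreement along the pieces $\SN{A}_n$, not $\vp$'s behaviour on elements whose mass spreads down the staircase $\SN{A}\sm\SN{A}_N$. Refining the block thresholds cannot fix this: the residual defect at $x\rs(\SN{A}\sm\SN{A}_N)$ is not a quantity tending to $0$ with $N$ for fixed $x$, it is simply unconstrained. (If the naive gluing did produce an honest lift, essentially the same argument with $\SN{A}_n = [0,n)$ and density of $\SSN{I}^0$ would yield Theorem~\ref{fa->cfh.stronger} outright --- a warning sign that a real additional idea is missing.) The closing appeal to $\MA$ via an unspecified ccc poset of finite conditions has no substance as written and does not correspond to anything in the paper's proof.

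The paper's proof is structurally different and runs through the approximation ideals $\SSN{I}^\e$ rather than $\SSN{I}^0$ directly. Assuming a sequence $\SN{A}_k\in\SSN{I}^\e$ has no pseudo-union in $\SSN{I}^{3\e}$, one forms the auxiliary sets $\SN{B}_f = \bigcup_k\SN{A}_k\cap f(k)$ and runs an $\omega_1$-recursion to produce an uncountable a.d.\ family disjoint from $\SSN{I}^\e$, contradicting Proposition~\ref{ccc/fin}. The recursion step is itself a $\TA$-coloring argument on $(\NN^\NN,<^*)$: assuming every $\SN{B}_g$ lies in $\SSN{I}^\e$, a $<^*$-cofinal $K_1$-homogeneous family yields a \emph{single} sequence $\homo_n$ which, by $K_1$-homogeneity (not by gluing), is a $3\e$-lift on a cofinite union of the $\SN{A}_k$. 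Passing from ``$\e$-pseudo-unions for all $\e$'' to a genuine pseudo-union in $\SSN{I}^0$ then uses one more diagonalization together with $\SSN{I}^0 = \bigcap_{\e > 0}\SSN{I}^\e(\PC{BP})$ from Proposition~\ref{ideals}. The approximation parameter $\e$ is essential: your $L$ is exact on the wrong subalgebra, while the paper's $\homo_n$ is only approximate but is controlled on the whole of $\PA{F}\rs\SN{A}$.
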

\begin{proof}
  That $\SSN{I}^0$ is dense follows easily from Proposition~\ref{ccc/fin}.  To prove it's a $P$-ideal, we will first show that given any infinite sequence $\SN{A}_k$ ($k\in\NN$) of sets in $\SSN{I}^\e$, where $\e > 0$, there is some $\SN{B}\in\SSN{I}^{3\e}$ such that $\SN{A}_k\subseteq^* \SN{B}$ for all $k$.  So fix a sequence $\SN{A}_k$ ($k\in\NN$) of sets in $\SSN{I}^\e$, where $\e > 0$.  We may assume that the $\SN{A}_k$'s are pairwise disjoint.  Assume for sake of contradiction that there is no $\SN{B}\in\SSN{I}^{3\e}$ which almost-includes every $\SN{A}_k$, and for each $f : \NN\to\NN$ let
  \[
    \SN{B}_f = \bigcup\set{\SN{A}_k\cap f(k)}{k\in\NN}
  \]
  Then for every $f\in\NN^\NN$ and $k\in\NN$, $\SN{B}_f\cap \SN{A}_k$ is finite, and if $f <^* g$ then $\SN{B}_f\subseteq^* \SN{B}_g$.  We will prove that for every $f\in\NN^\NN$ there is some $g\in\NN^\NN$ such that $f <^* g$ and $\SN{B}_g\sm \SN{B}_f\not\in\SSN{I}^\e$.  By a simple recursion we may then construct a $<^*$-increasing sequence $f_\ord\in\NN^\NN$, for $\ord < \omega_1$, with $\SN{B}_{f_{\ord+1}}\sm \SN{B}_{f_\ord} \not\in\SSN{I}^\e$ for each $\ord$.  Thus the sets $\SN{B}_{f_{\ord+1}}\sm \SN{B}_{f_\ord}$ form an uncountable almost-disjoint family which is disjoint from $\SSN{I}^\e$, contradicting Proposition~\ref{ccc/fin}.

  For simplicity we will assume that $f(k) = 0$ for all $k\in\NN$, and show that for some $g\in\NN^\NN$, $\SN{B}_g\not\in\SSN{I}^\e$.  For sake of contradiction, suppose that this is not so.  Define a coloring $[\NN^\NN]^2 = K_0\cup K_1$ by
  \[
    \{g,h\}\in K_0 \iff \exists n\in \SN{B}_g\cap \SN{B}_h \quad \norm{\homo_n^{\SN{B}_g} - \homo_n^{\SN{B}_h}} > 2\e
  \]
  where for each $\SN{B}\in\SSN{I}^\e$, we have fixed a sequence of $*$-homomorphisms $\homo_n^{\SN{B}} : \A{F}_n\to\A{B}_n$ (with $\homo_n^{\SN{B}} = 0$ when $n\not\in\SN{B}$)
  according to the definition of $\SSN{I}^\e$.  It follows that $K_0$ is open when $\NN^\NN$ is given the topology obtained by identifying $g$ with $(g,\homo^{\SN{B}_g})$, a member of the Polish space
  \[
    \NN^\NN\times \prod_n \Hom(\A{F}_n, \A{B}_n)
  \]
  \begin{claim}
    There is no uncountable, $K_0$-homogeneous subset of $\NN^\NN$. 
  \end{claim}
  \begin{proof}
    Suppose $H$ is such and $|H| = \aleph_1$.  Since $\bb > \aleph_1$, there is some $\bar{h}\in\NN^\NN$ such that for every $h\in H$, $h <^* \bar{h}$.  By refining $H$ to an uncountable subset $\bar{H}$, we may assume that for some $\bar{k}\in\NN$ and some sequence of $*$-homomorphisms $\zeta_k : \A{F}_k\to \A{B}_k$ ($k < \bar{k}$), we have for all $h\in\bar{H}$ that
    \begin{enumerate}
      \item  for all $k\ge \bar{k}$, $h(k) < \bar{h}(k)$,
      \item\label{close.tail}  for all $k\ge \bar{k}$, $\norm{\homo_k^{\SN{B}_h} - \homo_k^{\SN{B}_{\bar{h}}}} \le \e$.
      \item\label{close.head}  for all $k < \bar{k}$, $\norm{\homo_k^{\SN{B}_h} - \zeta_k} \le \e$.
    \end{enumerate}
    Now, clearly, $\bar{H}$ is $K_1$-homogeneous, and this is a contradiction.
  \end{proof}
  By $\TA$, $\NN^\NN$ must be $\sigma$-$K_1$-homogeneous.  Since $\NN^\NN$ is countably directed under $<^*$, there must be some $K_1$-homogeneous set $H$ which is $<^*$-cofinal in $\NN^\NN$.  It follows that for some $\bar{k}\in\NN$, $H$ is $<^{\bar{k}}$-cofinal in $\NN^\NN$, and hence
  \[
    \SN{C} = \bigcup_{h\in H} \SN{B}_h \supseteq \bigcup_{k=\bar{k}}^\infty \SN{A}_k
  \]
  For each $n\in \SN{C}$, choose a $*$-homomorphism $\homo_n : \A{F}_n\to \A{B}_n$ from the set $\set{\homo_n^{\SN{B}_h}}{h\in H\land n\in \SN{B}_h}$.  By the $K_1$-homogeneity of $H$, then, for any $h\in H$ we have
  \[
    \forall n\in \SN{B}_h\quad \norm{\homo_n - \homo_n^{\SN{B}_h}} \le 2\e
  \]
  \begin{claim}
    There is some $\ell$ such that the sequence $\homo_n$ ($n\in \SN{C}$) forms a $3\e$-lift of $\vp$ on $\SN{A}_k$ for all $k\ge \ell$.
  \end{claim}
  \begin{proof}
    Suppose not; then there are infinitely many $k\ge\bar{k}$ such that for some $x_k\in\PA{F}_1\rs\SN{A}_k$,
    \[
      \limsup_{n\in \SN{A}_k} \norm{\homo_n(\proj{n}{x_k}) - \proj{n}{L(x_k)}} > 3\e
    \]
    For simplicity we assume that this occurs for all $k\ge \bar{k}$.  Define $x\in\PA{F}\rs\SN{A}$ by $x\rs\SN{A}_k = x_k$ for each $k\ge\bar{k}$ and $x\rs\SN{A}_k = 0$ for $k < \bar{k}$.  Then $\quo{L(x)\rs \SN{A}_k} = \quo{x_k}$ for each $k\ge\bar{k}$.  Hence for each $k\ge\bar{k}$, we may choose some $n_k\in \SN{A}_k$ large enough that
    \[
      \norm{\homo_{n_k}(\proj{n_k}{x}) - \proj{n_k}{L(x)}} > 3\e
    \]
    Define $h\in\NN^\NN$ by $h(k) = n_k + 1$ and let $y = x\rs \SN{B}_h$.  Then $\quo{L(y)} = \quo{L(x)\rs \SN{B}_h}$, and so for any $k\ge\bar{k}$ large enough,
    \[
      \norm{\homo_{n_k}(\proj{n_k}{y}) - \proj{n_k}{L(y)}} > 3\e
    \]
    But $\SN{B}_h\in \SSN{I}^\e$, and $\norm{\homo_n - \homo_n^{\SN{B}_h}} \le 2\e$ for all $n\in \SN{B}_h$.  This is a contradiction.
  \end{proof}
  \begin{claim}
    The sequence $\homo_n$ ($n\in \SN{C}$) forms a $3\e$-lift of $\vp$ on $\bigcup\set{\SN{A}_k}{k\ge\ell}$.
  \end{claim}
  \begin{proof}
    This follows from the fact that the ideal generated by $\set{\SN{B}_f}{f\in\NN^\NN}$ and $\set{\SN{A}_k}{k\ge\ell}$ is dense in $\pow(\bigcup\set{\SN{A}_k}{k\ge\ell})$.
  \end{proof}

  Since $\SN{A}_0,\ldots,\SN{A}_{\ell-1}\in\SSN{I}^\e$, and $\bigcup\set{\SN{A}_k}{k\ge\ell}\in\SSN{I}^{3\e}$, it follows that their union is in $\SSN{I}^{3\e}$.  This clearly contradicts our assumption on the sequence $\SN{A}_k$.  Now assume we've been given a sequence $\SN{A}_k$ ($k\in\NN$) from $\SSN{I}^0$.  Then for each $\ell\in\NN$ we may choose some $\SN{B}_\ell\in\SSN{I}^{1/(\ell+1)}$ such that $\SN{A}_k\subseteq^* \SN{B}_\ell$ for all $k\in\NN$.  Then we may find $\SN{C}$ such that $\SN{A}_k \subseteq^* \SN{C} \subseteq^* \SN{B}_\ell$ for all $k,\ell\in\NN$.  It follows that $\SN{C}\in\SSN{I}^0$, hence $\SSN{I}^0$ is a $P$-ideal.
\end{proof}

Finally, we are ready to prove Theorem~\ref{fa->cfh.stronger}.
\begin{proof}
  For each $\SN{A}\in\SSN{I}^0$, fix a sequence $\homo_n^{\SN{A}} : \A{F}_n\to\A{B}_n$ of $*$-homomorphisms, witnessing that $\SN{A}\in\SSN{I}^0$.  For each $\e > 0$ define a coloring $[\SSN{I}^0]^2 = K_0^\e\cup K_1^\e$ by
  \[
    \{\SN{A},\SN{B}\}\in K_0^\e\iff \exists n\in\SN{A}\cap\SN{B}\quad \norm{\homo_n^{\SN{A}} - \homo_n^{\SN{B}}} > \e
  \]
  Then $K_0^\e$ is open when $\SN{A}\in\SSN{I}^0$ is identified with $(\SN{A},\homo^{\SN{A}})$.
  \begin{claim}
    There is no uncountable, $K_0^\e$-homogeneous subset of $\SSN{I}^0$, for any $\e > 0$.
  \end{claim}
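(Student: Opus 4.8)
The plan is to argue by contradiction: suppose $H\subseteq\SSN I^0$ is uncountable and $K_0^\e$-homogeneous, and (after thinning) that $|H|=\aleph_1$. The starting point is a ``germ'' observation. If $\SN A,\SN B\in\SSN I^0$, then the fixed sequences $\homo_n^{\SN A}$ and $\homo_n^{\SN B}$ assemble into strict algebraic lifts of $\vp\rs\SN A$ and $\vp\rs\SN B$; restricting both to $\PA F\rs(\SN A\cap\SN B)$ gives two lifts of the single $*$-homomorphism $\vp\rs(\SN A\cap\SN B)$, so their difference takes values in $\bigoplus_{n\in\SN A\cap\SN B}\A B_n$. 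Taking suprema over the unit balls of the $\A F_n$ this means $\norm{\homo_n^{\SN A}-\homo_n^{\SN B}}\to 0$ as $n\to\infty$ through $\SN A\cap\SN B$; hence $D_{\SN A,\SN B}=\set{n\in\SN A\cap\SN B}{\norm{\homo_n^{\SN A}-\homo_n^{\SN B}}>\e}$ is finite, and $K_0^\e$-homogeneity says it is nonempty for every distinct pair $\SN A,\SN B\in H$.

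The key reduction I would aim for is to make these finite ``bad sets'' uniformly bounded on an uncountable subfamily: find an uncountable $H'\subseteq H$ and an $m\in\NN$ with $D_{\SN A,\SN B}\subseteq m$ for all $\SN A,\SN B\in H'$. The mechanism is that $\SSN I^0$ is a $P$-ideal by the preceding Lemma, so any countable $H_0\subseteq H$ has a pseudo-union $\SN C\in\SSN I^0$; applying the germ observation to each pair $(\SN A,\SN C)$ with $\SN A\in H_0$, the lift $\homo^{\SN C}$ serves as a common reference and a single assignment of cutoffs $\SN A\mapsto m_{\SN A}$ on $H_0$ forces $D_{\SN A,\SN B}\subseteq\max(m_{\SN A},m_{\SN B})$ for all $\SN A,\SN B\in H_0$. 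I would then glue these local assignments over all of $H$ using $\MA$: the forcing whose conditions are finite subsets $F\subseteq H$ equipped with cutoffs witnessing that every pair from $F$ has bad set below the corresponding maximum, two conditions being amalgamated by freely enlarging the cutoffs on the new cross-pairs, is ccc --- this is where finiteness of the bad sets lets a $\Delta$-system and pigeonhole argument go through. A generic filter yields a function $\SN A\mapsto m_{\SN A}$ on an uncountable subset of $H$, and a last pigeonhole on its (countably many) values produces the uncountable $H'$ on which $m_{\SN A}$ is constantly some $m$.

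With $H'$ and $m$ in hand, separability closes the argument. The map $\SN A\mapsto\bigl(\SN A\cap m,(\homo_n^{\SN A})_{n<m}\bigr)$ sends $H'$ into the separable metric space $\SSN P(m)\times\prod_{n<m}\Hom(\A F_n,\A B_n)$, so one finds distinct $\SN A,\SN B\in H'$ with $\SN A\cap m=\SN B\cap m$ and $\norm{\homo_n^{\SN A}-\homo_n^{\SN B}}<\e$ for all $n<m$. For such a pair, any $n\in\SN A\cap\SN B$ with $n<m$ lies in $\SN A\cap m=\SN B\cap m$ and so is not in $D_{\SN A,\SN B}$, while any $n\in\SN A\cap\SN B$ with $n\ge m$ is not in $D_{\SN A,\SN B}\subseteq m$ either; thus $D_{\SN A,\SN B}=\emptyset$, i.e.\ $\{\SN A,\SN B\}\in K_1^\e$, contradicting that $H'\subseteq H$ is $K_0^\e$-homogeneous.

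I expect the middle paragraph to be the crux. The $P$-ideal property of $\SSN I^0$ only delivers coherence of the chosen lifts along countable subfamilies, and it is $\MA$ (together with $\bb>\aleph_1$ to tame the cutoff functions that arise) that must promote this to an uncountable subfamily carrying a single bound; getting the right poset --- one that is genuinely ccc, rather than the naive poset of finite $K_1^\e$-homogeneous subsets, which is not --- is the delicate point.
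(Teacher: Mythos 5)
The first and last paragraphs of your proposal are sound: the ``germ'' observation that $D_{\SN A,\SN B}$ is finite is correct (two strict algebraic lifts of $\vp\rs(\SN A\cap\SN B)$ differ, coordinatewise, by something tending to $0$ in operator norm, using finite-dimensionality of $\A F_n$), and the closing separability argument would indeed finish the proof if you had the uniform bound $m$ on an uncountable $H'$.

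The gap is in the middle paragraph, and I think it is fatal as written. Your proposed poset --- finite $F\subseteq H$ with cutoffs $m : F\to\NN$ such that $D_{\SN A,\SN B}\subseteq\max(m(\SN A),m(\SN B))$ for all pairs in $F$ --- is either not ccc or not useful, depending on which extension relation you adopt. If extension requires $m'\rs F = m$, it has an uncountable antichain: the conditions $(\{\SN A_\alpha\},\{(\SN A_\alpha,0)\})$ are pairwise incompatible because amalgamation would force $D_{\SN A_\alpha,\SN A_\beta}\subseteq 0$, which fails as every bad set is nonempty. If extension allows the cutoffs to grow (``freely enlarging the cutoffs on the new cross-pairs''), the poset becomes trivially directed, and then the generic filter imposes no constraint: there is nothing forcing the supremum of the cutoffs assigned to a single $\SN A$ over the $\aleph_1$-many conditions in the filter to be finite, so you cannot perform the pigeonhole on the values $m_\SN A$. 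More fundamentally, the uniform-bound statement you are after is a ZFC-unprovable partition-type claim in the abstract ($\omega_1\not\to(\omega_1)^2_\omega$), and the concrete structure you invoke is only a \emph{local} common reference (the pseudo-union $\SN C$ of a countable $H_0$), which is exactly what evaporates when you try to go uncountable.

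The paper sidesteps this by changing what the forcing produces. It first uses the $P$-ideal structure to replace $H$ by a $\subseteq^*$-increasing $\omega_1$-chain $\bar H$ (another $\TA$ application is used to check $\bar H$ is still $K_0^{\e/2}$-homogeneous), and then forces with conditions $(\ell_p, x_p, H_p)$ in which $x_p\in\PA F_1\rs\ell_p$ is a \emph{witness}: the requirement is the pointwise estimate $\norm{\homo_n^{\SN A}(\proj{n}{x_p}) - \homo_n^{\SN B}(\proj{n}{x_p})}>\e/2$ for some $n\in\SN A\cap\SN B\cap\ell_p$, not an operator-norm estimate. The chain structure (giving a minimal $\SN A_p\in H_p$ and common stabilization from $m_p$ on) is precisely what makes the $\Delta$-system amalgamation work, so this poset genuinely is ccc. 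The point $x\in\PA F_1$ produced by $\MA$ then supplies a \emph{global} reference: $L(x)$ is a single representative of $\vp(\pi(x))$, and $\limsup_{n\in\SN A}\norm{\homo_n^{\SN A}(\proj{n}{x}) - \proj{n}{L(x)}}=0$ for every $\SN A$ in the uncountable generic set. The pigeonhole that you want --- bounding the ``bad'' indices uniformly and then using separability below the bound --- now goes through, because the cutoff $m_\SN A$ can be defined by comparison to $L(x)$ rather than by comparing pairs of lifts to each other. That common reference is the missing ingredient in your argument, and it is exactly what the witness $x_p$ in the forcing conditions is engineered to deliver.
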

  \begin{proof}
    Suppose $H$ is $K_0^\e$-homogeneous, and has size $\aleph_1$.  Since $\SSN{I}^0$ is a P-ideal, we may form a subset $\bar{H}$ of $\SSN{I}^0$ which, under the $\subseteq^*$ ordering, is an $\omega_1$-chain dominating $H$.  By (a weakening of) $\TA$, we may assume (by going to a cofinal subset of $\bar{H}$) that $\bar{H}$ is either $K_0^{\e/2}$- or $K_1^{\e/2}$-homogeneous.  Assume the latter holds; by refining $H$ to an uncountable subset, we may assume there is some $\bar{n}$ such that for all $\SN{A}\in H$, there is some $\SN{\bar{A}}\in\bar{H}$, for which
    \[
      \SN{A}\sm\SN{\bar{A}}\subseteq \bar{n}\quad\mbox{and}\quad\forall n\in\SN{A}\sm\bar{n}\quad \norm{\homo_n^{\SN{A}} - \homo_n^{\SN{\bar{A}}}} \le \e/4
    \]
    But then any pair $\{\SN{A},\SN{B}\}\in [H]^2$ with
    \[
      \forall n < \bar{n}\quad \norm{\homo_n^{\SN{A}} - \homo_n^{\SN{B}}} \le \e
    \]
    is in $K_1^\e$, a contradiction; so $\bar{H}$ is $K_0^{\e/2}$-homogeneous.  Replacing $H$ with $\bar{H}$ and $\e$ with $\e / 2$, we may assume without loss of generality that $H$ is an increasing $\omega_1$-chain with respect to $\subseteq^*$.  Define a forcing notion $\PP$ as follows.  The conditions of $\PP$ are taken to be triples $p = (\ell_p, x_p, H_p)$, where
    \begin{enumerate}
      \item  $\ell_p\in\NN$, $x_p\in\PA{F}_1\rs \ell_p$, and $H_p\in [H]^{<\omega}$,
      \item\label{condition.spread}
      for all distinct $\SN{A},\SN{B}\in H_p$, there is some $n\in\SN{A}\cap\SN{B}\cap \ell_p$ with
      \[
        \norm{\homo_n^\SN{A}(\proj{n}{x_p}) - \homo_n^\SN{B}(\proj{n}{x_p})} > \e/2
      \]
    \end{enumerate}
    Put $p\le q$ if and only if $\ell_p\ge\ell_q$, $H_p\supseteq H_q$, and $\norm{x_p\rs \ell_q - x_q} < \e/4$.  We will argue that $\PP$ is ccc.  Suppose $\T{A}\subseteq\PP$ is uncountable.  For each $p\in \T{A}$, let $\SN{A}_p$ be the minimal member of $H_p$ with respect to $\subseteq^*$, and choose $m_p\ge\ell_p$ large enough and $\delta_p > 0$ small enough that
    \begin{itemize}
      \item  for all $\SN{A}\in H_p$, $\SN{A}_p\sm \SN{A} \subseteq m_p$ and
      \[
        \forall n\ge m_p\quad \norm{\homo_n^\SN{A} - \homo_n^{\SN{A}_p}} \le \e / 4
      \]
      \item  for all distinct $\SN{A},\SN{B}\in H_p$, there is some $n\in\SN{A}\cap\SN{B}\cap \ell_p$ with
      \[
        \norm{\homo_n^\SN{A}(\proj{n}{x_p}) - \homo_n^\SN{B}(\proj{n}{x_p})} > \e/2 + \delta_p
      \]
    \end{itemize}
    By thinning out $\T{A}$ we may assume that there are $k,\ell,m\in\NN$ and $\delta > 0$ such that for all $p\in \T{A}$, we have $|H_p| = k$, $\ell_p = \ell$, $m_p = m$ and $\delta_p \ge \delta$.  Finally, by further thinning $\T{A}$ we may assume that for all distinct $p,q\in \T{A}$,
    \begin{itemize}
      \item  $\norm{x_p - x_q} < \delta / 2$,
      \item  for all $n < m$, $\norm{\homo_n^{\SN{A}_p} - \homo_n^{\SN{A}_q}} < \e/2$, and
      \item  $H_p\cap H_q = \emptyset$.
    \end{itemize}
    Now let $p,q\in \T{A}$ be given.  Since $\{\SN{A}_p,\SN{A}_q\}\in K_0^\e$, there is some $n\in\SN{A}_p\cap\SN{A}_q$ such that $\norm{\homo_n^{\SN{A}_p} - \homo_n^{\SN{A}_q}} > \e$.  By the above it must be that $n\ge m$.  Choose $x\in\PA{F}_1\rs (n + 1)$ with $x\rs \ell = x_p$ and
    \[
      \norm{\homo_n^{\SN{A}_p}(\proj{n}{x}) - \homo_n^{\SN{A}_q}(\proj{n}{x})} > \e/2
    \]
    and put $r = (n + 1, x, H_p\cup H_q)$.  We claim that $r\in\PP$ and $r$ extends both $p$ and $q$.  The only thing to check is that $r$ satisfies~\eqref{condition.spread}; the rest is clear.  Let $\SN{A},\SN{B}\in H_p\cup H_q$ be given.  In the case where both $\SN{A}$ and $\SN{B}$ are in $H_p$, \eqref{condition.spread} holds simply because $x\rs \ell = x_p$; in the case of $\SN{A},\SN{B}\in H_q$,~\eqref{condition.spread} holds since $\norm{x_p - x_q} < \delta / 2$.  Finally, if $\SN{A}\in H_p$ and $\SN{B}\in H_q$, then since $n\ge m$,
    \[
      \norm{\homo_n^\SN{A}(\proj{n}{x}) - \homo_n^\SN{B}(\proj{n}{x})} > \norm{\homo_n^{\SN{A}_p}(\proj{n}{x}) - \homo_n^{\SN{A}_q}(\proj{n}{x})} - (\e/4 + \e/4) > \e/2
    \]
    and so~\eqref{condition.spread} is satisfied.
    
    By $\MA$, we may find an $x\in\PA{F}_1$ and an uncountable $\hat{H}\subseteq H$ such that for all distinct $\SN{A},\SN{B}\in\hat{H}$,
    \[
      \exists n\in\SN{A}\cap\SN{B} \quad \norm{\homo_n^\SN{A}(\proj{n}{x}) - \homo_n^\SN{B}(\proj{n}{x})} > \e / 2
    \]
    By our choice of $*$-homomorphisms $\homo_n^\SN{A}$, we have for all $\SN{A}\in\hat{H}$
    \[
      \limsup_{n\in\SN{A}} \norm{\homo_n^\SN{A}(\proj{n}{x}) - \proj{n}{L(x)}} = 0
    \]
    The usual pigeonhole argument shows that this is a contradiction.
  \end{proof}
  We have shown that the first alternative of $\TA$ fails for each of the partitions $[\SSN{I}^0] = K_0^\e\cup K_1^\e$; hence $\SSN{I}^0$ is $\sigma$-$K_1^\e$-homogeneous for every $\e > 0$.  Let $\e_k = 2^{-k}$ for each $k\in\NN$; then, since $\SSN{I}^0$ is a P-ideal, we may find a decreasing sequence of sets
  \[
    \SSN{I}^0 \supseteq X_0 \supseteq X_1 \supseteq \cdots
  \]
  such that each $X_k$ is $K_1^{\e_k}$-homogeneous and cofinal in $\SSN{I}^0$ in the ordering $\subseteq^*$.  By density of $\SSN{I}^0$, for each $k$ the set $\bigcup X_k$ must be cofinite; say $[m_k,\infty) \subseteq \bigcup X_k$, and $m_k < m_{k+1}$ for each $k$.  Choose any sequence of $*$-homomorphisms $\homo_n$, $n\ge m_0$, satisfying
  \[
    \homo_n \in \set{\homo_n^\SN{A}}{m_k \le n < m_{k+1}\land n\in \SN{A}\in X_k}
  \]
  It follows that, for any $\SN{A}\in\SSN{I}^0$,
  \[
    \limsup_{n\in\SN{A}} \norm{\homo_n - \homo_n^\SN{A}} = 0
  \]
  Moreover, by density of $\SSN{I}^0$, this proves that the sequence $\homo_n$ ($n\ge m_0$) makes up a lift of $\vp$ on $[m_0,\infty)$.
\end{proof}

\section{Coherent families of $*$-homomorphisms}
\label{sec:coherent-families}

\begin{thm}
  \label{cfh->borel}
  Assume $\TA$ and let $\A{A}_n$ ($n\in\NN$) be a sequence of UHF algebras.  Let $\A{B}$ be a separable C*-algebra, and suppose
  \[
    \vp : \prod\A{A}_n / \bigoplus \A{A}_n \to \A{M}(\A{B}) / \A{B}
  \]
  is determined by a coherent family of $*$-homomorphisms.  Then $\Gr{\vp}$ is Borel.
\end{thm}
\begin{proof}
  Let $\A{A}_{n,k}$ ($k\in\NN$) be a suitable sequence of subalgebras of $\A{A}_n$, and suppose
  \[
    \homo^\xi_n : \A{A}_{n,\xi(n)}\to \A{B}\qquad (\xi\in\NN^\NN, n\in\NN)
  \]
  is a coherent family of $*$-homomorphisms which determines $\vp$.  Define colorings $[\NN^\NN]^2 = K_0^\e \cup K_1^\e$, for each $\e > 0$, by placing $\{\xi,\eta\}\in K_0^\e$ if and only if
  \[
    \exists n\in\NN\;\;\exists x\in\A{A}_{n,\xi(n)}\cap\A{A}_{n,\eta(n)}\;\;\norm{\homo_n^\xi(x) - \homo_n^\eta(x)} > \e\norm{x}
  \]
  Note that $\A{A}_{n,\xi(n)}\cap \A{A}_{n,\eta(n)} = \A{A}_{n,\min(\xi(n),\eta(n))}$.  Given $\xi\in\NN^\NN$, let $\gomo^\xi$ denote the sequence
  \[
    \gomo^\xi_{n,m} = \left\{\begin{array}{ll}
      \homo_n^\xi\rs\A{A}_{n,m} & m\le\xi(n) \\
      0 & m > \xi(n)
    \end{array}\right.
  \]
  Each $K_0^\e$ is then open in the topology on $\NN^\NN$ obtained by identifying $\xi$ with
  \[
    (\xi,\gomo^\xi) \in \NN^\NN\times \prod_{n,m} \Hom(\A{A}_{n,m},\A{B})
  \]
  where we use the point-norm topology on $\Hom(\A{F}_{n,m},\A{B})$.  We claim, as usual, that there are no uncountable $K_0^\e$-homogeneous subsets of $\NN^\NN$, for any $\e > 0$.  To see this, fix some $H\subseteq\NN^\NN$ of size $\aleph_1$.  Since $\bb > \aleph_1$ we may find some $\bar{\xi}\in\NN^\NN$ such that $\xi <^* \bar{\xi}$ for all $\xi\in H$, and by refining $H$ to an uncountable subset we may assume that for some $\bar{n}$, we have for all $\xi\in H$ that
  \begin{itemize}
    \item  for all $n\ge\bar{n}$, $\xi(n) < \bar{\xi}(n)$ and
    \item  for all $n\ge\bar{n}$, $\norm{\homo_n^{\bar{\xi}}\rs\A{A}_{n,\xi(n)} - \homo_n^\xi} \le \e/2$.
  \end{itemize}
  By the uncountability of $H$ we may find distinct $\xi,\eta\in H$ with $\xi\rs\bar{n} = \eta\rs\bar{n}$ and for all $n < \bar{n}$,
  \[
    \norm{\homo_n^\xi - \homo_n^\eta} \le \e
  \]
  hence $\{\xi,\eta\}\in K_1^\e$ and so $H$ cannot be $K_0^\e$-homogeneous.

  Let $\e_k$ ($k\in\NN$) be some sequence in $\RR^+$ converging to zero.  By $\TA$ and the $\sigma$-directedness of $\NN^\NN$ under $<^*$, we may find sets
  \[
    \NN^\NN \supseteq X_0\supseteq X_1 \supseteq \cdots
  \]
  where each $X_k$ is $K_1^{\e_k}$-homogeneous, and cofinal in $\NN^\NN$ with respect to $<^*$ (cf. the argument near the end of Theorem~\ref{fa->cfh.stronger}).  Then we may construct an increasing sequence $n_k\in\NN$ ($k\in\NN$) such that each $X_k$ is $<^{n_k}$-cofinal in $\NN^\NN$.  For each $n\in [n_k,n_{k+1})$ and $i\in\NN$, choose a function $\xi_{n,i}\in X_k$ such that $\xi_{n,i}(n) \ge i$.
  
  For each $n\in\NN$ let $\theta_{n,i} : \A{A}_n\to\A{A}_{n,i}$ ($i\in\NN$) be a commuting system of conditional expectation maps, and define $\theta^\xi : \prod \A{A}_n \to \prod \A{A}_{n,\xi(n)}$ by
  \[
    \proj{n}{\theta^\xi(a)} = \theta_{n,\xi(n)}(\proj{n}{a})
  \]
  We also let $p_n = \homo^{\bar{0}}_n(1_{\A{A}_n})$ ($n\in\NN$) where $\bar{0}$ is the function with constant value $0$.  Note that, by coherence, if $\eta\in\NN^\NN$ then
  \[
    p_n - \homo^\eta_n(1_{\A{A}_n}) \to 0
  \]
  Moreover, if $p = \sum p_n$ then $\pi(p) = \pi(\homo^\eta(1))$ for all $\eta$.
  \begin{claim}
    Let $(a,b)\in\prod(\A{A}_n)_1\times \A{M}(\A{B})_1$ be given.  Then $(a,b)\in\Gr{\vp}$ if and only if $\quo{b} = \quo{\sum p_n b p_n}$ and
    \[
      \lim_n \limsup_{i\to\infty} \norm{p_n b p_n - \homo_n^{\xi_{n,i}}(\theta_{n,i}(\proj{n}{a}))} = 0
    \]
  \end{claim}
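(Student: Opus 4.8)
\emph{Proof proposal.} The plan is to read off $\Gr{\vp}$ from the two displayed conditions by way of the stratification $\prod\A{A}_n/\bigoplus\A{A}_n = \bigcup_\xi\Q_\xi$. Write $c_n^\xi := \homo_n^\xi(\theta_{n,\xi(n)}(\proj{n}{a}))$. The basic mechanism is that whenever $\xi$ \emph{diagonalizes} $a$ --- meaning $\norm{\proj{n}{a}-\theta_{n,\xi(n)}(\proj{n}{a})}\to 0$, so that $\quo{a}=\quo{\theta^\xi(a)}\in\Q_\xi$ --- one has $\vp(\quo{a})=\vp^\xi(\quo{\theta^\xi(a)})=\quo{\homo^\xi(\theta^\xi(a))}=\quo{\sum_n c_n^\xi}$, the last sum being strict. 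The diagonalizers of $a$ form a $<^*$-cofinal, $\le^*$-upward closed family, so we may choose them inside prescribed cofinal sets; for each $k$ we fix a diagonalizer $\xi^{(k)}\in X_k$ (possible since $X_k$ is $<^{n_k}$-cofinal), chosen moreover to diagonalize $a$ already from $n_k$ onwards, and we put $c_n:=c_n^{\xi^{(k)}}$ for $n\in[n_k,n_{k+1})$.

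The first ingredient is a ``block--diagonal'' reduction. Since $\vp(1)=\quo{\homo^\eta(1)}=\quo{p}$, the range of $\vp$ lies in $\quo{p}\,\A{Q}(\A{B})\,\quo{p}$; and since each $\homo^\xi$ has range in the strict closure of $\bigoplus_n p_n^\xi\A{B}p_n^\xi$ with $\norm{p_n^\xi-p_n}\to 0$, a computation with the strict topology (using $\quo{\homo^\xi(1)}=\quo{p}$ and $\norm{p_n g}\to 0$ for $g\in\A{B}$) shows that the range of $\vp$ is in fact contained in the $\quomap$-image of the strict closure of $\bigoplus_n p_n\A{B}p_n$, and that $\norm{p_n b p_n - c_n^\xi}\to 0$ whenever $(a,b)\in\Gr{\vp}$ and $\xi$ diagonalizes $a$. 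The first of these already yields the forward direction of the block--diagonal clause; together they show that, for any diagonalizer $\xi$, $(a,b)\in\Gr{\vp}$ iff $\quo{b}=\quo{\sum_n c_n^\xi}$, and that this is equivalent to the conjunction of $\quo{b}=\quo{\sum_n p_n b p_n}$ and $\lim_n\norm{p_n b p_n - c_n^\xi}=0$. Thus the claim is reduced to the assertion
\[
  \lim_n\limsup_{i\to\infty}\norm{p_n b p_n - \homo_n^{\xi_{n,i}}\bigl(\theta_{n,i}(\proj{n}{a})\bigr)}=0\quad\Longleftrightarrow\quad\lim_n\norm{p_n b p_n - c_n}=0 .
\]

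The heart of the matter is the estimate: for $n\in[n_k,n_{k+1})$ and all sufficiently large $i$,
\[
  \norm{\homo_n^{\xi_{n,i}}\bigl(\theta_{n,i}(\proj{n}{a})\bigr) - c_n}\ \le\ \e_k + \norm{\proj{n}{a}-\theta_{n,\xi^{(k)}(n)}(\proj{n}{a})} + o_i(1).
\]
This is proved purely from the $K_1^{\e_k}$-homogeneity of $X_k$, applied to the pair $\{\xi_{n,i},\xi^{(k)}\}$ --- both of which lie in $X_k$ --- evaluated on the common matrix subalgebra $\A{A}_{n,\min(i,\xi^{(k)}(n))}$, together with the identities $\theta_{n,i}\circ\theta_{n,j}=\theta_{n,\min(i,j)}$ and the convergence $\theta_{n,i}(\proj{n}{a})\to\proj{n}{a}$ (as $i\to\infty$, for fixed $n$); no appeal to coherence is needed here, which is precisely what lets the argument survive the dependence of $k$ on $n$. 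Since $\e_k\to 0$ and each $\xi^{(k)}$ diagonalizes $a$ from $n_k$ on, taking $\limsup_i$ and then $\lim_n$ gives $\lim_n\limsup_i\norm{\homo_n^{\xi_{n,i}}(\theta_{n,i}(\proj{n}{a})) - c_n}=0$, and the displayed equivalence follows by the triangle inequality. The forward implication of the claim then follows from $\norm{p_n b p_n - c_n^\xi}\to 0$; for the converse, the two hypotheses, together with the estimate above and the block--diagonal reduction, give $\quo{b}=\quo{\sum_n c_n^\xi}=\vp(\quo{a})$ for a diagonalizer $\xi$.

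I expect the delicate point to be the control of $\norm{p_n b p_n - c_n}$ along $n\to\infty$, where the diagonalizer $\xi^{(k)}$ behind $c_n$ varies with $k=k(n)$: the $\xi_{n,i}$ were fixed before $a$ and need not be $<^*$-comparable to any single diagonalizer, so coherence cannot be invoked directly. As in the closing argument of Theorem~\ref{fa->cfh.stronger}, the remedy is to choose each $\xi^{(k)}$ so that it already diagonalizes $a$ from $n_k$ onwards --- possible since $X_k$ is $<^{n_k}$-cofinal and a single function dominates the threshold $n\mapsto\min\{m:\norm{\proj{n}{a}-\theta_{n,m}(\proj{n}{a})}<1/n\}$ --- and then to exploit that any two strict lifts of $\vp$ restricted to a single $\Q_\xi$ agree modulo $\A{B}$, which is exactly the coherence relation; this makes $\norm{p_n b p_n - c_n}$ depend on $k$ only through the diagonalization error, which tends to $0$. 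A secondary, purely C*-algebraic nuisance is the block--diagonal bookkeeping of the second paragraph: the strict closures of $\bigoplus_n p_n^\xi\A{B}p_n^\xi$ and $\bigoplus_n p_n\A{B}p_n$ agree modulo $\A{B}$ only after discarding finitely many coordinates, and one must check that the termwise errors produced are summable.
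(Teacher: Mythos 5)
Your proposal takes essentially the same route as the paper's proof. Both arguments rest on the same three mechanisms: the $K_1^{\e_k}$-homogeneity of $X_k$ to compare $\homo_n^{\xi_{n,i}}$ against a diagonalizer living in the same $X_k$; the commuting system of conditional expectations, via $\theta_{n,m}\circ\theta_{n,i}=\theta_{n,m}$ for $m\le i$, to change the argument from $\theta_{n,i}(\proj{n}{a})$ to $\theta_{n,\eta(n)}(\proj{n}{a})$; and the fact that any diagonalizer $\eta$ produces a representative $\homo^\eta(\theta^\eta(a))=\sum c_n^\eta$ of $\vp(\quo{a})$ that is $p_n$-block-diagonal up to terms vanishing in norm. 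The running sequence $c_n=c_n^{\xi^{(k(n))}}$ is an organizational device (the paper instead runs the $3\e_k$-estimate separately for each fixed $k$ and a single $\eta\in X_k$), but the inequalities you establish --- the middle term controlled by $\e_k$ via homogeneity, the outer terms by the diagonalization error and $\theta_{n,i}(\proj{n}{a})\to\proj{n}{a}$ --- are exactly those in the paper's forward and backward passages.

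One small calibration on your closing remark about the block-diagonal bookkeeping: the termwise errors $\norm{p_n^\xi - p_n}$ need not be summable, and coherence gives only that they tend to zero; so ``check summability'' is not the right criterion, nor is it what is needed. What actually carries the argument is that $pbp-\sum c_n^\eta\in\A{B}$, so compressing by $Q_N=\sum_{m\ge N}p_m$ drives the full off-diagonal block $\norm{\sum_{m\ne k,\,m,k\ge N}p_m b p_k}$ to zero in \emph{operator} norm; the remaining cross blocks $P_N b Q_N$, $Q_N b P_N$ are handled directly since $b\equiv\sum c_n^\eta\pmod{\A{B}}$ and the latter is (asymptotically) $p_n$-block-diagonal, making these small because they reduce to compressions of an element of $\A{B}$ by $Q_N\to 0$. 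This is an approximate-orthogonality estimate rather than a summability one --- and, to be fair, the paper also treats this step somewhat informally (its line ``since $p_m\homo_n^\eta(x)p_k=0$ whenever $m\ne k$'' holds only approximately, since $p_n\ne p_n^\eta$ in general).
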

  \begin{proof}
    Suppose that $(a,b)\in\Gr{\vp}$.  Clearly $\quo{b} = \quo{pbp}$.  Find some $\eta\in \NN^\NN$ such that $\quo{a} = \quo{\theta^\eta(a)}$.  Then $\quo{b} = \quo{\alpha^\eta(\theta^\eta(a))}$, so 
    \[
      \lim_{n\to\infty} \norm{\left(\sum_{m\ge n} p_m\right) \left(pbp  - \sum_m \alpha^\eta_m(\theta_{m,\eta(m)}(\proj{m}{a}))\right)\left(\sum_{m \ge n} p_m\right)} = 0
    \]
    Since $p_m \alpha^\eta_n(x) p_k = 0$ whenever $m \neq k$, it follows that
    \[
      \lim_{n\to\infty} \sup_{m\neq k,\, m,k\ge n} \norm{ p_m b p_k} = 0
    \]
    and this implies $\pi(b) = \pi(\sum p_n b p_n)$.  Now fix $k\in\NN$; since $X_k$ is $<^*$-cofinal in $\NN^\NN$ we may choose $\eta$ as above with $\eta\in X_k$.  Then, for large enough $m\ge n_k$, we have for all $n\ge m$,
    \begin{align}
      \label{close.dom} \norm{\proj{n}{a} - \theta_{n,\eta(n)}(\proj{n}{a})} & \le \e_k \\
      \label{close.ran} \norm{p_n b p_n - \homo_n^\eta(\theta_{n,\eta(n)}(\proj{n}{a}))} & \le \e_k
    \end{align}
    Now fix $n\ge m$ and $i\ge\eta(n)$.  Then,
    \begin{equation}
      \label{close.xieta} \norm{\homo_n^{\xi_{n,i}}(\theta_{n,\eta(n)}(\proj{n}{a})) - \homo_n^\eta(\theta_{n,\eta(n)}(\proj{n}{a}))} \le \e_k
    \end{equation}
    since $\xi_{n,i}$ and $\eta$ are both members of $X_k$.  Finally, note that by~\eqref{close.dom},
    \begin{equation}
      \label{close.ieta} \norm{\theta_{n,i}(\proj{n}{a}) - \theta_{n,\eta(n)}(\proj{n}{a})} \le \e_k
    \end{equation}
    Together the inequalities~\eqref{close.ran},~\eqref{close.xieta}, and~\eqref{close.ieta} imply
    \[
      \norm{p_n b p_n - \homo_n^{\xi_{n,i}}(\theta_{n,i}(\proj{n}{a}))} \le 3\e_k
    \]
    for any $n\ge m$ and $i\ge\eta(n)$, as required.
    
    Now assume that $(*)$ holds.  Fix $k$, and choose $\eta\in X_k$ such that $\quo{a} = \quo{\theta^\eta(a)}$.  By $(*)$ and the $K_1^{\e_k}$-homogeneity of $X_k$, for all large enough $n$ and $i$ we have
    \begin{align}
      \norm{p_n b p_n - \homo_n^{\xi_{n,i}}(\theta_{n,i}(\proj{n}{a}))} & \le \e_k \\
      \norm{\theta_{n,i}(\proj{n}{a}) - \theta_{n,\eta(n)}(\proj{n}{a})} & \le \e_k \\
      \norm{\homo_n^{\xi_{n,i}}(\theta_{n,\eta(n)}(\proj{n}{a})) - \homo_n^\eta(\theta_{n,\eta(n)}(\proj{n}{a}))} & \le \e_k
    \end{align}
    Then,
    \[
      \limsup_{n\to\infty} \norm{p_n b p_n - \homo_n^\eta(\theta_{n,\eta(n)}(\proj{n}{a}))} \le 3\e_k
    \]
    Since $\homo^\eta(\theta^\eta(a))$ is a representative of $\vp(\quo{a})$, it follows that for every $k$,
    \[
      \norm{\quo{\sum p_n b p_n} - \vp(\quo{a})} \le 3\e_k
    \]
    and since $\quo{\sum p_n b p_n} = \quo{b}$, we have $(a,b)\in\Gr{\vp}$.
  \end{proof}
  The claim provides a Borel definition of $\Gr{\vp}$, hence the proof is complete.
\end{proof}

\begin{defn}
  Let $\A{A}$ be a C*-algebra, and suppose $\A{A} = \lim\A{A}_n$.  We say that $\A{A}$ has the \emph{$(\delta,\e)$-intertwining property} with respect to the sequence $\A{A}_n$ ($n\in\NN$) if, for every sequence of $*$-homomorphisms $\homo_n : \A{A}_{2n}\to \A{A}_{2n+1}$, $\gomo_n : \A{A}_{2n+1}\to\A{A}_{2n+2}$  ($n\in\NN$) satisfying, for any $n\le m$,
\[
  \begin{tikzpicture}
    \matrix (m) [cdg.smallmatrix] {
      \A{A}_{2m+1} & & \A{A}_{2m+2} \\
                  & \delta &  \\
      \A{A}_{2n}    & & \A{A}_{2n+1}    \\
    };
    \path [cdg.path]
      (m-1-3) edge node[above]{$\gomo_m$} (m-1-1)
      (m-3-1) edge node[below]{$\homo_n$} (m-3-3)
      (m-3-1) edge (m-1-1)
      (m-3-3) edge (m-1-3);
  \end{tikzpicture}
  \qquad 
  \begin{tikzpicture}
    \matrix (m) [cdg.smallmatrix] {
      \A{A}_{2m+2} & & \A{A}_{2m+3} \\
                  & \delta &  \\
      \A{A}_{2n+2}    & & \A{A}_{2n+1}    \\
    };
    \path [cdg.path]
      (m-1-1) edge node[above]{$\homo_m$} (m-1-3)
      (m-3-3) edge node[below]{$\gomo_n$} (m-3-1)
      (m-3-1) edge (m-1-1)
      (m-3-3) edge (m-1-3);
  \end{tikzpicture}
\]
there are $*$-homomorphisms $\homo,\gomo: \A{A}\to\A{A}$ such that for all $n\in\NN$,
\[
  \begin{tikzpicture}
    \matrix (m) [cdg.smallmatrix] {
      \A{A}       &        & \A{A} \\
                  &   \e   &  \\
      \A{A}_{2n}  &        & \A{A}_{2n+1}    \\
    };
    \path [cdg.path]
      (m-1-3) edge node[above]{$\gomo$} (m-1-1)
      (m-3-1) edge node[below]{$\homo_n$} (m-3-3)
      (m-3-1) edge (m-1-1)
      (m-3-3) edge (m-1-3);
  \end{tikzpicture}
  \qquad 
  \begin{tikzpicture}
    \matrix (m) [cdg.smallmatrix] {
      \A{A}       &        & \A{A} \\
                  &   \e   &  \\
      \A{A}_{2n+2}&        & \A{A}_{2n+1}    \\
    };
    \path [cdg.path]
      (m-1-1) edge node[above]{$\homo$} (m-1-3)
      (m-3-3) edge node[below]{$\gomo_n$} (m-3-1)
      (m-3-1) edge (m-1-1)
      (m-3-3) edge (m-1-3);
  \end{tikzpicture}
\]
\end{defn}

\begin{rmk}
  The diagrams above imply that for any $n < m$ we have
  \[
    \norm{\homo_m\rs\A{A}_{2n} - \homo_n},\norm{\gomo_m\rs\A{A}_{2m+1} - \gomo_n} \le 2\delta
  \]
  Moreover if $\homo,\gomo$ are as in the conclusion, then
  \[
    \norm{\homo\rs\A{A}_{2n} - \homo_n}, \norm{\gomo\rs\A{A}_{2n+1} - \gomo} \le 2\e
  \]
  and
  \[
    \norm{\homo\circ\gomo - \id}, \norm{\gomo\circ\homo - \id} \le 2\e
  \]
\end{rmk}

\begin{prop}
  Let $\A{A}_n$ ($n\in\NN$) be UHF algebras.  Then the following are equivalent.
  \begin{enumerate}
    \item\label{tfae.borel}
    Every automorphism of $\prod\A{A}_n / \bigoplus \A{A}_n$ with Borel graph has a strict, algebraic lift.
    \item\label{tfae.intertwining}
    For every $\e > 0$ there is some $\delta > 0$ such that for all large enough $n$, $\A{A}_n$ has the $(\delta,\e)$-intertwining property with respect to any suitable sequence $\A{A}_{n,k}$ ($k\in\NN$).
  \end{enumerate}
\end{prop}

\begin{proof}
  We first prove that $\lnot\eqref{tfae.intertwining}$ implies $\lnot\eqref{tfae.borel}$.  Fix a sequence $\delta_n$ ($n\in\NN$) of positive reals tending to zero.  Assuming $\lnot\eqref{tfae.intertwining}$, we may construct, by a straightforward recursion, an infinite set $\SN{I}\subseteq\NN$ and for every $n\in\SN{I}$,
  \begin{enumerate}
    \item  a suitable sequence $\A{A}_{n,k}$ ($k\in\NN$) of subalgebras of $\A{A}_n$, and
    \item  $*$-homomorphisms $\homo_{n,k} : \A{A}_{n,2k}\to \A{A}_{n,2k+1}$ and $\gomo_{n,k} : \A{A}_{n,2k+1}\to \A{A}_{n,2k+2}$ 
  \end{enumerate}
  such that for all $k \le \ell$, we have
  \[
    \begin{tikzpicture}
      \matrix (m) [cdg.smallmatrix] {
        \A{A}_{n,2\ell+2} & & \A{A}_{n,2\ell+1} \\
                    & \delta_n &  \\
        \A{A}_{n,2k}    & & \A{A}_{n,2k+1}    \\
      };
      \path [cdg.path]
        (m-1-3) edge node[above]{$\gomo_\ell$} (m-1-1)
        (m-3-1) edge node[below]{$\homo_k$} (m-3-3)
        (m-3-1) edge (m-1-1)
        (m-3-3) edge (m-1-3);
    \end{tikzpicture}
    \qquad 
    \begin{tikzpicture}
      \matrix (m) [cdg.smallmatrix] {
        \A{A}_{n,2\ell+2} & & \A{A}_{n,2\ell+3} \\
                    & \delta_n &  \\
        \A{A}_{n,2k+2}    & & \A{A}_{n,2k+1}    \\
      };
      \path [cdg.path]
        (m-1-1) edge node[above]{$\homo_\ell$} (m-1-3)
        (m-3-3) edge node[below]{$\gomo_k$} (m-3-1)
        (m-3-1) edge (m-1-1)
        (m-3-3) edge (m-1-3);
    \end{tikzpicture}
  \]
  but for any pair of $*$-homomorphisms $\homo,\gomo : \A{A}_n\to \A{A}_n$ there is some $k\in\NN$ such that either $\norm{\gomo\circ\homo_{n,k} - \id} > \e$ or $\norm{\homo\circ \gomo_{n,k} - \id} > \e$, where $\e > 0$ is fixed.  When $n\not\in\SN{I}$ we take any suitable sequence $\A{A}_{n,k}$ and let $\homo_{n,k}$ and $\gomo_{n,k}$ be the inclusion maps $\A{A}_{n,2k}\to\A{A}_{n,2k+1}$ and $\A{A}_{n,2k+1}\to\A{A}_{n,2k+2}$ respectively.  Then the families
  \[
    \homo^\xi_n = \homo_{n,\xi(n)}, \quad \gomo^\xi_n = \gomo_{n,\xi(n)} \qquad (\xi\in\NN^\NN, n\in\NN)
  \]
  are coherent, and hence determine endomorophisms $\vp$ and $\psi$ of $\prod \A{A}_n / \bigoplus \A{A}_n$ respectively.  It is easy to see that $\vp$ and $\psi$ are inverses, so $\vp$ is an automorphism.  Note that, for each $a,b\in \prod (\A{A}_n)_1$, $(a,b)\in\Gamma_\vp$ if and only if
  \begin{align*}
    \exists \xi\in\NN^\NN\;\; \exists x\in(\PA{A}_{2\xi})_1\quad \lim \norm{\proj{n}{a} - \proj{n}{x}} & = 0 \\
      \mbox{ and } \lim \norm{\homo_{n,\xi(n)}(\proj{n}{x}) - \proj{n}{b}} & = 0
  \end{align*}
  if and only if
  \begin{align*}
    \forall \xi\in\NN^\NN\;\; \forall x\in(\PA{A}_{2\xi})_1\quad \mbox{if }\lim \norm{\proj{n}{a} - \proj{n}{x}} & = 0 \\
      \mbox{ then } \lim \norm{\homo_{n,\xi(n)}(\proj{n}{x}) - \proj{n}{b}} & = 0
  \end{align*}
  so $\Gamma_\vp$ is Borel.  Now suppose $\vp$ and $\psi$ have strict algebraic lifts $\homo$ and $\gomo$; let $\homo_n$ and $\gomo_n$ ($n\in\NN$) be the coordinate $*$-homomorphisms for $\homo$ and $\gomo$, respectively.  For each $n\in I$ we may choose some $\xi(n)$ such that $\norm{\homo_n\circ\gomo_{n,\xi(n)} - \id} > \e$ or $\norm{\gomo_n\circ\homo_{n,\xi(n)} - \id} > \e$; it follows that there is some $x\in\prod\A{A}_{n,\xi(n)}$ such that $\psi(\vp(\quo{x})) \neq \quo{x}$ or $\vp(\psi(\quo{x})) \neq \quo{x}$, a contradiction.
  
  Now we show that~\eqref{tfae.intertwining} implies~\eqref{tfae.borel}.  Assume~\eqref{tfae.intertwining} and fix an automorphism $\vp$ of $\prod \A{A}_n / \bigoplus \A{A}_n$ with Borel graph.  Notice that the statement ``$\vp$ has a strict algebraic lift'' is $\PC{\Sigma}^1_2$, and condition~\eqref{tfae.intertwining} is $\PC{\Pi}^1_2$; hence both are absolute between the ground model and any forcing extension, and so we may assume $\TA$ without any loss of generality.    Theorem~\ref{definable->lifts} implies that both $\vp$ and $\vp^{-1}$ are determined by coherent families of $*$-homomorphisms, say
  \[
    \homo^\xi_n : \A{A}_{n,\xi(n)} \to \bigoplus \A{A}_n \quad \gomo^\xi_n : \A{A}_{n,\xi(n)} \to \bigoplus\A{A}_n \qquad (\xi\in\NN^\NN, n\in\NN)
  \]
  respectively.  Let $[\NN^\NN]^2 = K_0^\e\cup K_1^\e$ and $[\NN^\NN]^2 = L_0^\e \cup L_1^\e$ be the colorings defined in the proof of Theorem~\ref{cfh->borel}, given the coherent families $\homo^\xi_n$ and $\gomo^\xi_n$ respectively.  Let $M_0^\e = K_0^\e\cup L_0^\e$; then $M_0^\e$ is open in an appropriate separable metrizable topology, and $\NN^\NN$ has no uncountable, $M_0^\e$-homogeneous subsets, for any $\e > 0$.  Arguing as in Theorem~\ref{cfh->borel}, we may find $*$-homomorphisms
  \[
    \homo_{n,k} : \A{A}_{n,k} \to \bigoplus \A{A}_n \qquad \gomo_{n,k} : \A{A}_{n,k} \to \bigoplus \A{A}_n 
  \]
  such that for some sequence $\delta_n\to 0$ and all $\xi$, we have
  \[
    \norm{\homo^\xi_n - \homo_{n,\xi(n)}}, \norm{\gomo^\xi_n - \gomo_{n,\xi(n)}} \le \delta_n
  \]
  Applying the main result of~\cite{Velickovic.OCAA} to the central automorphism induced by $\vp$, we may find a function $e : \NN\to\NN$ such that $\vp(\quo{\zeta}) = \quo{\zeta\circ e}$ for all central $\zeta$.  Then for all but finitely many $n$, and all $k$, $\homo_{n,k}$ maps into $\A{A}_{e(n)}$, and $\gomo_{n,k}$ maps into $\A{A}_{e^{-1}(n)}$.   Moreover, arguing as in the proof of Corollary~\ref{main.cor}, for all but finitely many $n$ we have that $\A{A}_n\simeq \A{A}_{e(n)}$.  By composing with this isomorphism, or its inverse, we may assume that each $\homo_{n,k}$ and $\gomo_{n,k}$ maps into $\A{A}_n$.  Finally, by perturbing each $\homo_{n,k}$ and $\gomo_{n,k}$ by an amount tending to zero as $n\to\infty$, we may assume that $\homo_{n,k}$ maps into $\A{A}_{n,k'}$, and $\gomo_{n,k}$ into $\A{A}_{n,k'}$, for some large enough $k'$ depending on $k$.  We are now in a situation where we can apply condition~\eqref{tfae.intertwining}; choose $\homo_n : \A{A}_n\to\A{A}_n$ and $\gomo_n : \A{A}_n\to\A{A}_n$ such that
  \[
    \norm{\homo_n\circ \gomo_{n,k} - \id},\norm{\gomo_n\circ \homo_{n,k} - \id} \le \e_n
  \]
  It follows that the sequence $\homo_n$ ($n\in\NN$) determines a strict algebraic lift of $\vp$.
\end{proof}

\bibliography{corona}{}

\providecommand{\bysame}{\leavevmode\hbox to3em{\hrulefill}\thinspace}
\providecommand{\MR}{\relax\ifhmode\unskip\space\fi MR }
\providecommand{\MRhref}[2]{%
  \href{http://www.ams.org/mathscinet-getitem?mr=#1}{#2}
}
\providecommand{\href}[2]{#2}
\begin{thebibliography}{10}

\bibitem{ARS}
Uri Abraham, Matatyahu Rubin, and Saharon Shelah, \emph{On the consistency of
  some partition theorems for continuous colorings, and the structure of
  {$\aleph\sb 1$}-dense real order types}, Ann. Pure Appl. Logic \textbf{29}
  (1985), no.~2, 123--206. \MR{801036 (87d:03132)}

\bibitem{Blackadar.OA}
B.~Blackadar, \emph{Operator algebras}, Encyclopaedia of Mathematical Sciences,
  vol. 122, Springer-Verlag, Berlin, 2006, Theory of $C{\sp{*}}$-algebras and
  von Neumann algebras, Operator Algebras and Non-commutative Geometry, III.
  \MR{2188261 (2006k:46082)}

\bibitem{Blackadar.KT}
Bruce Blackadar, \emph{{$K$}-theory for operator algebras}, second ed.,
  Mathematical Sciences Research Institute Publications, vol.~5, Cambridge
  University Press, Cambridge, 1998. \MR{1656031 (99g:46104)}

\bibitem{Coskey-Farah}
Samuel Coskey and Ilijas Farah, \emph{Automorphisms of corona algebras, and
  group cohomology}, arXiv:1204.4839 [math.OA], April 2012.

\bibitem{Dow-Simon-Vaughan}
Alan Dow, Petr Simon, and Jerry~E. Vaughan, \emph{Strong homology and the
  proper forcing axiom}, Proc. Amer. Math. Soc. \textbf{106} (1989), no.~3,
  821--828. \MR{961403 (90a:55019)}

\bibitem{Elliott.derivations-II}
George~A. Elliott, \emph{Derivations of matroid {$C\sp{\ast} $}-algebras.
  {II}}, Ann. of Math. (2) \textbf{100} (1974), 407--422. \MR{0352999 (50
  \#5485)}

\bibitem{Elliott.intertwining}
\bysame, \emph{On the classification of inductive limits of sequences of
  semisimple finite-dimensional algebras}, J. Algebra \textbf{38} (1976),
  no.~1, 29--44. \MR{0397420 (53 \#1279)}

\bibitem{Farah.AQ}
Ilijas Farah, \emph{Analytic quotients: theory of liftings for quotients over
  analytic ideals on the integers}, Mem. Amer. Math. Soc. \textbf{148} (2000),
  no.~702, xvi+177. \MR{1711328 (2001c:03076)}

\bibitem{Farah.RC}
\bysame, \emph{Rigidity conjectures}, Logic {C}olloquium 2000, Lect. Notes
  Log., vol.~19, Assoc. Symbol. Logic, Urbana, IL, 2005, pp.~252--271.
  \MR{2143881 (2006j:03062)}

\bibitem{Farah.AC}
\bysame, \emph{All automorphisms of all {C}alkin algebras}, Math. Res. Lett.
  \textbf{18} (2011), no.~3, 489--503. \MR{2802582 (2012e:03104)}

\bibitem{Farah.CO}
\bysame, \emph{All automorphisms of the {C}alkin algebra are inner}, Ann. of
  Math. (2) \textbf{173} (2011), no.~2, 619--661. \MR{2776359 (2012e:03103)}

\bibitem{Farah-Katsura.UHFI}
Ilijas Farah and Takeshi Katsura, \emph{Nonseparable {UHF} algebras {I}:
  {D}ixmier's problem}, Adv. Math. \textbf{225} (2010), no.~3, 1399--1430.
  \MR{2673735 (2012a:46095)}

\bibitem{Farah-McKenney.ZD}
Ilijas Farah and Paul McKenney, \emph{Homeomorphisms of {C}ech-stone
  remainders: the zero-dimensional case}, arXiv:1211.4765 [math.LO], November
  2012.

\bibitem{Farah-Shelah.TA}
Ilijas Farah and Saharon Shelah, \emph{Trivial automorphisms}, arXiv:1112.3571
  [math.LO], November 2012.

\bibitem{Glimm.UHF}
James~G. Glimm, \emph{On a certain class of operator algebras}, Trans. Amer.
  Math. Soc. \textbf{95} (1960), 318--340. \MR{0112057 (22 \#2915)}

\bibitem{Jalali-Naini}
S.-A. Jalali-Naini, \emph{The monotone subsets of cantor space, filters, and
  descriptive set theory}, Ph.D. thesis, Oxford, 1976.

\bibitem{Just.EU}
Winfried Just, \emph{Repercussions of a problem of {E}rd{\H o}s and {U}lam on
  density ideals}, Canad. J. Math. \textbf{42} (1990), no.~5, 902--914.
  \MR{1081002 (92f:03061)}

\bibitem{Kechris.CDST}
Alexander~S. Kechris, \emph{Classical descriptive set theory}, Graduate Texts
  in Mathematics, vol. 156, Springer-Verlag, New York, 1995. \MR{1321597
  (96e:03057)}

\bibitem{Kunen.2011}
Kenneth Kunen, \emph{Set theory}, Studies in Logic (London), vol.~34, College
  Publications, London, 2011. \MR{2905394}

\bibitem{Lance.HM}
E.~C. Lance, \emph{Hilbert {$C\sp *$}-modules}, London Mathematical Society
  Lecture Note Series, vol. 210, Cambridge University Press, Cambridge, 1995, A
  toolkit for operator algebraists. \MR{1325694 (96k:46100)}

\bibitem{Loring.LSPP}
Terry~A. Loring, \emph{Lifting solutions to perturbing problems in {$C\sp
  *$}-algebras}, Fields Institute Monographs, vol.~8, American Mathematical
  Society, Providence, RI, 1997. \MR{1420863 (98a:46090)}

\bibitem{Moore.PFA}
Justin~Tatch Moore, \emph{The proper forcing axiom}, Proceedings of the
  {I}nternational {C}ongress of {M}athematicians. {V}olume {II} (New Delhi),
  Hindustan Book Agency, 2010, pp.~3--29. \MR{2827783 (2012g:03139)}

\bibitem{Pedersen.CAAG}
Gert~K. Pedersen, \emph{{$C\sp{\ast} $}-algebras and their automorphism
  groups}, London Mathematical Society Monographs, vol.~14, Academic Press Inc.
  [Harcourt Brace Jovanovich Publishers], London, 1979. \MR{548006 (81e:46037)}

\bibitem{Phillips-Weaver}
N.~Christopher Phillips and Nik Weaver, \emph{The {C}alkin algebra has outer
  automorphisms}, Duke Math. J. \textbf{139} (2007), no.~1, 185--202.
  \MR{2322680 (2009a:46123)}

\bibitem{Rudin}
Walter Rudin, \emph{Homogeneity problems in the theory of {\v{{c}}}ech
  compactifications}, Duke Math. J. \textbf{23} (1956), 409--419. \MR{0080902
  (18,324d)}

\bibitem{Shelah-Steprans.PFAA}
Saharon Shelah and Juris Stepr{\=a}ns, \emph{P{FA} implies all automorphisms
  are trivial}, Proc. Amer. Math. Soc. \textbf{104} (1988), no.~4, 1220--1225.
  \MR{935111 (89e:03080)}

\bibitem{Talagrand.Submeasure}
Michel Talagrand, \emph{A simple example of pathological submeasure}, Math.
  Ann. \textbf{252} (1979/80), no.~2, 97--102. \MR{593624 (81k:28005)}

\bibitem{Todorcevic.PPIT}
Stevo Todor{\v{c}}evi{\'c}, \emph{Partition problems in topology}, Contemporary
  Mathematics, vol.~84, American Mathematical Society, Providence, RI, 1989.
  \MR{980949 (90d:04001)}

\bibitem{Todorcevic.PID}
\bysame, \emph{A dichotomy for {P}-ideals of countable sets}, Fund. Math.
  \textbf{166} (2000), no.~3, 251--267. \MR{1809418 (2001k:03111)}

\bibitem{Velickovic.OCAA}
Boban Veli{\v{c}}kovi{\'c}, \emph{{$\textrm{OCA}$} and automorphisms of
  {$\mathscr{P}(\omega)/\textrm{fin}$}}, Topology Appl. \textbf{49} (1993),
  no.~1, 1--13. \MR{1202874 (94a:03080)}

\end{thebibliography}
\bibliographystyle{amsplain}

\end{document}